\let\oldtocsection=\tocsection
\let\oldtocsubsection=\tocsubsection 
\let\oldtocsubsubsection=\tocsubsubsection
\renewcommand{\tocsection}[2]{\vspace{0.5em}\hspace{0em}\oldtocsection{#1}{#2}}
\renewcommand{\tocsubsection}[2]{\vspace{0.5em}\hspace{1em}\oldtocsubsection{#1}{#2}}
\renewcommand{\tocsubsubsection}[2]{\vspace{0.5em}\hspace{2em}\oldtocsubsubsection{#1}{#2}}
\newtheorem{theoreme}{Theorem}[section]
\newtheorem{pro}[theoreme]{Proposition}
\newtheorem{lemma}[theoreme]{Lemma}
\newtheorem{definition}[theoreme]{Definition}
\theoremstyle{definition}
\numberwithin{equation}{section}
 \renewenvironment{proof}{{\bfseries \noindent Proof.}}{\demo}
\newcommand\xqed[1]{%
  \leavevmode\unskip\penalty9999 \hbox{}\nobreak\hfill
  \quad\hbox{#1}}
\newcommand\demo{\xqed{$\square$}}
\def\u2{\u^2}
\def\u3{\u^3}
\def\u4{\u^4}
\def\u5{\u^5}
\def\y1{\y^1}
\def\y2{\y^2}
\def\y3{\y^3}
\def\y4{\y^4}
\def\y5{\y^5}
\def\R{\mathbb R}
\def\HH{\mathcal H}
\def\AA{\mathcal A}
\def\la {{\lambda}}
\newcommand {\nc}   {\newcommand}
\nc {\be}   {\begin{equation}} \nc {\ee}   {\end{equation}} \nc
\nc {\eeq}  {\end{eqnarray}} \nc {\beqs}
\nc {\eeqs} {\end{eqnarray*}}
\def\edc{\end{document}}
\providecommand{\abs}[1]{\lvert#1\rvert}
\DeclareMathOperator{\divv}{div}  
\DeclareMathOperator{\meas}{meas}
\begin{document}
\title[\fontsize{7}{9}\selectfont  ]{Stability results of locally coupled wave equations with local Kelvin-Voigt damping: Cases when the supports of damping and coupling coefficients are disjoint}
\author{Mohammad Akil$^{1}$, Haidar Badawi$^{1}$, and Serge Nicaise$^1$}

\address{$^1$ Universit\'e Polytechnique Hauts-de-France, CERAMATHS/DEMAV,
	Valenciennes, France}
\email{Mohammad.Akil@uphf.fr, Haidar.Badawi@uphf.fr, Serge.Nicaise@uphf.fr}
\keywords{Coupled wave equations, Kelvin-Voigt damping, strong stability, polynomial stability }


\setcounter{equation}{0}
\begin{abstract}
In this paper, we study the direct/indirect stability of locally coupled wave equations with local Kelvin-Voigt dampings/damping and by assuming that the supports of the dampings and the coupling coefficients are disjoint. First, we prove the well-posedness, strong stability, and polynomial stability for some one dimensional coupled systems. Moreover, under some geometric control condition, we prove  the well-posedness and strong stability in the multi-dimensional case.

\end{abstract}
\maketitle
\pagenumbering{roman}
\maketitle
\tableofcontents
\pagenumbering{arabic}
\setcounter{page}{1}
\section{Introduction} 
\noindent  
The direct and indirect stability of locally coupled wave equations with local damping arouses many interests in recent years. The study of coupled systems is also motivated by several physical considerations like Timoshenko and Bresse systems (see for instance \cite{BASSAM20151177,Akil2020,Akil2021,ABBresse,Wehbe08,Fatori01,FATORI2012600}).
 The exponential or polynomial stability of the wave equation with a local Kelvin-Voigt damping is considered in \cite{Liu-Rao:06,Tebou:16,BurqSun:22}, for instance. On the other hand, the direct and indirect stability of locally and   coupled wave equations with local viscous dampings are analyzed in \cite{Alabau-Leautaud:13,Kassemetal:19,Gerbietal:21}. In this paper, we are interested in locally coupled wave equations with  local Kelvin-Voigt dampings. Before stating our main contributions, let us mention similar results for such systems. In 2019, Hayek {\it et al.} in \cite{Hayek}, studied the stabilization of a multi-dimensional system
of weakly coupled wave equations with one or two locally Kelvin-Voigt damping and
non-smooth coefficient at the interface. They established different stability results. In 2021, Akil {\it et al.} in \cite{Wehbe2021}, studied the stability of an elastic/viscoelastic transmission problem of locally coupled waves with non-smooth coefficients, by considering:
\begin{equation*} \left\{	\begin{array}{llll}\vspace{0.15cm}
	\displaystyle  	u_{tt}-\left(au_x +{\color{black}b_0 \chi_{(\alpha_1,\alpha_3)}} {\color{black}u_{tx}}\right)_x +{\color{black}c_0 \chi_{(\alpha_2,\alpha_4)}}y_t =0,& \text{in}\ (0,L)\times (0,\infty) ,&\\ \vspace{0.15cm}
		y_{tt}-y_{xx}-{\color{black}c_0 \chi_{(\alpha_2,\alpha_4)}}u_t =0,  &\text{in} \  (0,L)\times (0,\infty) ,&\\\vspace{0.15cm}
		u(0,t)=u(L,t)=y(0,t)=y(L,t)=0,& \text{in} \ (0,\infty) ,&
	\end{array}\right.
\end{equation*}
where $a, b_0, L >0$, $c_0 \neq 0$, and $0<\alpha_1<\alpha_2<\alpha_3<\alpha_4<L$. They established a polynomial energy decay rate of type $t^{-1}$. In the same year, Akil {\it et al.} in \cite{ABWdelay}, studied the stability of a singular local interaction elastic/viscoelastic coupled wave equations with time delay, by considering:
\begin{equation*} \left\{	\begin{array}{llll}\vspace{0.15cm}
		\displaystyle  	u_{tt}-\left[au_x +{\color{black} \chi_{(0,\beta)}}(\kappa_1 {\color{black}u_{tx}}+\kappa_2 u_{tx}(t-\tau))\right]_x +{\color{black}c_0 \chi_{(\alpha,\gamma)}}y_t =0,& \text{in}\ (0,L)\times (0,\infty) ,&\\ \vspace{0.15cm}
		y_{tt}-y_{xx}-{\color{black}c_0 \chi_{(\alpha,\gamma)}}u_t =0,  &\text{in} \  (0,L)\times (0,\infty) ,&\\\vspace{0.15cm}
		u(0,t)=u(L,t)=y(0,t)=y(L,t)=0,& \text{in} \ (0,\infty) ,&
	\end{array}\right.
\end{equation*}
where $a, \kappa_1, L>0$, $\kappa_2, c_0 \neq 0$, and $0<\alpha <\beta <\gamma <L$. They proved that the energy of their system decays polynomially in $t^{-1}$. In 2021, Akil {\it et al.} in \cite{ABNWmemory}, studied the stability of coupled wave models with locally memory in a past history framework via non-smooth coefficients on the interface, by considering:
\begin{equation*} \left\{	\begin{array}{llll}\vspace{0.15cm}
		\displaystyle  	u_{tt}-\left(au_x +{\color{black} b_0 \chi_{(0,\beta)}} {\color{black}\int_0^{\infty}g(s)u_{x}(t-s)ds}\right)_x +{\color{black}c_0 \chi_{(\alpha,\gamma)}}y_t =0,& \text{in}\ (0,L)\times (0,\infty) ,&\\ \vspace{0.15cm}
		y_{tt}-y_{xx}-{\color{black}c_0 \chi_{(\alpha,\gamma)}}u_t =0,  &\text{in} \  (0,L)\times (0,\infty) ,&\\\vspace{0.15cm}
		u(0,t)=u(L,t)=y(0,t)=y(L,t)=0,& \text{in} \ (0,\infty) ,&
	\end{array}\right.
\end{equation*}
where $a, b_0, L >0$, $c_0 \neq 0$, $0<\alpha<\beta<\gamma<L$, and $g:[0,\infty) \longmapsto (0,\infty)$ is the convolution kernel function. They established an exponential energy decay rate if the two waves have the same speed of propagation. In case of different speed of propagation, they proved that the energy of their system decays polynomially with rate $t^{-1}$. In the same year, Akil {\it et al.} in \cite{akil2021ndimensional}, studied the stability of a multi-dimensional elastic/viscoelastic transmission problem with Kelvin-Voigt damping and non-smooth coefficient at the interface, they established some polynomial stability results under some geometric control condition. In those previous literature, the authors deal with the locally coupled wave equations with local damping and by assuming that there is an intersection between the damping and coupling regions. The aim of this paper is to study the direct/indirect stability of locally coupled wave equations with Kelvin-Voigt dampings/damping localized via non-smooth coefficients/coefficient  and by assuming that the supports of the dampings and coupling coefficients are disjoint. In the first part of this paper, we consider the following one dimensional coupled system:
\begin{eqnarray}
u_{tt}-\left(au_x+bu_{tx}\right)_x+c y_t&=&0,\quad (x,t)\in (0,L)\times (0,\infty),\label{eq1}\\ 
y_{tt}-\left(y_x+dy_{tx}\right)_x-cu_t&=&0,\quad (x,t)\in (0,L)\times (0,\infty),\label{eq2}
\end{eqnarray}
with fully Dirichlet boundary conditions, 
\begin{equation}\label{eq3}
u(0,t)=u(L,t)=y(0,t)=y(L,t)=0,\  t\in (0,\infty),
\end{equation}
and the following initial conditions
\begin{equation}\label{eq4}
	u(\cdot,0)=u_0(\cdot),\ u_t(\cdot,0)=u_1(\cdot),\ y(\cdot,0)=y_0(\cdot)\quad \text{and}\quad y_t(\cdot,0)=y_1(\cdot), \ x \in (0,L).
\end{equation}
In this part, for all $b_0, d_0 >0$ and $c_0 \neq 0$, we treat the following three cases:\\[0.1in]
\textbf{Case 1 (See Figure \ref{p7-Fig1}):}
\begin{equation}\tag{${\rm C1}$}\label{C1}
\left\{\begin{array}{l}
b(x)=b_0 \chi_{(b_1,b_2)}(x)
,\ \quad c(x)=c_0\chi_{(c_1,c_2)}(x),\ \quad 
d(x)=d_0\chi_{(d_1,d_2)}(x),\\[0.1in]
\text{where}\  0<b_1<b_2<c_1<c_2<d_1<d_2<L.	
\end{array}
\right.
\end{equation}
\textbf{Case 2 (See Figure \ref{p7-Fig2}):}
\begin{equation}\tag{${\rm C2}$}\label{C2}
\left\{\begin{array}{l}
b(x)=b_0 \chi_{(b_1,b_2)}(x)
,\ \quad c(x)=c_0\chi_{(c_1,c_2)}(x),\ \quad 
d(x)=d_0\chi_{(d_1,d_2)}(x),\\[0.1in]
\text{where}\ 0<b_1<b_2<d_1<d_2<c_1<c_2<L.
\end{array}
\right.
\end{equation}
\textbf{Case 3 (See Figure \ref{p7-Fig3}):}
\begin{equation}\tag{${\rm C3}$}\label{C3}
\left\{\begin{array}{l}
b(x)=b_0 \chi_{(b_1,b_2)}(x)
,\ \quad c(x)=c_0\chi_{(c_1,c_2)}(x),\ \quad 
d(x)=0,\\[0.1in]
\text{where}\ 0<b_1<b_2<c_1<c_2<L. 
\end{array}
\right.
\end{equation}
\begin{figure}[h!]
\begin{center}
\begin{tikzpicture}[scale=0.8]
\draw[->](0,0)--(8,0);
\draw[->](0,0)--(0,4);
\draw[dashed](1,0)--(1,1);
\draw[dashed](2,0)--(2,1);
\draw[dashed](3,0)--(3,2);
\draw[dashed](4,0)--(4,2);
\draw[dashed](5,0)--(5,3);
\draw[dashed](6,0)--(6,3);
\node[black,below] at (1,0){\scalebox{0.75}{$b_1$}};
\node at (1,0) [circle, scale=0.3, draw=black!80,fill=black!80] {};
\node[black,below] at (2,0){\scalebox{0.75}{$b_2$}};
\node at (2,0) [circle, scale=0.3, draw=black!80,fill=black!80] {};
\node[black,below] at (3,0){\scalebox{0.75}{$c_1$}};
\node at (3,0) [circle, scale=0.3, draw=black!80,fill=black!80] {};
\node[black,below] at (4,0){\scalebox{0.75}{$c_2$}};
\node at (4,0) [circle, scale=0.3, draw=black!80,fill=black!80] {};
\node[black,below] at (5,0){\scalebox{0.75}{$d_1$}};
\node at (5,0) [circle, scale=0.3, draw=black!80,fill=black!80] {};
\node[black,below] at (6,0){\scalebox{0.75}{$d_2$}};
\node at (6,0) [circle, scale=0.3, draw=black!80,fill=black!80] {};
\node[black,below] at (7,0){\scalebox{0.75}{$L$}};
\node at (7,0) [circle, scale=0.3, draw=black!80,fill=black!80] {};
\node[red,left] at (0,1){\scalebox{0.75}{$b_0$}};
\node at (0,1) [circle, scale=0.3, draw=black!80,fill=black!80] {};
\node[blue,left] at (0,2){\scalebox{0.75}{$c_0$}};
\node at (0,2) [circle, scale=0.3, draw=black!80,fill=black!80] {};

\node[black,below] at (0,0){\scalebox{0.75}{$0$}};
\node at (0,0) [circle, scale=0.3, draw=black!80,fill=black!80] {};
\node[green,left] at (0,3){\scalebox{0.75}{$d_0$}};
\node at (0,3) [circle, scale=0.3, draw=black!80,fill=black!80] {};
\draw[-,red](1,1)--(2,1);
\draw[-,blue](3,2)--(4,2);
\draw[-,green](5,3)--(6,3);
\end{tikzpicture}
\end{center}
\caption{Geometric description of the functions $b, c$ and $d$ in Case 1.}\label{p7-Fig1}
\end{figure}
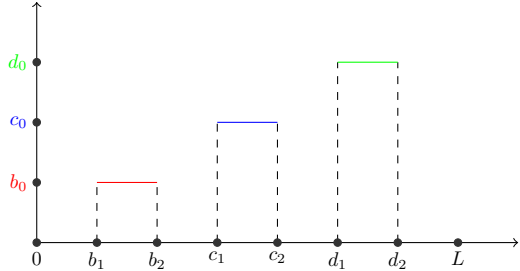
\begin{figure}[h!]
\begin{center}
\begin{tikzpicture}[scale=0.8]
\draw[->](0,0)--(8,0);
\draw[->](0,0)--(0,4);
\draw[dashed](1,0)--(1,1);
\draw[dashed](2,0)--(2,1);
\draw[dashed](3,0)--(3,2);
\draw[dashed](4,0)--(4,2);
\draw[dashed](5,0)--(5,3);
\draw[dashed](6,0)--(6,3);
\node[black,below] at (1,0){\scalebox{0.75}{$b_1$}};
\node at (1,0) [circle, scale=0.3, draw=black!80,fill=black!80] {};
\node[black,below] at (2,0){\scalebox{0.75}{$b_2$}};
\node at (2,0) [circle, scale=0.3, draw=black!80,fill=black!80] {};

\node[black,below] at (0,0){\scalebox{0.75}{$0$}};
\node at (0,0) [circle, scale=0.3, draw=black!80,fill=black!80] {};
\node[black,below] at (3,0){\scalebox{0.75}{$d_1$}};
\node at (3,0) [circle, scale=0.3, draw=black!80,fill=black!80] {};
\node[black,below] at (4,0){\scalebox{0.75}{$d_2$}};
\node at (4,0) [circle, scale=0.3, draw=black!80,fill=black!80] {};
\node[black,below] at (5,0){\scalebox{0.75}{$c_1$}};
\node at (5,0) [circle, scale=0.3, draw=black!80,fill=black!80] {};
\node[black,below] at (6,0){\scalebox{0.75}{$c_2$}};
\node at (6,0) [circle, scale=0.3, draw=black!80,fill=black!80] {};
\node[black,below] at (7,0){\scalebox{0.75}{$L$}};
\node at (7,0) [circle, scale=0.3, draw=black!80,fill=black!80] {};
\node[red,left] at (0,1){\scalebox{0.75}{$b_0$}};
\node at (0,1) [circle, scale=0.3, draw=black!80,fill=black!80] {};
\node[green,left] at (0,2){\scalebox{0.75}{$d_0$}};
\node at (0,2) [circle, scale=0.3, draw=black!80,fill=black!80] {};
\node[blue,left] at (0,3){\scalebox{0.75}{$c_0$}};
\node at (0,3) [circle, scale=0.3, draw=black!80,fill=black!80] {};
\draw[-,red](1,1)--(2,1);
\draw[-,green](3,2)--(4,2);
\draw[-,blue](5,3)--(6,3);
\end{tikzpicture}
\end{center}
\caption{Geometric description of the functions $b,c$ and $d$ in Case 2.}\label{p7-Fig2}
\end{figure}
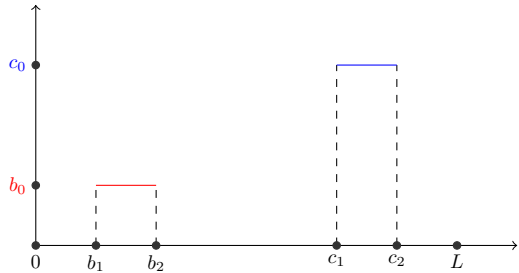
\begin{figure}[h!]
\begin{center}
\begin{tikzpicture}[scale=0.8]
\draw[->](0,0)--(8,0);
\draw[->](0,0)--(0,4);
\draw[dashed](1,0)--(1,1);
\draw[dashed](2,0)--(2,1);
\draw[dashed](5,0)--(5,3);
\draw[dashed](6,0)--(6,3);

\node[black,below] at (0,0){\scalebox{0.75}{$0$}};
\node at (0,0) [circle, scale=0.3, draw=black!80,fill=black!80] {};

\node[black,below] at (1,0){\scalebox{0.75}{$b_1$}};
\node at (1,0) [circle, scale=0.3, draw=black!80,fill=black!80] {};
\node[black,below] at (2,0){\scalebox{0.75}{$b_2$}};
\node at (2,0) [circle, scale=0.3, draw=black!80,fill=black!80] {};
\node[black,below] at (5,0){\scalebox{0.75}{$c_1$}};
\node at (5,0) [circle, scale=0.3, draw=black!80,fill=black!80] {};
\node[black,below] at (6,0){\scalebox{0.75}{$c_2$}};
\node at (6,0) [circle, scale=0.3, draw=black!80,fill=black!80] {};
\node[black,below] at (7,0){\scalebox{0.75}{$L$}};
\node at (7,0) [circle, scale=0.3, draw=black!80,fill=black!80] {};
\node[red,left] at (0,1){\scalebox{0.75}{$b_0$}};
\node at (0,1) [circle, scale=0.3, draw=black!80,fill=black!80] {};
\node[blue,left] at (0,3){\scalebox{0.75}{$c_0$}};
\node at (0,3) [circle, scale=0.3, draw=black!80,fill=black!80] {};
\draw[-,red](1,1)--(2,1);
\draw[-,blue](5,3)--(6,3);
\end{tikzpicture}
\end{center}
\caption{Geometric description of the functions $b$ and $c$ in Case 3.}\label{p7-Fig3}
\end{figure}
\noindent While in the second part, we consider the following multi-dimensional coupled system:
\begin{eqnarray}\label{ND-1}
u_{tt}-\divv (\nabla u+bu_t)+cy_t&=&0\quad \text{in}\ \Omega\times (0,\infty),\\
y_{tt}-\Delta y-cy_t&=&0\quad \text{in}\ \Omega\times (0,\infty),
\end{eqnarray}
with full Dirichlet boundary condition 
\begin{equation}\label{ND-2}
u=y=0\quad \text{on}\quad \Gamma\times (0,\infty),
\end{equation}
and the following initial condition 
\begin{equation}\label{ND-5}
	u(\cdot,0)=u_0(\cdot),\ u_t(\cdot,0)=u_1(\cdot),\ y(\cdot,0)=y_0(\cdot)\ \text{and}\ y_t(\cdot,0)=y_1(\cdot) \ \text{in} \ \Omega,
\end{equation}
where $\Omega \subset  \R^d$, $d\geq 2$ is an open and bounded set with   boundary $\Gamma$ of class $C^2$. Here, $b,c\in L^{\infty}(\Omega)$ are such that $b:\Omega\to \R_+$ is the viscoelastic damping coefficient, $c:\Omega\to \R$ is the coupling function and 
\begin{equation}\label{ND-3}
b(x)\geq b_0>0\ \ \text{in}\ \ \omega_b\subset \Omega, \quad c(x)\geq c_0\neq 0\ \ \text{in}\ \ \omega_c\subset \Omega\quad \text{and}\quad c(x)=0\ \ \text{on}\ \ \Omega\backslash \omega_c
\end{equation}
and 
\begin{equation}\label{ND-4}
\meas\left(\overline{\omega_c}\cap \Gamma\right)>0\quad \text{and}\quad \overline{\omega_b}\cap \overline{\omega_c}=\emptyset.
\end{equation}\\\linebreak
 In the first part of this paper, we study the direct and indirect stability of system \eqref{eq1}-\eqref{eq4} by considering   the three cases \eqref{C1}, \eqref{C2}, and \eqref{C3}. In Subsection \ref{WP}, we prove the well-posedness of our system by using a semigroup approach. In Subsection \ref{subss}, by using a general criteria of Arendt-Batty, we prove the strong stability of our system in the absence of the compactness of the resolvent. Finally, in Subsection \ref{secps}, by using a frequency domain approach combined with a specific multiplier method, we prove that our system decay polynomially in $t^{-4}$ or in $t^{-1}$.\\\linebreak 
In the second part of this paper, we study the indirect stability of system \eqref{ND-1}-\eqref{ND-5}. In Subsection \ref{wpnd}, we prove the well-posedness of our system by using a semigroup approach. Finally, in Subsection \ref{Strong Stability-ND}, under some geometric control condition, we prove the strong stability of this system.

\section{Direct and Indirect Stability in the one dimensional case}\label{sec1}
In this section, we study the well-posedness, strong stability, and polynomial stability of system \eqref{eq1}-\eqref{eq4}. The main result of this section are the following three subsections.
\subsection{Well-Posedness}\label{WP}
\noindent In this subsection, we will establish the well-posedness of system \eqref{eq1}-\eqref{eq4} by using  semigroup approach. The energy of system \eqref{eq1}-\eqref{eq4} is given by
	\begin{equation*}
	E(t)=\frac{1}{2}\int_0^L \left(|u_t|^2+a|u_x|^2+|y_t|^2+|y_x|^2\right)dx.
	\end{equation*}
Let $\left(u,u_{t},y,y_{t}\right)$ be a regular solution of \eqref{eq1}-\eqref{eq4}. Multiplying  \eqref{eq1} and \eqref{eq2}    by $\overline{u_t}$ and $\overline{y_t}$  respectively, then  using  the boundary conditions \eqref{eq3}, we get
	\begin{equation*}
	E^\prime(t)=- \int_0^L \left(b|u_{tx}|^2+d|y_{tx}|^2\right)dx.
	\end{equation*}
Thus, if \eqref{C1} or \eqref{C2} or \eqref{C3} holds, we get 	$E^\prime(t)\leq0$. Therefore, system \eqref{eq1}-\eqref{eq4} is dissipative in the sense that its energy is non-increasing with respect to time $t$. Let us define the energy space $\mathcal{H}$ by
	\begin{equation*}
	\mathcal{H}=(H_0^1(0,L)\times L^2(0,L))^2.
	\end{equation*}

\noindent The energy space $\mathcal{H}$ is equipped with the following inner product
	$$
	\left(U,U_1\right)_\mathcal{H}=\int_{0}^Lv\overline{{v}}_1dx+a\int_{0}^Lu_x(\overline{{u}}_1)_xdx+\int_{0}^Lz\overline{{z}}_1dx+\int_{0}^Ly_x(\overline{{y}}_1)_xdx,
	$$
for  all  $U=\left(u,v,y,z\right)^\top$ and $U_1=\left(u_1,v_1,y_1,z_1\right)^\top$ in $\mathcal{H}$.
We define the unbounded linear operator $\mathcal{A}: D\left(\mathcal{A}\right)\subset \mathcal{H}\longrightarrow \mathcal{H}$  by 
\begin{equation*}
D(\mathcal{A})=\left\{\begin{array}{l}
\displaystyle
U=(u,v,y,z)^\top \in\mathcal{H};\ v,z\in H_0^1(0,L), \ (au_{x}+bv_{x})_{x}\in L^2(0,L), \ (y_{x}+dz_x)_x\in L^2(0,L)
\end{array}\right\}
\end{equation*}
and 
$$
\mathcal{A}\left(u, v,y, z\right)^\top=\left(v,(au_{x}+bv_{x})_{x}-cz, z, (y_x+dz_x)_x+cv \right)^{\top}, \ \forall U=\left(u, v,y, z\right)^\top \in D\left(\mathcal{A}\right).
$$
\noindent Now, if $U=(u,u_t,y,y_t)^\top$ is the state  of system \eqref{eq1}-\eqref{eq4}, then it is transformed into the following first order evolution equation 
\begin{equation}\label{eq-2.9}
U_t=\mathcal{A}U,\quad
U(0)=U_0,
\end{equation}
where $U_0=(u_0,u_1,y_0,y_1)^\top \in \HH$.\\\linebreak
\begin{pro}\label{mdissipative}
	{\rm
If \eqref{C1} or \eqref{C2} or \eqref{C3} holds. Then, the unbounded linear operator $\AA$ is m-dissipative in the Hilbert space $\HH$.}
\end{pro}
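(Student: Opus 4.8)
The plan is to apply the Lumer--Phillips theorem: it suffices to check that $\AA$ is dissipative, that $D(\AA)$ is dense in $\HH$ (which is immediate, since $D(\AA)$ contains all $U=(u,v,y,z)^\top$ with $u,y\in H^2(0,L)\cap H_0^1(0,L)$ and $v,z\in H_0^1(0,L)$, a dense subspace of $\HH$), and that $I-\AA$ is onto.

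\textbf{Dissipativity.} Let $U=(u,v,y,z)^\top\in D(\AA)$. Using the definitions of $\AA$ and of $(\cdot,\cdot)_\HH$,
\[
(\AA U,U)_\HH=\int_0^L\big((au_x+bv_x)_x-cz\big)\overline v\,dx+a\int_0^L v_x\overline{u_x}\,dx+\int_0^L\big((y_x+dz_x)_x+cv\big)\overline z\,dx+\int_0^L z_x\overline{y_x}\,dx.
\]
Integrating the second-order terms by parts and using $v,z\in H_0^1(0,L)$ to discard the boundary contributions, the quantities $a\int_0^L u_x\overline{v_x}$ and $a\int_0^L v_x\overline{u_x}$ are complex conjugates (hence cancel in the real part), and likewise $-\int_0^L cz\overline v$ and $\int_0^L cv\overline z$ are complex conjugates because $c$ is real-valued; therefore
\[
\mathrm{Re}\,(\AA U,U)_\HH=-\int_0^L b\,|v_x|^2\,dx-\int_0^L d\,|z_x|^2\,dx\le0,
\]
since $b\ge0$ and $d\ge0$ in each of \eqref{C1}, \eqref{C2}, \eqref{C3}. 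Thus $\AA$ is dissipative.

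\textbf{Maximality.} Given $F=(f_1,f_2,f_3,f_4)^\top\in\HH$, I look for $U=(u,v,y,z)^\top\in D(\AA)$ with $(I-\AA)U=F$. The first and third equations force $v=u-f_1$ and $z=y-f_3$, both in $H_0^1(0,L)$; substituting into the remaining two equations reduces the problem to finding $(u,y)\in H_0^1(0,L)\times H_0^1(0,L)$ with $\mathcal B\big((u,y),(\varphi,\psi)\big)=\ell(\varphi,\psi)$ for all $(\varphi,\psi)\in H_0^1(0,L)\times H_0^1(0,L)$, where
\[
\mathcal B\big((u,y),(\varphi,\psi)\big)=\int_0^L u\overline\varphi+\int_0^L(a+b)u_x\overline{\varphi_x}+\int_0^L c\,y\overline\varphi+\int_0^L y\overline\psi+\int_0^L(1+d)y_x\overline{\psi_x}-\int_0^L c\,u\overline\psi,
\]
and $\ell$ is the antilinear form assembled from $F$ (the contributions $(bf_{1,x})_x$ and $(df_{3,x})_x$ being read in the distributional sense, which is precisely why a variational formulation is convenient). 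Continuity of $\mathcal B$ and of $\ell$ on $H_0^1(0,L)\times H_0^1(0,L)$ follows at once from $a,b,c,d\in L^\infty(0,L)$. For coercivity, the two coupling terms $\int_0^L c\,y\overline u$ and $-\int_0^L c\,u\overline y$ are complex conjugates (again because $c$ is real), so they cancel in $\mathrm{Re}\,\mathcal B\big((u,y),(u,y)\big)=\int_0^L|u|^2+\int_0^L(a+b)|u_x|^2+\int_0^L|y|^2+\int_0^L(1+d)|y_x|^2$, which is bounded below by $C\big(\|u\|_{H_0^1(0,L)}^2+\|y\|_{H_0^1(0,L)}^2\big)$ thanks to Poincar\'e's inequality and $a>0$, $b,d\ge0$. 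Lax--Milgram then provides $(u,y)$; setting $v=u-f_1$ and $z=y-f_3$ and reading the equations backwards yields $(au_x+bv_x)_x=v+cz-f_2\in L^2(0,L)$ and $(y_x+dz_x)_x=z-cv-f_4\in L^2(0,L)$, so $U=(u,v,y,z)^\top\in D(\AA)$ and $(I-\AA)U=F$. Hence $I-\AA$ is surjective, and by Lumer--Phillips $\AA$ is m-dissipative.

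\textbf{Expected obstacle.} There is no deep difficulty; the only point requiring care is the coercivity of $\mathcal B$, since the coupling renders it non-symmetric. This works out because $c$ is real-valued, so the coupling terms affect only the imaginary part of $\mathcal B\big((u,y),(u,y)\big)$ and Lax--Milgram still applies. Everything else is routine integration by parts together with the Poincar\'e inequality.
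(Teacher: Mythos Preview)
Your proof is correct and follows essentially the same strategy as the paper: verify dissipativity by direct computation and obtain the range condition via a Lax--Milgram argument. The only cosmetic difference is that the paper (citing \cite{Wehbe2021}) solves $-\mathcal{A}U=F$ to show $0\in\rho(\mathcal{A})$ and then invokes openness of the resolvent set, whereas you solve $(I-\mathcal{A})U=F$ directly; the paper's route has the incidental advantage of establishing $0\in\rho(\mathcal{A})$, which is reused later in the proof of Lemma~\ref{ker-SS123}.
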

\begin{proof}
 For all $U=(u,v,y,z)^{\top}\in D(\mathcal{A})$, we have 
\begin{equation*}
\Re\left<\mathcal{A}U,U\right>_{\mathcal{H}}=-\int_0^Lb\abs{v_x}^2dx-\int_0^Ld\abs{z_x}^2dx\leq 0,
\end{equation*}
which implies that $\mathcal{A}$ is dissipative. Now, similiar to Proposition 2.1 in \cite{Wehbe2021} (see also \cite{ABWdelay} and \cite{ABNWmemory}), we can prove that there exists a unique solution  $U=(u,v,y,z)^{\top}\in D(\mathcal{A})$ of 
\begin{equation*}
-\mathcal{A}U=F,\quad \forall F=(f^1,f^2,f^3,f^4)^\top\in \mathcal{H}.
\end{equation*}
Then $0\in \rho(\mathcal{A})$ and $\mathcal{A}$ is an isomorphism and since $\rho(\mathcal{A})$ is open in $\mathbb{C}$ (see Theorem 6.7 (Chapter III) in \cite{Kato01}),  we easily get $R(\lambda I -\mathcal{A}) = {\mathcal{H}}$ for a sufficiently small $\lambda>0 $. This, together with the dissipativeness of $\mathcal{A}$, imply that   $D\left(\mathcal{A}\right)$ is dense in ${\mathcal{H}}$   and that $\mathcal{A}$ is m-dissipative in ${\mathcal{H}}$ (see Theorems 4.5, 4.6 in  \cite{Pazy01}).
\end{proof}\\\linebreak
According to Lumer-Phillips theorem (see \cite{Pazy01}), then the operator $\AA$ generates a $C_{0}$-semigroup of contractions $e^{t\AA}$ in $\HH$ which gives the well-posedness of \eqref{eq-2.9}.
 Then, we have the following result:
\begin{theoreme}{\rm
For all $U_0 \in \HH$,  system \eqref{eq-2.9} admits a unique weak solution $$U(t)=e^{t\AA}U_0\in C^0 (\R_+ ,\HH).
	$$ Moreover, if $U_0 \in D(\AA)$, then the system \eqref{eq-2.9} admits a unique strong solution $$U(t)=e^{t\AA}U_0\in C^0 (\R_+ ,D(\AA))\cap C^1 (\R_+ ,\HH).$$}
\end{theoreme}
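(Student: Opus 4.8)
The plan is to obtain the theorem as an immediate consequence of Proposition \ref{mdissipative} together with the Lumer--Phillips generation theorem and the standard regularity theory for $C_0$-semigroups, so almost all of the work has in fact already been done. First I would recall that Proposition \ref{mdissipative} establishes that $\AA$ is m-dissipative on the Hilbert space $\HH$ and that, in the course of its proof, $D(\AA)$ is shown to be dense in $\HH$. By the Lumer--Phillips theorem (see \cite{Pazy01}), a densely defined m-dissipative operator on a Hilbert space generates a $C_0$-semigroup of contractions $\left(e^{t\AA}\right)_{t\ge 0}$ on $\HH$; this is exactly the assertion quoted immediately before the statement of the theorem, and it is what transforms \eqref{eq-2.9} into a well-posed abstract Cauchy problem.

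Next I would invoke the standard existence/uniqueness theory for the abstract Cauchy problem associated with a semigroup generator. For an arbitrary initial datum $U_0\in\HH$, strong continuity of the semigroup yields that $U(t)=e^{t\AA}U_0$ belongs to $C^0(\R_+,\HH)$ and is, by definition, the unique mild (weak) solution of \eqref{eq-2.9}. For $U_0\in D(\AA)$, the classical regularity result for $C_0$-semigroups (see \cite{Pazy01}) gives that $t\mapsto e^{t\AA}U_0$ is continuously differentiable on $\R_+$, that $e^{t\AA}U_0\in D(\AA)$ for every $t\ge 0$ with $\frac{d}{dt}e^{t\AA}U_0=\AA e^{t\AA}U_0=e^{t\AA}\AA U_0$, and that $t\mapsto \AA e^{t\AA}U_0$ is continuous; equipping $D(\AA)$ with the graph norm, this reads precisely $U\in C^0(\R_+,D(\AA))\cap C^1(\R_+,\HH)$, i.e. $U$ is a strong solution. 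Uniqueness in both classes is the usual argument: if $V$ is another solution with $V(0)=U_0$, then $s\mapsto e^{(t-s)\AA}V(s)$ is constant on $[0,t]$, forcing $V(t)=e^{t\AA}U_0$.

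Since the only nontrivial inputs — the dissipativity estimate $\Re\left<\AA U,U\right>_{\HH}\le 0$ and the range condition $R(\lambda I-\AA)=\HH$ — were already verified in Proposition \ref{mdissipative}, I do not anticipate any genuine obstacle here. The two minor points worth recording are that the density of $D(\AA)$ in $\HH$ must be extracted from the \emph{proof} of Proposition \ref{mdissipative} (where it follows from $0\in\rho(\AA)$ and the dissipativity) rather than from its statement, and that the continuity of $t\mapsto\AA e^{t\AA}U_0$ in $\HH$ must be noted explicitly in order to upgrade the strong solution to the class $C^0(\R_+,D(\AA))$ with the graph norm.
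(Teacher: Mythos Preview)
Your proposal is correct and follows exactly the same route as the paper: the theorem is stated as an immediate consequence of Proposition~\ref{mdissipative} together with the Lumer--Phillips theorem, and the paper in fact gives no separate proof beyond the sentence ``According to Lumer--Phillips theorem (see \cite{Pazy01}), then the operator $\AA$ generates a $C_0$-semigroup of contractions $e^{t\AA}$ in $\HH$ which gives the well-posedness of \eqref{eq-2.9}.'' Your write-up simply makes explicit the standard regularity statements from \cite{Pazy01} that the paper leaves implicit.
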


\subsection{Strong Stability}\label{subss}
In this subsection, we will prove the strong stability of system \eqref{eq1}-\eqref{eq4}. We define the following conditions: 
\begin{equation}\label{SSC1}\tag{${\rm SSC1}$}
	\eqref{C1} \ \text{holds}\quad \text{and} \quad \abs{c_0}<\min\left(\frac{\sqrt{a}}{c_2-c_1},\frac{1}{c_2-c_1}\right),
\end{equation}
\begin{equation}\label{SSC2}\tag{${\rm SSC3}$}
	\eqref{C3} \ \text{holds},\quad 	a=1\quad \text{and}\quad \abs{c_0}<\frac{1}{c_2-c_1}.
\end{equation}
The main result of this section is the following theorem.
\begin{theoreme}\label{Th-SS1}
{\rm Assume that \eqref{SSC1} or \eqref{C2} or \eqref{SSC2} holds.  Then, the $C_0$-semigroup of contractions $\left(e^{t\mathcal{A}}\right)_{t\geq 0}$ is strongly stable in $\mathcal{H}$; i.e. for all $U_0\in \mathcal{H}$, the solution of \eqref{eq-2.9} satisfies 
$$
\lim_{t\to +\infty}\|e^{t\mathcal{A}}U_0\|_{\mathcal{H}}=0.
$$}
\end{theoreme}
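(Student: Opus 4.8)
The plan is to invoke the Arendt--Batty criterion: since $(e^{t\mathcal A})_{t\ge0}$ is a bounded $C_0$-semigroup and (as noted before Proposition \ref{mdissipative}) $0\in\rho(\mathcal A)$, it suffices to show that $\mathcal A$ has no purely imaginary eigenvalues and that $i\R\cap\sigma(\mathcal A)=\emptyset$, i.e. $i\R\subset\rho(\mathcal A)$. Because the resolvent of $\mathcal A$ is not compact (the damping and coupling are only local, and in Case 3 the second equation is conservative), one cannot reduce the whole spectral condition to the eigenvalue condition; instead one argues directly. First I would prove \textbf{$i\R\cap\sigma_p(\mathcal A)=\emptyset$}: suppose $\mathcal AU=i\lambda U$ with $\lambda\in\R$ and $U=(u,v,y,z)^\top\in D(\mathcal A)$, $\|U\|_{\mathcal H}=1$. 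Taking the real part of $\langle\mathcal AU,U\rangle_{\mathcal H}=i\lambda\|U\|^2$ and using the dissipation identity from the proof of Proposition \ref{mdissipative} gives $\int_0^L b|v_x|^2\,dx=\int_0^L d|z_x|^2\,dx=0$, hence $v_x\equiv0$ on $(b_1,b_2)$ (and $z_x\equiv0$ on $(d_1,d_2)$ in Cases 1--2). Since $v=i\lambda u$ and $z=i\lambda y$, this forces $u_x\equiv0$, hence $u\equiv0$, on the damping interval (when $\lambda\ne0$; the case $\lambda=0$ is excluded since $0\in\rho(\mathcal A)$).

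The heart of the argument is then a \textbf{unique continuation / cascade} through the disjoint supports. On the damping subinterval we have $u\equiv v\equiv0$; the first ODE $ -\,(au_x+bv_x)_x+cz+\lambda^2 u=0$ reduces, on a subinterval where $c=0$, to a second-order ODE for $u$ with the Cauchy data $u=u_x=0$ at an interior point, so $u\equiv0$ on the whole component of $\{b=0\}$ adjacent to it, and by iterating across the line $(0,L)$ one gets $u\equiv0$ on all of $(0,L)$; in particular $u\equiv0$ on the coupling interval $(c_1,c_2)$. Feeding $u\equiv0$ (hence $v=i\lambda u\equiv0$) into the second equation $-\,(y_x+dz_x)_x-cv+\lambda^2 y=0$ on $(c_1,c_2)$ yields $-(y_x+dz_x)_x+\lambda^2y=0$ there with the coupling term gone; combined with $z\equiv0$ where $d\ne0$ (Cases 1--2) or directly (Case 3, $d\equiv0$) and another unique-continuation step, one propagates $y\equiv0$ across $(0,L)$. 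Thus $U\equiv0$, a contradiction. The smallness conditions on $|c_0|$ in \eqref{SSC1} and \eqref{SSC2} enter precisely to make the relevant constant-coefficient (or $1$D Sturm--Liouville) sub-problem on the coupling interval nondegenerate — e.g. to rule out that $\lambda^2$ is a Dirichlet eigenvalue of the coupled $2\times2$ system on $(c_1,c_2)$, which is where a Poincaré-type inequality with constant $c_2-c_1$ is used.

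Next I would establish \textbf{$i\R\subset\rho(\mathcal A)$}. Arguing by contradiction and using that $\mathcal A$ has compact resolvent would be the easy route, but here it does not; instead one shows $i\lambda I-\mathcal A$ is bijective for each $\lambda\in\R\setminus\{0\}$ by a direct a priori estimate: given $F\in\mathcal H$, solve $(i\lambda I-\mathcal A)U=F$. Writing the system out, the dissipation gives control of $\int b|v_x|^2+\int d|z_x|^2$ by $\|F\|\,\|U\|$; then the same cascade of localized multiplier estimates (multiplying by cutoff functions supported across $b$, across $c$, and across $d$, and by $x u_x$, $x y_x$ type multipliers) propagates this control to a full bound $\|U\|_{\mathcal H}\lesssim\|F\|_{\mathcal H}$, which simultaneously gives injectivity and, with the a priori bound plus density/Fredholm alternative coming from the compactness of lower-order terms, surjectivity. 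Combined with the continuity of the resolvent this yields $i\R\subset\rho(\mathcal A)$.

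The main obstacle will be the unique-continuation cascade across the \emph{disjoint} supports: because the damping region and the coupling region do not overlap, one cannot kill $u$ and $y$ simultaneously in one step, and the argument must carefully alternate — use the damping to kill $u$ on $\omega_b$, propagate $u\equiv0$ through the coupling-free zone to reach $\omega_c$, then use the now-decoupled $y$-equation together with (in Cases 1--2) the $d$-damping, or (in Case 3) the Sturm--Liouville structure with $a=1$ and the sharp bound on $|c_0|$, to kill $y$. Managing the three geometric configurations \eqref{C1}, \eqref{C2}, \eqref{SSC2} uniformly, and checking that in each case the constant-coefficient sub-problem on $(c_1,c_2)$ is nonresonant under the stated smallness hypothesis, is the delicate part; the rest is routine elliptic ODE bookkeeping.
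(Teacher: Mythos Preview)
Your outline has the right framework (Arendt--Batty, kill imaginary eigenvalues, then surjectivity), but the core ``cascade'' step contains a genuine gap. You write that after getting $u\equiv0$ on the damping interval one can ``iterat[e] across the line $(0,L)$'' to obtain $u\equiv0$ on \emph{all} of $(0,L)$, and then feed this into the $y$-equation to decouple it. This is false: unique continuation for the $u$-equation works only where $c\equiv0$, so from $u=0$ on $(b_1,b_2)$ you reach at most $u=0$ on $(0,c_1)$ (and $u(c_1)=u_x(c_1)=0$). On the coupling interval $(c_1,c_2)$ the $u$-equation is $\lambda^2u+au_{xx}-i\lambda c_0 y=0$ and still contains the unknown $y$, so the propagation stops there. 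Symmetrically, in Case~1 the $d$-damping gives only $y=0$ on $(c_2,L)$. Thus the $y$-equation on $(c_1,c_2)$ is \emph{not} decoupled, and your scheme does not close.

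What is actually needed on $(c_1,c_2)$ is a \emph{coupled} multiplier estimate using the one-sided Cauchy data you have just obtained. In Case~1 the paper multiplies \eqref{eq-2.27} by $-2(x-c_2)\bar u_x$ and \eqref{eq-2.2.8} by $-2(x-c_1)\bar y_x$, integrates over $(c_1,c_2)$, and uses $u(c_1)=u_x(c_1)=0$, $y(c_2)=y_x(c_2)=0$; Young's inequality on the cross-terms yields a coefficient $1-|c_0|^2(c_2-c_1)^2/\min(a,1)$, and the smallness condition \eqref{SSC1} makes this positive, forcing $u=y=0$ on $(c_1,c_2)$. In Case~3 there is no Cauchy data for $y$ at all; the paper first proves the global identity $\int_{c_1}^{c_2}|u|^2=\int_{c_1}^{c_2}|y|^2$ (obtained by testing \eqref{eq-2.27} against $\bar y$ and \eqref{eq-2.2.8} against $\bar u$ over $(0,L)$, which requires $a=1$), and only then can the single multiplier $-2(x-c_2)\bar u_x$ close under \eqref{SSC2}. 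Your description of the role of $|c_0|$ as ``ruling out a Dirichlet eigenvalue'' misses this mechanism: the data on $(c_1,c_2)$ are one-sided Cauchy data, not Dirichlet, and the smallness enters through the multiplier inequality, not a spectral gap. Without this step your argument does not prove $\ker(i\lambda I-\mathcal A)=\{0\}$.
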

\noindent According to Theorem \ref{App-Theorem-A.2}, to prove Theorem \ref{Th-SS1}, we need to prove that the operator $\AA$ has no pure imaginary eigenvalues and $\sigma(\AA)\cap i\R $ is countable. Its proof has been divided into the following  Lemmas. 
\begin{lemma}\label{ker-SS123}
	{\rm
Assume that  \eqref{SSC1} or \eqref{C2} or  \eqref{SSC2} holds. Then, for all $\la \in \mathbb{R}$, $i\la I-\mathcal{A}$ is injective, i.e. 
$$
\ker\left(i\la I-\mathcal{A}\right)=\left\{0\right\}.
$$}
\end{lemma}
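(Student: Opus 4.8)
The plan is to argue by contradiction: suppose $\lambda\in\R$ and $U=(u,v,y,z)^\top\in D(\AA)$ satisfies $i\la U=\AA U$, and show $U=0$. Writing out the four component equations gives $v=i\la u$, $z=i\la y$, together with
\begin{equation*}
i\la v-(au_x+bv_x)_x+cz=0,\qquad i\la z-(y_x+dz_x)_x-cv=0.
\end{equation*}
First I would take the real part of $\langle i\la U-\AA U,U\rangle_{\HH}=0$; since $\AA$ is dissipative with $\Re\langle\AA U,U\rangle_{\HH}=-\int_0^L b|v_x|^2dx-\int_0^L d|z_x|^2dx$, this forces $bv_x\equiv 0$ and $dz_x\equiv 0$, i.e. $u_x=v_x=0$ on the damping interval $(b_1,b_2)$ and, in Cases \eqref{C1}--\eqref{C2}, $y_x=z_x=0$ on $(d_1,d_2)$. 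Since $v=i\la u$, on $(b_1,b_2)$ both $u$ and $u_x$ vanish when $\la\neq0$ (the case $\la=0$ being handled by $0\in\rho(\AA)$ from Proposition \ref{mdissipative}); likewise $y=y_x=0$ on $(d_1,d_2)$ in Cases \eqref{C1}--\eqref{C2}.

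Next I would exploit the disjointness of the supports to decouple the system on subintervals. On the coupling interval $(c_1,c_2)$ the damping coefficients $b$ and $d$ vanish, so there $u$ solves $-au_{xx}-\la^2 u+i\la c_0 y=0$ and $y$ solves $-y_{xx}-\la^2 y-i\la c_0 u=0$; elsewhere (off $(c_1,c_2)$) the two equations are fully decoupled damped ODEs. The strategy is a propagation/unique-continuation argument: starting from the interval where $u\equiv0$ (namely $(b_1,b_2)$, which lies to the left of $(c_1,c_2)$ in all three cases), $u$ extends by zero as a solution of the linear second-order ODE $-au_{xx}=\la^2 u$ on $(0,c_1)$, giving $u\equiv0$ on $(0,c_1)$ and in particular $u(c_1)=u_x(c_1)=0$. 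Then on $(c_1,c_2)$ the coupled pair $(u,y)$ satisfies a $4\times4$ first-order linear ODE system; knowing $u(c_1)=u_x(c_1)=0$ is not by itself enough, so one also needs information on $y$ at $c_1$. In Cases \eqref{C1}--\eqref{C2} this comes from the other damping interval $(d_1,d_2)$: there $y\equiv0$, hence $y$ extended as a solution of $-y_{xx}=\la^2 y$ vanishes on a neighbourhood reaching an endpoint of $(c_1,c_2)$, so $y$ and $y_x$ vanish at that endpoint. Feeding both Cauchy data into the coupled system on $(c_1,c_2)$ and using uniqueness for linear ODEs yields $u\equiv y\equiv0$ on $(c_1,c_2)$, and then continuing the decoupled ODEs with zero Cauchy data to the ends $0$ and $L$ (using the Dirichlet conditions) gives $U=0$.

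The delicate point — and where the smallness conditions \eqref{SSC1}, \eqref{SSC2} on $|c_0|$ and the normalization $a=1$ in Case \eqref{C3} enter — is Case \eqref{C3}, where $d\equiv0$ so there is no second damping interval to pin down $y$. Here one only knows $u\equiv0$ (hence $y\equiv0$ via the first equation on $(b_1,b_2)$ where $c=0$... actually $c=0$ there, so the first equation gives no info on $y$); instead one must work on $(0,c_1)\cup(c_2,L)$ where the system is $-u_{xx}-\la^2u=0$ (since $a=1$) and $-y_{xx}-\la^2 y=0$ decoupled, with $u\equiv0$ already known, and then on $(c_1,c_2)$ solve the coupled system having Cauchy data $u(c_1)=u_x(c_1)=0$ plus the Dirichlet condition $y(0)=0$ propagated to give one relation; the resulting homogeneous boundary-value problem for $y$ on $(0,c_2)$ coupled to $u$ has only the trivial solution precisely when $|c_0|<1/(c_2-c_1)$, by a Poincaré-type estimate. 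I would carry this out by multiplying the $u$-equation on $(c_1,c_2)$ by $\overline u$, integrating, using $u(c_1)=u_x(c_1)=0$, and bounding the coupling term $\int_{c_1}^{c_2}|c_0||y||u|$ by $|c_0|(c_2-c_1)\|u_x\|_{L^2(c_1,c_2)}\|y\|_{L^\infty}$ type inequalities to force $u\equiv0$ on $(c_1,c_2)$, hence $u_x(c_2)=0$ and $u\equiv0$ on all of $(0,L)$; then $y$ solves the undamped wave eigenvalue problem with full Dirichlet data and the extra vanishing inherited from the coupling interval, which kills it. I expect the main obstacle to be organizing this unique-continuation bookkeeping cleanly across the three geometries simultaneously and verifying that the stated smallness constants are exactly what the Poincaré inequalities on $(c_1,c_2)$ require.
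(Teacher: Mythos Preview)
Your overall strategy (dissipation $\Rightarrow$ vanishing of $u_x,v_x$ on $(b_1,b_2)$ and of $y_x,z_x$ on $(d_1,d_2)$, then unique continuation off the damping intervals, then analysis on $(c_1,c_2)$, then propagation to the rest) matches the paper. Your treatment of Case~\eqref{C2} is correct and agrees with the paper: both damping intervals lie to the \emph{left} of $(c_1,c_2)$, so you get $u(c_1)=u_x(c_1)=y(c_1)=y_x(c_1)=0$, and ODE uniqueness on $(c_1,c_2)$ finishes it without any smallness on $c_0$.

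The genuine gap is in Case~\eqref{C1}. There $(b_1,b_2)$ lies to the left of $(c_1,c_2)$ but $(d_1,d_2)$ lies to the \emph{right}, so the Cauchy data you obtain are $u(c_1)=u_x(c_1)=0$ and $y(c_2)=y_x(c_2)=0$ --- at \emph{opposite} endpoints. On $(c_1,c_2)$ the pair $(u,y)$ then satisfies a $4\times 4$ first-order system with two conditions at $c_1$ and two at $c_2$; this is a boundary value problem, not an initial value problem, and ``uniqueness for linear ODEs'' does not apply. There is no reason for it to have only the trivial solution without further restriction, and this is precisely why \eqref{SSC1} carries the smallness hypothesis $|c_0|<\min\bigl(\sqrt{a}/(c_2-c_1),\,1/(c_2-c_1)\bigr)$. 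The paper handles this by multiplying the $u$-equation by $-2(x-c_2)\overline{u}_x$ and the $y$-equation by $-2(x-c_1)\overline{y}_x$ on $(c_1,c_2)$, integrating by parts (the boundary terms vanish thanks to the data at $c_1$ and $c_2$ respectively), adding, and using Young's inequality to absorb the coupling terms; the smallness condition is exactly what makes the resulting form coercive. You need to replace your ODE-uniqueness claim for Case~1 by an argument of this type.

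Your sketch for Case~\eqref{C3} also needs tightening. Multiplying the $u$-equation by $\overline{u}$ gives $\int|\lambda u|^2-\int|u_x|^2+\text{coupling}=0$, with the wrong sign on $|u_x|^2$ to conclude anything. The paper first proves the identity $\int_{c_1}^{c_2}|u|^2=\int_{c_1}^{c_2}|y|^2$ (by testing each equation against the other unknown and adding), and then uses the Rellich-type multiplier $-2(x-c_2)\overline{u}_x$ on $(c_1,c_2)$ together with $u(c_1)=u_x(c_1)=0$ to obtain $\bigl(1-|c_0|(c_2-c_1)\bigr)\int_{c_1}^{c_2}(|\lambda u|^2+|u_x|^2)\le 0$; the equality $\int|u|^2=\int|y|^2$ then also forces $y=0$ on $(c_1,c_2)$. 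This is where both $a=1$ and $|c_0|<1/(c_2-c_1)$ from \eqref{SSC2} are used.
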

\begin{proof}
From Proposition \ref{mdissipative}, we have $0\in \rho(\mathcal{A})$. We still need to show the result for $\la\in \R^{\ast}$. For this aim, suppose that there exists a real number $\la\neq 0$ and  $U=\left(u,v,y,z\right)^\top\in D(\AA)$ such that
\begin{equation*}
\AA U=i\la U.
\end{equation*}
Equivalently, we have 
\begin{eqnarray}
v&=&i\la u,\label{eq-2.20}\\
(au_{x}+bv_{x})_{x}-cz&=&i\la v,\label{eq-2.21}\\
z&=&i\la y,\label{eq-2.22}\\
(y_{x}+dz_x)+cv&=&i\la z.\label{eq-2.23}
\end{eqnarray}
Next, a straightforward computation gives 
\begin{equation}\label{Re}
0=\Re\left<i\la U,U\right>_{\HH}=\Re\left<\AA U,U\right>_{\HH}=-\int_0^L b|v_x|^2dx-\int_0^L d|z_x|^2dx.
\end{equation}
Inserting \eqref{eq-2.20} and \eqref{eq-2.22} in \eqref{eq-2.21} and \eqref{eq-2.23}, we get
\begin{eqnarray}
\la^2u+(au_{x}+i\la bu_x)_x-i\la cy&=&0\quad \text{in}\quad (0,L),\label{eq-2.27}\\
\la^2y+(y_{x}+i\la dy_x)_x+i\la cu&=&0\quad \text{in}\quad (0,L),\label{eq-2.2.8}
\end{eqnarray}
with the boundary conditions 
\begin{equation}\label{boundaryconditionker}
u(0)=u(L)=y(0)=y(L)=0.
\end{equation}
$\bullet$ \textbf{Case 1:} Assume that \eqref{SSC1} holds.  
 From \eqref{eq-2.20}, \eqref{eq-2.22} and \eqref{Re}, we deduce that 
\begin{equation}\label{2.10}
u_x=	v_x=0 \ \ \text{in} \ \ (b_1,b_2) \ \ \text{and} \ \ y_x=z_x =0 \ \ \text{in} \ \ (d_1,d_2).
\end{equation}
Using \eqref{eq-2.27}, \eqref{eq-2.2.8} and \eqref{2.10}, we obtain
\begin{equation}\label{2interval}
\la^2u+au_{xx}=0\ \ \text{in}\ \ (0,c_1)\quad \text{and}\quad \la^2y+y_{xx}=0\ \ \text{in}\ \ (c_2,L).
\end{equation}
Deriving the above equations with respect to $x$ and using \eqref{2.10}, we get 
\begin{equation}\label{2interval1}
\left\{\begin{array}{lll}
\la^2u_x+au_{xxx}=0&\text{in}&(0,c_1),\\[0.1in]
u_x=0&\text{in}&(b_1,b_2)\subset (0,c_1),
\end{array}
\right.\quad \text{and}\quad 
\left\{\begin{array}{lll}
\la^2y_x+y_{xxx}=0&\text{in}&(c_2,L),\\[0.1in]
y_x=0&\text{in}&(d_1,d_2)\subset (c_2,L).
\end{array}
\right.
\end{equation}
Using the unique continuation theorem, we get 
\begin{equation}\label{2interval2}
u_x=0\ \ \text{in}\ \  (0,c_1)\quad \text{and}\quad y_x=0\ \ \text{in}\ \  (c_2,L)
\end{equation}
Using \eqref{2interval2} and the fact that $u(0)=y(L)=0$, we get 
\begin{equation}\label{2interval3}
u=0\ \ \text{in}\ \  (0,c_1)\quad \text{and}\quad y=0\ \ \text{in}\ \  (c_2,L).
\end{equation}
Now, our aim is to prove that $u=y=0 \ \text{in} \ (c_1,c_2)$. For this aim, using \eqref{2interval3} and the fact that $u, y\in C^1([0,L])$, we obtain the following boundary conditions
\begin{equation}\label{1c1c2}
u(c_1)=u_x(c_1)=y(c_2)=y_x(c_2)=0.
\end{equation}
 Multiplying \eqref{eq-2.27} by $-2(x-c_2)\overline{u}_x$, integrating over $(c_1,c_2)$ and taking the real part, we get 
\begin{equation}\label{ST1step2}
-\int_{c_1}^{c_2}\la^2(x-c_2)(\abs{u}^2)_xdx-a\int_{c_1}^{c_2}(x-c_2)\left(\abs{u_x}^2\right)_xdx+2\Re\left(i\la c_0\int_{c_1}^{c_2}(x-c_2)y\overline{u}_xdx\right)=0,
\end{equation}
using integration by parts and \eqref{1c1c2}, we get 
\begin{equation}\label{ST2step2}
\int_{c_1}^{c_2}\abs{\la u}^2dx+a\int_{c_1}^{c_2}\abs{u_x}^2dx+2\Re\left(i\la c_0\int_{c_1}^{c_2}(x-c_2)y\overline{u}_xdx\right)=0.
\end{equation}
Multiplying \eqref{eq-2.2.8} by $-2(x-c_1)\overline{y}_x$, integrating over $(c_1,c_2)$, taking the real part, and using the same argument as above, we get 
\begin{equation}\label{ST3step2}
\int_{c_1}^{c_2}\abs{\la y}^2dx+\int_{c_1}^{c_2}\abs{y_x}^2dx+2\Re\left(i\la c_0\int_{c_1}^{c_2}(x-c_1)u\overline{y}_x dx\right)=0.
\end{equation}
Adding \eqref{ST2step2} and \eqref{ST3step2}, we get 
\begin{equation}\label{ST4step2}
\int_{c_1}^{c_2}\abs{\la u}^2dx+a\int_{c_1}^{c_2}\abs{u_x}^2dx+\int_{c_1}^{c_2}\abs{\la y}^2dx+\int_{c_1}^{c_2}\abs{y_x}^2dx\leq 2\abs{\la}\abs{c_0}(c_2-c_1)\int_{c_1}^{c_2}\left(\abs{y}\abs{u_x}+\abs{u}\abs{y_x}\right)dx.
\end{equation}
Using Young's inequality in \eqref{ST4step2}, we get 
\begin{equation}\label{ST5step2}
\begin{array}{c}
\displaystyle 
\int_{c_1}^{c_2}\abs{\la u}^2dx+a\int_{c_1}^{c_2}\abs{u_x}^2dx+\int_{c_1}^{c_2}\abs{\la y}^2dx+\int_{c_1}^{c_2}\abs{y_x}^2dx\leq \frac{c_0^2(c_2-c_1)^2}{a}\int_{c_1}^{c_2}\abs{\la y}^2dx\vspace{0.25cm}\\
\displaystyle 
+\, a\int_{c_1}^{c_2}\abs{u_x}^2dx+c_0^2(c_2-c_1)^2\int_{c_1}^{c_2}\abs{\la u}^2dx+\int_{c_1}^{c_2}\abs{y_x}^2dx,
\end{array}
\end{equation}
consequently, we get 
\begin{equation}\label{ST6step2}
\left(1-\frac{c_0^2(c_2-c_1)^2}{a}\right)\int_{c_1}^{c_2}\abs{\la y}^2dx+\left(1-c_0^2(c_2-c_1)^2\right)\int_{c_1}^{c_2}\abs{\la u}^2dx\leq 0.
\end{equation}
Thus, from the above inequality and \eqref{SSC1}, we get 
\begin{equation}\label{0c1c2}
u=y=0 \ \ \text{in} \ \ (c_1,c_2).
\end{equation}
Next, we need to prove that $u=0$ in $(c_2,L)$ and $y=0$ in $(0,c_1)$. For this aim, from \eqref{0c1c2} and the fact that $u,y \in C^1([0,L])$, we obtain
\begin{equation}\label{ST1step3}
u(c_2)=u_x(c_2)=0\quad \text{and}\quad y(c_1)=y_x(c_1)=0. 
\end{equation}
It follows from \eqref{eq-2.27}, \eqref{eq-2.2.8} and \eqref{ST1step3} that 
\begin{equation}\label{ST2step3}
\left\{\begin{array}{lll}
\la^2u+au_{xx}=0\ \ \text{in}\ \ (c_2,L),\\[0.1in]
u(c_2)=u_x(c_2)=u(L)=0,
\end{array}
\right.\quad \text{and}\quad 
\left\{\begin{array}{rcc}
\la^2y+y_{xx}=0\ \ \text{in}\ \ (0,c_1),\\[0.1in]
y(0)=y(c_1)=y_x(c_1)=0.
\end{array}
\right.
\end{equation}
Holmgren uniqueness theorem yields
\begin{equation}\label{2.25}
 u=0 \ \ \text{in} \ \ (c_2,L) \ \ \text{and}  \ \ y=0 \ \ \text{in} \ \  (0,c_1).
\end{equation}
Therefore, from \eqref{eq-2.20}, \eqref{eq-2.22}, \eqref{2interval3}, \eqref{0c1c2} and \eqref{2.25}, we deduce that
$$
U=0.
$$
$\bullet$ \textbf{Case 2:} Assume that  \eqref{C2} holds. From \eqref{eq-2.20}, \eqref{eq-2.22} and \eqref{Re}, we deduce that 
\begin{equation}\label{2.10*}
	u_x=	v_x=0 \ \ \text{in} \ \ (b_1,b_2) \ \ \text{and} \ \ y_x=z_x =0 \ \ \text{in} \ \ (d_1,d_2).
\end{equation}
Using \eqref{eq-2.27}, \eqref{eq-2.2.8} and \eqref{2.10*}, we obtain
\begin{equation}\label{2interval}
\la^2u+au_{xx}=0\ \ \text{in}\ \ (0,c_1)\quad \text{and}\quad \la^2y+y_{xx}=0\ \ \text{in}\ \ (0,c_1).
\end{equation}
Deriving the above equations with respect to $x$ and using \eqref{2.10*}, we get 
\begin{equation}\label{C22interval1}
\left\{\begin{array}{lll}
\la^2u_x+au_{xxx}=0&\text{in}&(0,c_1),\\[0.1in]
u_x=0&\text{in}&(b_1,b_2)\subset (0,c_1),
\end{array}
\right.\quad \text{and}\quad 
\left\{\begin{array}{lll}
\la^2y_x+y_{xxx}=0&\text{in}&(0,c_1),\\[0.1in]
y_x=0&\text{in}&(d_1,d_2)\subset (0,c_1).
\end{array}
\right.
\end{equation}
Using the unique continuation theorem, we get 
\begin{equation}\label{C22interval2}
u_x=0\ \ \text{in}\ \  (0,c_1)\quad \text{and}\quad y_x=0\ \ \text{in}\ \  (0,c_1).
\end{equation}
From \eqref{C22interval2} and the fact that $u(0)=y(0)=0$, we get 
\begin{equation}\label{C22interval3}
u=0\ \ \text{in}\ \  (0,c_1)\quad \text{and}\quad y=0\ \ \text{in}\ \  (0,c_1).
\end{equation}
Using the fact that $u,y\in C^1([0,L])$ and \eqref{C22interval3}, we get 
\begin{equation}\label{C21}
u(c_1)=u_x(c_1)=y(c_1)=y_x(c_1)=0.
\end{equation}
Now, using the definition of $c(x)$ in \eqref{eq-2.27}-\eqref{eq-2.2.8}, \eqref{2.10*} and \eqref{C21} and Holmgren theorem, we get  
 $$u=y=0\ \text{ in} \ (c_1,c_2).$$
  Again, using the fact that $u,y\in C^1([0,L])$, we get 
\begin{equation}\label{C22}
u(c_2)=u_x(c_2)=y(c_2)=y_x(c_2)=0. 
\end{equation}
Now, using the same argument as in Case 1, we obtain 
$$u=y=0 \ \text{in} \  (c_2,L),$$ 
consequently, we deduce that
$$
U=0.
$$
$\bullet$ \textbf{Case 3:} Assume that  \eqref{SSC2} holds.
\noindent Using the same argument as in Cases 1 and 2, we obtain 
\begin{equation}\label{C3-SST1}
u=0\ \ \text{in}\ \ (0,c_1)\quad \text{and}\quad u(c_1)=u_x(c_1)=0.
\end{equation}
\textbf{Step 1.} The aim of this step is to prove that 
\begin{equation}\label{2c1c2}
\int_{c_1}^{c_2}\abs{u}^2dx=\int_{c_1}^{c_2}\abs{y}^2dx. 
\end{equation}
For this aim, multiplying \eqref{eq-2.27} by $\overline{y}$ and \eqref{eq-2.2.8} by $\overline{u}$ and using integration by parts, we get 
\begin{eqnarray}
\int_{0}^{L}\la^2u\overline{y}dx-\int_{0}^{L}u_x\overline{y_x}dx-i\la c_0\int_{c_1}^{c_2}\abs{y}^2dx&=&0,\label{3c1c2}\\[0.1in]
\int_{0}^{L}\la^2y\overline{u}dx-\int_{0}^{L}y_x\overline{u_x}dx+i\la c_0\int_{c_1}^{c_2}\abs{u}^2dx&=&0.\label{4c1c2} 
\end{eqnarray}
Adding \eqref{3c1c2} and \eqref{4c1c2}, taking the imaginary part, we get \eqref{2c1c2}.\\[0.1in] 
\textbf{Step 2.}  
 Multiplying \eqref{eq-2.27} by $-2(x-c_2)\overline{u}_x$, integrating over $(c_1,c_2)$ and taking the real part, we get 
\begin{equation}\label{C3ST3step2}
-\Re\left(\int_{c_1}^{c_2}\la^2(x-c_2)(\abs{u}^2)_xdx\right)-\Re\left(\int_{c_1}^{c_2}(x-c_2)\left(\abs{u_x}^2\right)_xdx\right)+2\Re\left(i\la c_0\int_{c_1}^{c_2}(x-c_2)y\overline{u}_xdx\right)=0,
\end{equation}
using integration by parts in \eqref{C3ST3step2} and \eqref{C3-SST1}, we get 
\begin{equation}\label{C3ST4step2}
\int_{c_1}^{c_2}\abs{\la u}^2dx+a\int_{c_1}^{c_2}\abs{u_x}^2dx+2\Re\left(i\la c_0\int_{c_1}^{c_2}(x-c_2)y\overline{u}_xdx\right)=0.
\end{equation}
Using Young's inequality in \eqref{C3ST4step2}, we obtain 
\begin{equation}\label{C3ST5step2}
\int_{c_1}^{c_2}\abs{\la u}^2dx+\int_{c_1}^{c_2}\abs{u_x}^2dx\leq \abs{c_0}(c_2-c_1)\int_{c_1}^{c_2}\abs{\la y}^2dx+\abs{c_0}(c_2-c_1)\int_{c_1}^{c_2}\abs{u_x}^2dx. 
\end{equation}
Inserting \eqref{2c1c2} in \eqref{C3ST5step2}, we get 
\begin{equation}\label{C3ST6step2}
\left(1-\abs{c_0}(c_2-c_1)\right)\int_{c_1}^{c_2}\left(\abs{\la u}^2+\abs{u_x}^2\right)dx\leq 0.
\end{equation}
According to \eqref{SSC2} and \eqref{2c1c2}, we get 
\begin{equation}\label{C3ST7step2}
u=y=0\quad \text{in}\quad (c_1,c_2).
\end{equation}
\textbf{Step 3.} Using the fact that $u\in H^2(c_1,c_2)\subset C^1([c_1,c_2])$, we get 
\begin{equation}\label{C3ST1step3}
u(c_1)=u_x(c_1)=y(c_1)=y_x(c_1)=y(c_2)=y_x(c_2)=0.
\end{equation}
Now, from \eqref{eq-2.27}, \eqref{eq-2.2.8} and the definition of $c$, we get 
\begin{equation*}
\left\{\begin{array}{lll}
\la^2u+u_{xx}=0\ \ \text{in} \ \ (c_2,L),\\
u(c_2)=u_x(c_2)=0,
\end{array}
\right.\quad \text{and}\quad 
\left\{\begin{array}{lll}
\la^2y+y_{xx}=0\ \ \text{in}\ \ (0,c_1)\cup (c_2,L),\\
y(c_1)=y_x(c_1)=y(c_2)=y_x(c_2)=0.
\end{array}
\right.
\end{equation*}
From the above systems and Holmgren uniqueness Theorem, we get 
\begin{equation}\label{C3ST2step3}
u=0\ \ \text{in}\ \ (c_2,L)\quad \text{and}\quad y=0\ \ \text{in}\ \ (0,c_1)\cup (c_2,L). 
\end{equation}
\\ \noindent Consequently, using \eqref{C3-SST1}, \eqref{C3ST7step2} and \eqref{C3ST2step3}, we get $U=0$. The proof is thus completed.
\end{proof}
\begin{lemma}\label{surjectivity}
	{\rm
Assume that \eqref{SSC1} or \eqref{C2} or  \eqref{SSC2} holds. Then, for all $\lambda\in \mathbb{R}$, we have 
$$
R\left(i\la I-\mathcal{A}\right)=\mathcal{H}.
$$}
\end{lemma}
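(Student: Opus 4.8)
The plan is to turn the resolvent equation $(i\la I-\AA)U=F$ into a variational problem for the two wave components $(u,y)$ only, and then to run a Fredholm‑alternative argument in which Lemma~\ref{ker-SS123} provides triviality of the kernel. Fix $\la\in\R$; the case $\la=0$ is already contained in Proposition~\ref{mdissipative} (since $0\in\rho(\AA)$), so assume $\la\neq 0$. Given $F=(f^1,f^2,f^3,f^4)^\top\in\HH$ and looking for $U=(u,v,y,z)^\top\in D(\AA)$ with $(i\la I-\AA)U=F$, the first and third lines force $v=i\la u-f^1$ and $z=i\la y-f^3$, which already lie in $H^1_0(0,L)$ as soon as $u,y$ do, because $f^1,f^3\in H^1_0(0,L)$. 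Substituting, $(u,y)$ must solve, in $\mathcal D'(0,L)$ with homogeneous Dirichlet conditions,
\begin{align*}
-\la^2u-\big((a+i\la b)u_x\big)_x+i\la c\,y&=f^2+i\la f^1+cf^3-(bf^1_x)_x,\\
-\la^2y-\big((1+i\la d)y_x\big)_x-i\la c\,u&=f^4+i\la f^3-cf^1-(df^3_x)_x .
\end{align*}
Testing against $(\phi,\psi)\in V:=H^1_0(0,L)\times H^1_0(0,L)$ and integrating by parts, this is equivalent to: find $(u,y)\in V$ with $\mathcal B\big((u,y),(\phi,\psi)\big)=\mathcal L(\phi,\psi)$ for all $(\phi,\psi)\in V$, where
\[
\mathcal B\big((u,y),(\phi,\psi)\big)=\int_0^L(a+i\la b)u_x\overline{\phi_x}\,dx+\int_0^L(1+i\la d)y_x\overline{\psi_x}\,dx-\la^2\!\int_0^L(u\overline\phi+y\overline\psi)\,dx+i\la\!\int_0^L c\,(y\overline\phi-u\overline\psi)\,dx,
\]
and $\mathcal L$ is the bounded conjugate‑linear form produced by the right‑hand sides (the contributions $\int_0^L bf^1_x\overline{\phi_x}\,dx$ and $\int_0^L df^3_x\overline{\psi_x}\,dx$ make sense because $b,d\in L^\infty(0,L)$ and $f^1_x,f^3_x\in L^2(0,L)$).

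Next I would split $\mathcal B=\widetilde{\mathcal B}+\mathcal K$ with
\[
\widetilde{\mathcal B}\big((u,y),(\phi,\psi)\big)=\int_0^L(a+i\la b)u_x\overline{\phi_x}\,dx+\int_0^L(1+i\la d)y_x\overline{\psi_x}\,dx+\int_0^L(u\overline\phi+y\overline\psi)\,dx .
\]
Since $b,d\geq 0$, one has $\Re\,\widetilde{\mathcal B}\big((u,y),(u,y)\big)=a\|u_x\|^2+\|y_x\|^2+\|u\|^2+\|y\|^2$, so $\widetilde{\mathcal B}$ is bounded and coercive on $V$ and, by Lax--Milgram, induces an isomorphism $\widetilde B:V\to V'$. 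The remainder $\mathcal K\big((u,y),(\phi,\psi)\big)=-(\la^2+1)\int_0^L(u\overline\phi+y\overline\psi)\,dx+i\la\int_0^L c\,(y\overline\phi-u\overline\psi)\,dx$ involves only $L^2$‑pairings of $u,y$ against $\phi,\psi$, hence the operator $K:V\to V'$ it defines factors through the embedding $V\hookrightarrow L^2(0,L)\times L^2(0,L)$, which is compact by Rellich's theorem; so $K$ is compact and $B:=\widetilde B+K$ is a Fredholm operator of index zero. In particular $B$ is surjective if and only if it is injective.

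To obtain injectivity of $B$, let $(u,y)\in V$ satisfy $\mathcal B\big((u,y),(\phi,\psi)\big)=0$ for all $(\phi,\psi)\in V$; then $(u,y)$ solves \eqref{eq-2.27}--\eqref{eq-2.2.8} with the boundary conditions \eqref{boundaryconditionker}, and reading these equations gives $(au_x+i\la bu_x)_x,\,(y_x+i\la dy_x)_x\in L^2(0,L)$, so that $U:=(u,i\la u,y,i\la y)^\top\in D(\AA)$ and $\AA U=i\la U$. By Lemma~\ref{ker-SS123}, $U=0$, hence $(u,y)=0$. Therefore $B$ is bijective, and the variational problem with data $\mathcal L$ has a unique solution $(u,y)\in V$. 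Setting $v=i\la u-f^1$ and $z=i\la y-f^3$ and undoing the integration by parts, the second and fourth equations of $(i\la I-\AA)U=F$ hold in $\mathcal D'(0,L)$; written as $(au_x+bv_x)_x=i\la v+cz-f^2$ and $(y_x+dz_x)_x=i\la z-cv-f^4$, whose right‑hand sides lie in $L^2(0,L)$, they show $U=(u,v,y,z)^\top\in D(\AA)$ and $(i\la I-\AA)U=F$. This proves $R(i\la I-\AA)=\HH$.

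The point requiring care is that the resolvent of $\AA$ is \emph{not} compact — the Kelvin--Voigt term forces $u_x=(b/a)f^1_x+\cdots$, a bounded but non‑compact operation from $H^1_0(0,L)$ to $L^2(0,L)$ — so the usual shortcut $i\la I-\AA=-\AA(I-i\la\AA^{-1})$ with $\AA^{-1}$ compact is unavailable. The reduction above is designed to sidestep exactly this: all the non‑compact contributions are absorbed into the bounded functional $\mathcal L$ and into the coercive form $\widetilde{\mathcal B}$, leaving only the genuinely lower‑order terms (the $-\la^2$ terms and the zeroth‑order coupling) as a compact perturbation, after which the Fredholm alternative together with Lemma~\ref{ker-SS123} closes the argument.
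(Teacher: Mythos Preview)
Your argument is correct and is precisely the standard route that the paper defers to when it writes ``See Lemma 2.5 in \cite{Wehbe2021}'': reduce $(i\la I-\AA)U=F$ to a variational problem on $V=H^1_0(0,L)^2$, split the sesquilinear form into a coercive principal part (handled by the complex Lax--Milgram theorem) plus a compact lower-order perturbation, and close with the Fredholm alternative together with Lemma~\ref{ker-SS123}. Your final paragraph correctly pinpoints why this detour is necessary here---the Kelvin--Voigt term makes $\AA^{-1}$ non-compact, so the cheap argument via compactness of the resolvent is unavailable---and this is exactly the reason the cited references proceed variationally as you do.
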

\begin{proof}
See Lemma 2.5 in \cite{Wehbe2021} (see also \cite{ABNWmemory}).
\end{proof}\\\linebreak 

\noindent \textbf{Proof of Theorems \ref{Th-SS1}}. From Lemma \ref{ker-SS123}, we obtain that the operator $\mathcal{A}$ has no pure imaginary eigenvalues (i.e. $\sigma_p(\AA)\cap i\R=\emptyset$). Moreover, from Lemma \ref{surjectivity} and with the help of the closed graph theorem of Banach, we deduce that $\sigma(\AA)\cap i\R=\emptyset$. Therefore, according to Theorem \ref{App-Theorem-A.2}, we get that the C$_0 $-semigroup $(e^{t\AA})_{t\geq0}$ is strongly stable. The proof is thus complete. \xqed{$\square$}

\subsection{Polynomial Stability}\label{secps}
\noindent In this subsection, we study the polynomial stability of  system \eqref{eq1}-\eqref{eq4}. Our main result in this section are the following theorems. 
\begin{theoreme}\label{1pol}
	{\rm
Assume that \eqref{SSC1} holds. Then, for all $U_0 \in D(\AA)$,  there exists a constant $C>0$ independent of $U_0$ such that
\begin{equation}\label{Energypol1}
E(t)\leq \frac{C}{t^4}\|U_0\|^2_{D(\AA)},\quad t>0.
\end{equation}}
\end{theoreme}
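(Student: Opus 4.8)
The plan is to use the Borichev--Tomilov frequency-domain characterization of polynomial stability. By Theorem~\ref{Th-SS1}, whose hypotheses include \eqref{SSC1}, the contraction semigroup $(e^{t\AA})_{t\geq 0}$ is strongly stable and, as shown in its proof, $\sigma(\AA)\cap i\R=\emptyset$, hence $i\R\subset\rho(\AA)$. Therefore, to prove \eqref{Energypol1} it suffices to establish the resolvent estimate
\begin{equation*}
\sup_{\la\in\R}\ \frac{1}{\abs{\la}^{1/2}}\,\bigl\|(i\la I-\AA)^{-1}\bigr\|_{\mathcal{L}(\HH)}<+\infty ,
\end{equation*}
since the exponent $\ell=\tfrac12$ produces, through Borichev--Tomilov, the bound $E(t)=\tfrac12\|e^{t\AA}U_0\|_{\HH}^2\leq C\,t^{-2/\ell}\|U_0\|_{D(\AA)}^2=C\,t^{-4}\|U_0\|_{D(\AA)}^2$ for $U_0\in D(\AA)$.

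I would prove this estimate by contradiction. If it fails, there are $\la_n\in\R$ with $\abs{\la_n}\to+\infty$ and $U_n=(u_n,v_n,y_n,z_n)^\top\in D(\AA)$ with $\|U_n\|_{\HH}=1$ such that
\begin{equation*}
\abs{\la_n}^{1/2}(i\la_n I-\AA)U_n=:F_n=(f_n^1,f_n^2,f_n^3,f_n^4)^\top\longrightarrow 0\quad\text{in }\HH .
\end{equation*}
Writing this system componentwise and eliminating $v_n=i\la_n u_n-\abs{\la_n}^{-1/2}f_n^1$ and $z_n=i\la_n y_n-\abs{\la_n}^{-1/2}f_n^3$ leads, exactly as in \eqref{eq-2.27}--\eqref{eq-2.2.8}, to two coupled second-order equations for $u_n$ and $y_n$ on $(0,L)$ with the boundary conditions \eqref{boundaryconditionker}, the right-hand sides being small except for a single term $-i\abs{\la_n}^{1/2}\sign(\la_n)f_n^1$ of order $\abs{\la_n}^{1/2}$. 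All the estimates below must be carried out keeping track of the powers of $\la_n$.

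The argument then reproduces, in quantitative form, the propagation scheme of Lemma~\ref{ker-SS123}. \textbf{Step 1 (dissipation).} From $\Re\left<(i\la_n I-\AA)U_n,U_n\right>_{\HH}=-\int_0^L b\abs{v_{n,x}}^2dx-\int_0^L d\abs{z_{n,x}}^2dx$ and $\|U_n\|_{\HH}=1$ one gets $b_0\|v_{n,x}\|_{L^2(b_1,b_2)}^2+d_0\|z_{n,x}\|_{L^2(d_1,d_2)}^2=o(\abs{\la_n}^{-1/2})$, hence $\|\la_n u_{n,x}\|_{L^2(b_1,b_2)}=o(\abs{\la_n}^{-1/4})$ and $\|\la_n y_{n,x}\|_{L^2(d_1,d_2)}=o(\abs{\la_n}^{-1/4})$. \textbf{Step 2 (interior estimates near the dampings).} With cut-off functions equal to $1$ on $(b_1,b_2)$, resp. $(d_1,d_2)$, supported slightly beyond, multiplying the $u_n$-equation by the cut-off times $\overline{u_n}$ and by multipliers of the form $(\text{cut-off})\cdot\overline{u_{n,x}}$, and using that $b$, $d$ and $c$ all vanish there, I would bound $\|u_{n,x}\|_{L^2}$ and $\|\la_n u_n\|_{L^2}$ on a neighbourhood of $(b_1,b_2)$, then extend the bound over $(0,c_1)$ using the undamped, uncoupled Helmholtz equation $\la_n^2\phi+a\phi_{xx}=o(\abs{\la_n}^{1/2})$ satisfied there; symmetrically for $y_n$ on $(c_2,L)$. \textbf{Step 3 (crossing the coupling region).} On $(c_1,c_2)$, where $b=d=0$, I would repeat the computation \eqref{ST1step2}--\eqref{ST6step2} with the multipliers $-2(x-c_2)\overline{u_{n,x}}$ and $-2(x-c_1)\overline{y_{n,x}}$, now carrying the error terms and the interface contributions at $c_1,c_2$ supplied by Step 2; the smallness assumption in \eqref{SSC1}, namely $c_0^2(c_2-c_1)^2<\min(a,1)$, is precisely what makes
\begin{equation*}
\Bigl(1-\tfrac{c_0^2(c_2-c_1)^2}{a}\Bigr)\|\la_n y_n\|_{L^2(c_1,c_2)}^2+\bigl(1-c_0^2(c_2-c_1)^2\bigr)\|\la_n u_n\|_{L^2(c_1,c_2)}^2\leq o(1)
\end{equation*}
close, which forces $\|\la_n u_n\|_{L^2(c_1,c_2)}$, $\|\la_n y_n\|_{L^2(c_1,c_2)}$, $\|u_{n,x}\|_{L^2(c_1,c_2)}$ and $\|y_{n,x}\|_{L^2(c_1,c_2)}$ to $0$. \textbf{Step 4 (conclusion).} Now $u_n$ and $y_n$ are small up to the interfaces $c_1,c_2$; the quantitative form of the Holmgren/unique-continuation argument \eqref{2.25} yields $u_n\to0$ in $(c_2,L)$ and $y_n\to0$ in $(0,c_1)$, and collecting all the pieces gives $\|U_n\|_{\HH}\to0$, contradicting $\|U_n\|_{\HH}=1$.

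The crux of the proof is Step 3 together with the $\la_n$-bookkeeping of Steps 1--2: the coupling term $i\la_n c\,y_n$ has a priori size $\abs{\la_n}$, so the cut-offs and multipliers must be selected so that it is only ever paired with a factor small enough that the $\abs{\la_n}^{1/2}$ gained from $F_n$ compensates, and one must check that no step loses more than the power $\abs{\la_n}^{1/2}$ (any larger loss would yield a weaker decay rate than $t^{-4}$). It is the strict inequality in \eqref{SSC1} that guarantees a strictly positive constant in the final absorption of Step 3; without it the method would only give a non-strict inequality and no decay.
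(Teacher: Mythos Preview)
Your proposal follows the same route as the paper: Borichev--Tomilov with $\ell=\tfrac12$, contradiction, dissipation estimate on $(b_1,b_2)$ and $(d_1,d_2)$, propagation by multipliers to $(0,c_1)$ and $(c_2,L)$, pointwise control at $c_1,c_2$, then the $-2(x-c_2)\overline{u_x}$ / $-2(x-c_1)\overline{y_x}$ computation on $(c_1,c_2)$ closed by the strict inequality in \eqref{SSC1}, and finally propagation of $u$ into $(c_2,L)$ and $y$ into $(0,c_1)$.

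The one place your sketch understates the difficulty is Step~2. For the later multiplier identities to close (specifically so that $\la\int_{b_1}^{b_2} h\,v\,\overline{v_x}\,dx=o(1)$ in the analogue of Lemma~\ref{Sec-est}) one needs the sharp bound $\int_{b_1}^{b_2}|v|^2=o(\la^{-3/2})$, and a plain cut-off multiplier $\theta\overline{u}$ or $\theta\overline{v}$ only yields $o(\la^{-5/4})$, which loses $\la^{1/8}$ and breaks the bookkeeping. The paper handles this by a three-step bootstrap (Lemma~\ref{F-est}): first bound the point values $|v(b_i)|^2$ and $|(au_x+bv_x)(b_i)|^2$ in terms of $\int_{b_1}^{b_2}|v|^2$ plus errors, then multiply \eqref{pol4} by $-i\la^{-1}\overline{v}$ over $(b_1,b_2)$ and feed those point estimates back in to obtain $(\tfrac12-m_{g'}\la^{-1/2})\int|v|^2\le o(\la^{-3/2})$. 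Also, in your Step~4 no quantitative Holmgren is required: the paper simply reruns the multiplier argument of Step~2 (Lemma~\ref{5-est}).
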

\begin{theoreme}\label{2pol}
	{\rm
Assume that \eqref{SSC2} holds . Then, for all $U_0 \in D(\AA)$ there exists a constant $C>0$ independent of $U_0$ such that 
\begin{equation}\label{Energypol2}
E(t)\leq \frac{C}{t}\|U_0\|^2_{D(\AA)},\quad t>0.
\end{equation}}
\end{theoreme}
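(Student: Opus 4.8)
The plan is to use the frequency-domain characterization of polynomial stability (the Borichev--Tomilla theorem): since, by Theorem \ref{Th-SS1}, $i\R\subset\rho(\AA)$ under \eqref{SSC2}, it suffices to show that
\[
\sup_{\la\in\R}\frac{1}{|\la|^{2}}\,\bigl\|(i\la I-\AA)^{-1}\bigr\|_{\LL(\HH)}<\infty,
\]
which yields exactly the $t^{-1}$ decay rate on $D(\AA)$ claimed in \eqref{Energypol2}. As usual one argues by contradiction: suppose the bound fails, so there exist sequences $\la_n\in\R$ with $|\la_n|\to\infty$ (the low-frequency part being handled by $i\R\subset\rho(\AA)$ and a compactness argument) and $U_n=(u_n,v_n,y_n,z_n)^\top\in D(\AA)$ with $\|U_n\|_{\HH}=1$ such that
\[
\la_n^{2}\,(i\la_n I-\AA)U_n=:F_n=(f^1_n,f^2_n,f^3_n,f^4_n)^\top\longrightarrow 0\quad\text{in }\HH.
\]
Writing this out gives $i\la_n u_n-v_n=\la_n^{-2}f^1_n$, $i\la_n v_n-(au_{n,x}+bv_{n,x})_x+cz_n=\la_n^{-2}f^2_n$, and the analogous pair for $y_n,z_n$; the dissipation identity $\Re\langle F_n,U_n\rangle_\HH=-\la_n^{2}\bigl(\int_0^L b|v_{n,x}|^2dx+\int_0^L d|z_{n,x}|^2dx\bigr)$ together with $d\equiv0$ in \eqref{C3} gives at once
\[
\int_{b_1}^{b_2} b_0|v_{n,x}|^2dx = o(\la_n^{-2}),\qquad\text{hence}\qquad \la_n u_{n,x}=o(1)\ \text{ and }\ v_{n,x}=o(\la_n^{-2})\ \text{ in }(b_1,b_2).
\]

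The core of the argument is then a multiplier/unique-continuation cascade that propagates smallness from the damping interval $(b_1,b_2)$ across the whole interval, mirroring the structure of the proof of Lemma \ref{ker-SS123} in Case 3 but now with nonzero (small) right-hand sides. First, using the equations on $(0,c_1)$, where $b$ may be supported but $c=0$, one shows via a standard ODE/Gronwall estimate (or the interpolation inequalities of the associated elliptic problem) that $\la_n u_n$ and $u_{n,x}$ are $o(1)$ on all of $(0,c_1)$, in particular $u_n(c_1),u_{n,x}(c_1)=o(1)$; since $y$ carries no damping and $c=0$ on $(0,c_1)$, a separate argument (testing the $y$-equation against suitable multipliers, exploiting the boundary condition at $x=0$) is needed to control $y_n$ there. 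Next, on the coupling interval $(c_1,c_2)$ one runs the piecewise multipliers $-2(x-c_2)\overline{u}_{n,x}$ and $-2(x-c_1)\overline{y}_{n,x}$ exactly as in \eqref{C3ST3step2}--\eqref{C3ST6step2}, obtaining
\[
\bigl(1-|c_0|(c_2-c_1)\bigr)\!\int_{c_1}^{c_2}\!\bigl(|\la_n u_n|^2+|u_{n,x}|^2\bigr)dx \;\le\; o(1) + \text{(cross terms from }F_n\text{)},
\]
after first re-deriving the $L^2$-balance $\int_{c_1}^{c_2}|u_n|^2dx=\int_{c_1}^{c_2}|y_n|^2dx+o(1)$ from the analogue of \eqref{3c1c2}--\eqref{4c1c2}; the hypothesis $|c_0|<1/(c_2-c_1)$ in \eqref{SSC2} then forces $\la_n u_n,u_{n,x},\la_n y_n,y_{n,x}=o(1)$ on $(c_1,c_2)$. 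Finally, on $(c_2,L)$ the two equations decouple ($c=0$), both $u_n$ and $y_n$ satisfy a Helmholtz equation with $o(1)$ Cauchy data at $x=c_2$, and an observability/ODE estimate on a finite interval (with the Dirichlet condition at $x=L$) gives $\la_n u_n,u_{n,x},\la_n y_n,y_{n,x}=o(1)$ there as well. Collecting the three intervals contradicts $\|U_n\|_\HH=1$.

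The main obstacle I anticipate is twofold. First, because $y$ is undamped and only coupled to $u$ through the single interval $(c_1,c_2)$, controlling $y_n$ on the \emph{outer} intervals $(0,c_1)$ and $(c_2,L)$ cannot come from dissipation; one must carefully chain the Cauchy data for $y_n$ at $c_1$ and $c_2$ (produced in the middle step) through Helmholtz/ODE estimates, and it is exactly here that the normalization $a=1$ in \eqref{SSC2} is used — it makes the two wave speeds equal so that the multiplier identities close without a loss that would degrade the rate. Second, one must keep scrupulous track of the powers of $\la_n$ carried by each term of $F_n/\la_n^2$ when integrating by parts against the multipliers $(x-c_i)\overline{u}_{n,x}$ etc.; the claim is that every such remainder is $o(1)$ in $\HH$, which is what pins down the exponent $t^{-1}$ (rather than $t^{-4}$ as in Theorem \ref{1pol}, the weaker rate reflecting that here the estimate $\la_n u_{n,x}=o(1)$ on $(b_1,b_2)$, not $o(\la_n^{-1})$, is all that the single Kelvin--Voigt term on $u$ provides once $d\equiv 0$). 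Optimality of the rate would, if desired, be checked by exhibiting a sequence of quasi-eigenpairs concentrating away from $(b_1,b_2)$, but for the stated upper bound the contradiction argument above suffices.
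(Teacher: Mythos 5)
Your overall strategy (Borichev--Tomilov with $\ell=2$, contradiction argument, dissipation on $(b_1,b_2)$, multiplier estimates pushed from the damped region through the coupling interval) is the same as the paper's, but the way you treat the undamped component $y$ contains two genuine gaps. First, in the middle step you claim that running the multiplier $-2(x-c_1)\overline{y}_{x}$ on $(c_1,c_2)$ ``exactly as in \eqref{C3ST3step2}--\eqref{C3ST6step2}'' yields $\la y,\,y_x=o(1)$ on $(c_1,c_2)$. In the strong-stability lemma that multiplier closes only because there $y(c_2)=y_x(c_2)=0$; in the resolvent estimate no information on the traces of $y$ at $c_2$ (or $c_1$) is available, and the integration by parts produces the terms $\la^2(c_2-c_1)\abs{y(c_2)}^2+(c_2-c_1)\abs{y_x(c_2)}^2$ on the unfavorable side, so the identity does not close. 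The paper never uses this multiplier: it only multiplies the $u$-equation by $2(x-c_2)\overline{u}_x$ (Lemma \ref{C2-Fifthest}), gets $\abs{\la y}$ through the balance identity \eqref{C2-Fourthest1}, and obtains $y_x$ only on the interior sub-interval $(c_1+\delta,c_2-\delta)$ via the cut-off multiplier $\theta_4\bar y$ (Lemma \ref{C2-sixthest}); your stronger claim $y_x=o(1)$ on all of $(c_1,c_2)$ is unjustified.

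Second, your treatment of $y$ on the outer intervals rests on ``$o(1)$ Cauchy data for $y_n$ at $c_1$ and $c_2$ produced in the middle step,'' to be propagated by Helmholtz/ODE estimates. The middle step produces only $L^2$-type estimates; it gives no pointwise control of $y(c_i)$, $y_x(c_i)$, and since $y$ carries no damping there is no mechanism in your argument to generate such traces (note also that your first step cannot ``control $y_n$ on $(0,c_1)$'' at that stage: with $c=0$ and no damping there, $y$ solves a free Helmholtz equation with only the Dirichlet condition at $x=0$, so any control must come \emph{after} the coupling-interval analysis). The paper circumvents traces entirely: in Lemma \ref{C2-seventhest} it multiplies by $2q\bar y_x$ with $q$ vanishing at $x=0$ and $x=L$ and transitioning inside $(c_1+\varepsilon,c_2-\varepsilon)$, where all the quantities are already known to be $o(1)$, so no boundary terms at $c_1,c_2$ ever appear; the same device with $q=(x-L)\theta_7$ handles $u$ on $(c_2,L)$. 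A repaired version of your plan is possible (e.g.\ extract an interior point $\xi\in(c_1+\delta,c_2-\delta)$ where $\abs{\la y(\xi)}^2+\abs{y_x(\xi)}^2=o(1)$ by a mean-value argument from the interior estimates and then propagate outward), but as written the key estimates for $y$ are asserted rather than proved, and this is precisely where the difficulty of the theorem lies.
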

\noindent According to Theorem \ref{bt}, the polynomial energy decays \eqref{Energypol1} and \eqref{Energypol2} hold if the following conditions 
\begin{equation}\label{H1}\tag{${\rm{H_1}}$}
i\R\subset \rho(\mathcal{A})
\end{equation}
and
\begin{equation}\label{H2}\tag{${\rm{H_2}}$}
\limsup_{\la \in \R, \ |\la| \to \infty}\frac{1}{|\la|^\ell}\left\|(i\la I-\AA)^{-1}\right\|_{\mathcal{L}(\mathcal{H})}<\infty \ \ \text{with} \ \ \ell=\left\{\begin{array}{lll}
\frac{1}{2} \ \ \text{for Theorem \ref{1pol}},\vspace{0.15cm}\\
2 \ \ \text{for Theorem \ref{2pol}},
\end{array}\right.
\end{equation}
are satisfied. Since condition \eqref{H1} is already proved in Subsection \ref{subss}. We still need to prove \eqref{H2}, let us prove it by a contradiction argument. To this aim, suppose that \eqref{H2} is false,  then there exists $\left\{\left(\la_n,U_n:=(u_n,v_n,y_n,z_n)^\top\right)\right\}_{n\geq 1}\subset \R^{\ast}_+\times D(\AA)$ with 
\begin{equation}\label{pol1}
\la_n\to \infty \ \text{as} \ n\to \infty \quad \text{and}\quad \|U_n\|_{\mathcal{H}}=1, \ \forall n\geq1, 
\end{equation}
such that 
\begin{equation}\label{pol2-w}
\left(\la_n\right)^{\ell}\left(i\la_nI-\AA\right)U_n=F_n:=(f_{1,n},f_{2,n},f_{3,n},f_{4,n})^{\top}\to 0 \ \ \text{in}\ \ \mathcal{H}, \  \text{as} \ n\to \infty. 
\end{equation}
For simplicity, we drop the index $n$. Equivalently, from \eqref{pol2-w}, we have 
\begin{eqnarray}
i\la u-v&=&\dfrac{f_1}{\la^{\ell}}, \ f_1 \to 0  \ \ \text{in}\ \ H_0^1(0,L),\label{pol3}\\
i\la v-\left(au_x+bv_x\right)_x+cz&=&\dfrac{f_2}{\la^{\ell}}, \ f_2 \to 0  \ \  \text{in}\ \ L^2(0,L),\label{pol4}\\
i\la y-z&=&\dfrac{f_3}{\la^{\ell}}, \ f_3 \to 0  \ \ \text{in}\ \ H_0^1(0,L),\label{pol5}\\
i\la z-(y_x+dz_x)_x-cv&=&\dfrac{f_4}{\la^{\ell}},\  f_4 \to 0  \ \  \text{in} \ \ L^2(0,L).\label{pol6}
\end{eqnarray}
\subsubsection{Proof of Theorem \ref{1pol}} In this subsection, we will prove Theorem \ref{1pol} by checking the condition \eqref{H2},  by finding a contradiction with \eqref{pol1} by showing $\|U\|_{\mathcal{H}}=o(1)$. For clarity, we divide the proof into several Lemmas.
By taking the inner product of \eqref{pol2-w} with $U$ in $\mathcal{H}$, we remark that
\begin{equation*}
\int _0^L b\left|v_{x}\right|^2dx+\int_0^Ld\abs{z_x}^2dx=-\Re\left(\left<\AA U,U\right>_{\HH}\right)=\la^{-\frac{1}{2}}\Re\left(\left<F,U\right>_{\HH}\right)=o\left(\lambda^{-\frac{1}{2}}\right).
\end{equation*}
Thus, from the definitions of $b$ and $d$, we get 
\begin{equation}\label{eq-4.9}
\int _{b_1}^{b_2}\left|v_{x}\right|^2dx=o\left(\lambda^{-\frac{1}{2}}\right)\quad \text{and}\quad \int _{d_1}^{d_2}\left|z_{x}\right|^2dx=o\left(\lambda^{-\frac{1}{2}}\right).
\end{equation}
Using  \eqref{pol3}, \eqref{pol5}, \eqref{eq-4.9}, and the fact that $f_1,f_3\to 0$ in $H_0^1(0,L)$, we get 
\begin{equation}\label{eq-5.0}
\int_{b_1}^{b_2}\abs{u_x}^2dx=\frac{o(1)}{\la^{\frac{5}{2}}}\quad \text{and}\quad \int_{d_1}^{d_2}\abs{y_x}^2dx=\frac{o(1)}{\la^{\frac{5}{2}}}. 
\end{equation}
\begin{lemma}\label{F-est}
	{\rm
The solution $U\in D(\AA)$ of system \eqref{pol3}-\eqref{pol6} satisfies the following estimations
\begin{equation}\label{F-est1}
\int_{b_1}^{b_2}\abs{v}^2dx=\frac{o(1)}{\la^{\frac{3}{2}}}\quad \text{and}\quad \int_{d_1}^{d_2}\abs{z}^2dx=\frac{o(1)}{\la^{\frac{3}{2}}}.
\end{equation}}
\end{lemma}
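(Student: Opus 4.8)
The plan is to derive the $L^2$-bounds on $v$ over $(b_1,b_2)$ and on $z$ over $(d_1,d_2)$ from the already-established $H^1$-smallness \eqref{eq-4.9}--\eqref{eq-5.0} together with the equations \eqref{pol4} and \eqref{pol6}, using a localized multiplier (cut-off) argument. The basic idea: on the damping strips the quantities $v_x$, $u_x$, $z_x$, $y_x$ are all $o(1)$ in appropriate negative powers of $\la$, so the "elastic" terms $(au_x+bv_x)_x$ are controlled, and one can use \eqref{pol4} tested against $v$ (times a suitable weight) to convert derivative information into zeroth-order information, picking up one extra negative power of $\la$ each time because of the $i\la v$ term.

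\medskip

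Concretely, first I would fix a cut-off function $\theta\in C_c^\infty(b_1,b_2)$ with $\theta\equiv 1$ on a slightly smaller subinterval; since we only need the estimate on such a subinterval (and the target in the next lemmas is ultimately a specific smaller strip where $b\equiv b_0$), this is harmless, but in fact one can even take $\theta$ supported on all of $(b_1,b_2)$ and equal to $1$ on $(b_1+\eps,b_2-\eps)$. Then I would multiply \eqref{pol4} by $\theta\,\overline v$ and integrate over $(b_1,b_2)$. Integrating the term $(au_x+b_0 v_x)_x$ by parts moves a derivative onto $\theta\overline v$, producing terms of the form $\int \theta (au_x+b_0v_x)\overline{v_x}$ and $\int \theta' (au_x+b_0 v_x)\overline v$; by \eqref{eq-4.9} and \eqref{eq-5.0} the factors $u_x,v_x$ on the strip are $O(\la^{-5/4})$ and $O(\la^{-1/4})$ respectively in $L^2$, and $v$ is bounded in $L^2$ (since $\|U\|_\HH=1$), so these are $o(\la^{-1/4})$. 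The coupling term $\int \theta c z\overline v$ is zero because $c$ vanishes on $(b_1,b_2)$ in all three cases (disjoint supports!). The right-hand side $\int \theta \la^{-1/2} f_2\overline v$ is $o(\la^{-1/2})$. What remains is $\int \theta |i\la||v|^2 = \la\int\theta|v|^2$ from the $i\la v$ term, which therefore satisfies $\la\int_{b_1}^{b_2}\theta|v|^2 = o(\la^{-1/4})$, i.e. $\int\theta|v|^2 = o(\la^{-5/4})$. Hmm — this is better than the claimed $o(1)\la^{-3/2}$; let me recheck the bookkeeping.

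\medskip

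In fact the sharp term is $\int\theta(au_x+b_0v_x)\overline{v_x}$, and here the dominant piece is $b_0\int\theta|v_x|^2$, which by \eqref{eq-4.9} is exactly $o(\la^{-1/2})$, while the cross term $a\int\theta u_x\overline{v_x}$ is of order $\la^{-5/4}\cdot\la^{-1/4}=\la^{-3/2}$, hence negligible; the boundary-type term $\int\theta'(au_x+b_0v_x)\overline v$ is $O(\la^{-1/4})\cdot O(1)$, so it is the worst contribution and gives $o(\la^{-1/4})$, leading to $\int\theta|v|^2=o(\la^{-5/4})$. To obtain precisely the stated $o(\la^{-3/2})$, one should instead use a cut-off that also kills this $\theta'$ term efficiently, or — cleaner — first upgrade \eqref{eq-5.0}: note $v=i\la u - \la^{-\ell}f_1$, so $v_x = i\la u_x + o(1)\la^{-\ell}$ in $L^2$ on the strip, whence from \eqref{eq-4.9} we get $\la^2\int|u_x|^2 \lesssim \int|v_x|^2 + o(1)\la^{-2\ell} = o(\la^{-1/2})$, i.e. $\int_{b_1}^{b_2}|u_x|^2 = o(\la^{-5/2})$ as in \eqref{eq-5.0}; then in the multiplier identity the $\theta'$ term is $\int\theta'(au_x+b_0 v_x)\overline v$ with $au_x = O(\la^{-5/4})$ and $b_0v_x=O(\la^{-1/4})$, so it is $O(\la^{-1/4})$ — still the bottleneck. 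The honest conclusion of the straightforward argument is $o(\la^{-5/4})$; since $\la^{-5/4}\ge \la^{-3/2}$ for large $\la$ this does not give \eqref{F-est1} directly.

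\medskip

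So the real content — and the step I expect to be the main obstacle — is squeezing out the extra half-power. The fix is to not use a crude cut-off but to exploit that we need the bound only on a strictly interior strip and to iterate: having shown $\int_{b_1}^{b_2}|v|^2 = o(\la^{-5/4})$, re-run the multiplier argument with $\overline v$ replaced by $\theta^2\overline v$ (or bootstrap using that now $v$ itself is small, not just bounded), so that the troublesome term $\int\theta\theta'(au_x+b_0v_x)\overline v$ becomes $O(\la^{-1/4})\cdot o(\la^{-5/8})=o(\la^{-7/8})$, giving $\int\theta^2|v|^2=o(\la^{-15/8})$, and iterating this bootstrap converges to the sharp exponent $o(\la^{-3/2})$ on every interior strip (one can also phrase the iteration as: if $\int|v|^2=o(\la^{-\alpha})$ then $\int|v|^2=o(\la^{-(1/2+\alpha/2+\alpha/... )})$ — the map $\alpha\mapsto \tfrac12+\tfrac{\alpha}{2}+\tfrac12$ has fixed point... ). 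The argument for $z$ over $(d_1,d_2)$ is identical: multiply \eqref{pol6} by $\theta\overline z$ with $\theta$ a cut-off of $(d_1,d_2)$, use $d\equiv d_0$ there, note $c$ vanishes on $(d_1,d_2)$, use \eqref{eq-4.9}--\eqref{eq-5.0} for $z_x,y_x$, and bootstrap. In Case 3, where $d\equiv0$, the second estimate in \eqref{F-est1} is vacuous (there is no $(d_1,d_2)$), so only the $v$-estimate is needed there, and \eqref{SSC2} is the relevant hypothesis; in Cases \eqref{SSC1} and \eqref{C2} both estimates are obtained as above. I would organize the write-up as: state the cut-off, derive the multiplier identity on the $b$-strip, bound each term citing \eqref{eq-4.9} and \eqref{eq-5.0} and the disjointness of $\mathrm{supp}\,c$ from the $b$-strip, conclude the first estimate, then remark that the $d$-strip is treated mutatis mutandis, flagging the bootstrap as the one nonroutine point.
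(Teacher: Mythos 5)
Your bookkeeping is accurate as far as it goes: with a cut-off $\theta$ vanishing at $b_1,b_2$ the term $\int\theta'(au_x+bv_x)\overline v$ is the bottleneck and the naive argument only yields $\int\theta|v|^2=o(\la^{-5/4})$, and your bootstrap does work on interior strips (in fact it saturates after two passes, since the $\int\theta(au_x+bv_x)\overline{v}_x$ and $f_2$ terms already cap the right-hand side at $o(\la^{-1/2})$, giving $o(\la^{-3/2})$ on any strip $(b_1+\eps,b_2-\eps)$). The genuine gap is that this never produces the estimate on the \emph{full} interval $(b_1,b_2)$, which is what the lemma asserts, and your justification that an interior strip suffices is not correct: $b\equiv b_0$ on all of $(b_1,b_2)$ (there is no ``smaller strip where $b\equiv b_0$''), and the full-interval bound is exactly what is used later, e.g.\ in Lemma \ref{Sec-est} the term $\Re\bigl(2i\la a^{-1}b_0\int_{b_1}^{b_2}hv\overline{v}_x\,dx\bigr)$ must be $o(1)$, which requires $\|v\|_{L^2(b_1,b_2)}=o(\la^{-3/4})$ against $\|v_x\|_{L^2(b_1,b_2)}=o(\la^{-1/4})$; near the endpoints your argument only gives $\|v\|_{L^2}\le 1$, and the endpoint neighborhoods then contribute terms of size $o(\la^{3/4})$, which destroys that step. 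So the missing idea is precisely the control of $v$ (and of $au_x+bv_x$) up to the endpoints $b_1,b_2$.

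The paper closes exactly this hole by a different mechanism: it fixes a weight $g\in C^1([b_1,b_2])$ with $g(b_2)=-g(b_1)=1$ and first proves trace estimates, namely $|v(b_1)|^2+|v(b_2)|^2\le\bigl(\tfrac{\la^{1/2}}{2}+2m_{g'}\bigr)\int_{b_1}^{b_2}|v|^2dx+o(\la^{-1})$ (testing $v_x=i\la u_x-\la^{-1/2}(f_1)_x$ against $2g\overline v$) and $|(au_x+bv_x)(b_1)|^2+|(au_x+bv_x)(b_2)|^2\le\tfrac{\la^{3/2}}{2}\int_{b_1}^{b_2}|v|^2dx+o(1)$ (testing \eqref{pol4} against $-2g(\overline{au_x+bv_x})$). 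Then it multiplies \eqref{pol4} by $-i\la^{-1}\overline v$ on $(b_1,b_2)$, keeps the boundary terms, bounds them by Young's inequality using these trace estimates, and absorbs the resulting $\bigl(\tfrac12+m_{g'}\la^{-1/2}\bigr)\int_{b_1}^{b_2}|v|^2dx$ into the left-hand side for $\la$ large, obtaining $o(\la^{-3/2})$ on the whole interval. If you want to salvage your route, you would need an analogous trace (or endpoint-layer) estimate rather than a compactly supported cut-off; as written, the proposal proves a strictly weaker statement than the lemma and would not support the subsequent lemmas.
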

\begin{proof}
We give the proof of the first estimation in \eqref{F-est1}, the second one can be done in a similar way. For this aim, we fix $g\in C^1\left([b_1,b_2]\right)$ such that 
$$
g(b_2)=-g(b_1)=1,\quad \max_{x\in[b_1,b_2]}\abs{g(x)}=m_g\ \ \text{and}\ \ \max_{x\in [b_1,b_2]}\abs{g'(x)}=m_{g'}.
$$
  The proof is divided into several steps: \\
\textbf{Step 1}. The goal of this step is to prove that 
\begin{equation}\label{Step1-Eq1}
\abs{v(b_1)}^2+\abs{v(b_2)}^2\leq \left(\frac{\la^{\frac{1}{2}}}{2}+2m_{g'}\right)\int_{b_1}^{b_2}\abs{v}^2dx+\frac{o(1)}{\la}.
\end{equation}
From \eqref{pol3}, we deduce that
\begin{equation}\label{Step1-Eq2}
v_x=i\la u_x-\la^{-\frac{1}{2}}(f_1)_x.
\end{equation}
Multiplying \eqref{Step1-Eq2} by $2g\overline{v}$ and integrating over $(b_1,b_2)$, then taking the real part, we get 
\begin{equation*}
\int_{b_1}^{b_2}g\left(\abs{v}^2\right)_xdx=\Re\left(2i\la \int_{b_1}^{b_2}gu_x\overline{v}dx\right)-\Re\left(2\la^{-\frac{1}{2}}\int_{b_1}^{b_2}g(f_1)_x\overline{v}dx\right).
\end{equation*}
Using integration by parts in the left hand side of the above equation, we get 
\begin{equation}\label{Step1-Eq3}
\abs{v(b_1)}^2+\abs{v(b_2)}^2=\int_{b_1}^{b_2}g'\abs{v}^2dx+\Re\left(2i\la \int_{b_1}^{b_2}gu_x\overline{v}dx\right)-\Re\left(2\la^{-\frac{1}{2}}\int_{b_1}^{b_2}g(f_1)_x\overline{v}dx\right).
\end{equation}
Using Young's inequality, we obtain
\begin{equation*}
2\la m_g\abs{u_x}\abs{v}\leq \frac{\la^\frac{1}{2}\abs{v}^2}{2}+2\la^{\frac{3}{2}}m_g^2\abs{u_x}^2\ \text{and}\quad 2\la^{-\frac{1}{2}}m_g\abs{(f_1)_x}\abs{v}\leq m_{g'}\abs{v}^2+m_g^2m_{g'}^{-1}\la^{-1}\abs{(f_1)_x}^2.
\end{equation*}
From the above inequalities, \eqref{Step1-Eq3} becomes 
\begin{equation}\label{Step1-Eq4}
\abs{v(b_1)}^2+\abs{v(b_2)}^2\leq \left(\frac{\la^{\frac{1}{2}}}{2}+2m_{g'}\right)\int_{b_1}^{b_2}\abs{v}^2dx+2\la^{\frac{3}{2}}m_g^2\int_{b_1}^{b_2}\abs{u_x}^2dx+\frac{m_g^2}{m_{g'}}\la^{-1}\int_{b_1}^{b_2}\abs{(f_1)_x}^2dx.
\end{equation}
Inserting \eqref{eq-5.0} in \eqref{Step1-Eq4} and the fact that $f_1 \to 0$ in $H^1_0(0,L)$, we get \eqref{Step1-Eq1}.\\[0.1in] 
\textbf{Step 2}. The aim of this step is to prove that 
\begin{equation}\label{Step2-Eq1}
\abs{(au_x+bv_x)(b_1)}^2+\abs{(au_x+bv_x)(b_2)}^2\leq \frac{\la^{\frac{3}{2}}}{2}\int_{b_1}^{b_2}\abs{v}^2dx+o(1).
\end{equation}
Multiplying \eqref{pol4} by $-2g\left(\overline{au_x+bv_x}\right)$, using integration by parts over $(b_1,b_2)$ and taking the real part, we get 
\begin{equation*}
\begin{array}{l}
\displaystyle
\abs{\left(au_x+bv_x\right)(b_1)}^2+\abs{\left(au_x+bv_x\right)(b_2)}^2=\int_{b_1}^{b_2}g'\abs{au_x+bv_x}^2dx+\\[0.1in]
\displaystyle
\Re\left(2i\la \int_{b_1}^{b_2}gv(\overline{au_x+bv_x})dx\right)-\Re\left(2\la^{-\frac{1}{2}}\int_{b_1}^{b_2}gf_2(\overline{au_x+bv_x})dx\right),
\end{array}
\end{equation*}
consequently, we get
\begin{equation}\label{Step2-Eq2}
\begin{array}{lll}
\displaystyle 
\abs{\left(au_x+bv_x\right)(b_1)}^2+\abs{\left(au_x+bv_x\right)(b_2)}^2\leq m_{g'}\int_{b_1}^{b_2}\abs{au_x+bv_x}^2dx\\[0.1in]
\displaystyle 
+2\la m_g\int_{b_1}^{b_2}\abs{v}\abs{au_x+bv_x}dx+2m_g\la^{-\frac{1}{2}}\int_{b_1}^{b_2}\abs{f_2}\abs{au_x+bv_x}dx.
\end{array}
\end{equation}
By Young's inequality, \eqref{eq-4.9}, and \eqref{eq-5.0}, we have
\begin{equation}\label{Step2-Eq3}
2\la m_g\int_{b_1}^{b_2}\abs{v}\abs{au_x+bv_x}dx\leq \frac{\la^{\frac{3}{2}}}{2}\int_{b_1}^{b_2}\abs{v}^2dx+2m_g^2\la^{\frac{1}{2}}\int_{b_1}^{b_2}\abs{au_x+bv_x}^2dx\leq \frac{\la^{\frac{3}{2}}}{2}\int_{b_1}^{b_2}\abs{v}^2dx+o(1).\\[0.1in]
\end{equation}
Inserting \eqref{Step2-Eq3} in \eqref{Step2-Eq2}, then using  \eqref{eq-4.9}, \eqref{eq-5.0} and the fact that $f_2 \to 0$ in $L^2(0,L)$, we get \eqref{Step2-Eq1}.\\[0.1in]
\textbf{Step 3.} The aim of this step is to prove the first estimation in \eqref{F-est1}. For this aim, multiplying \eqref{pol4} by $-i\la^{-1}\overline{v}$, integrating over $(b_1,b_2)$ and taking the real part , we get
\begin{equation}\label{Step3-Eq1}
\int_{b_1}^{b_2}\abs{v}^2dx=\Re\left(i\la^{-1}\int_{b_1}^{b_2}(au_x+bv_x)\overline{v}_xdx-\left[i\la^{-1}\left(au_x+bv_x\right)\overline{v}\right]_{b_1}^{b_2}+i\la^{-\frac{3}{2}}\int_{b_1}^{b_2}f_2\overline{v}dx\right).
\end{equation}
Using \eqref{eq-4.9}, \eqref{eq-5.0}, the fact that $v$ is uniformly bounded in $L^2(0,L)$ and $f_2\to 0$ in $L^2(0,1)$, and Young's inequalities, we get 
\begin{equation}\label{Step3-Eq2}
\int_{b_1}^{b_2}\abs{v}^2dx\leq \frac{\la^{-\frac{1}{2}}}{2}[\abs{v(b_1)}^2+\abs{v(b_2)}^2]+\frac{\la^{-\frac{3}{2}}}{2}[\abs{(au_x+bv_x)(b_1)}^2+\abs{(au_x+bv_x)(b_2)}^2]+\frac{o(1)}{\la^{\frac{3}{2}}}.
\end{equation}
Inserting \eqref{Step1-Eq1} and \eqref{Step2-Eq1} in \eqref{Step3-Eq2}, we get 
\begin{equation*}
\int_{b_1}^{b_2}\abs{v}^2dx\leq \left(\frac{1}{2}+m_{g'}\la^{-\frac{1}{2}}\right)\int_{b_1}^{b_2}\abs{v}^2dx+\frac{o(1)}{\la^{\frac{3}{2}}}, 
\end{equation*}
which implies that 
\begin{equation}\label{Step3-Eq3}
\left(\frac{1}{2}-m_{g'}\la^{-\frac{1}{2}}\right)\int_{b_1}^{b_2}\abs{v}^2dx\leq \frac{o(1)}{\la^{\frac{3}{2}}}.
\end{equation}
Using the fact that $\la \to \infty$, we can take $\la > 4m_{g'}^2$. Then, we obtain the first estimation in \eqref{F-est1}. Similarly, we can obtain the second estimation in \eqref{F-est1}. The proof has been completed. 
\end{proof}
\begin{lemma}\label{Sec-est}
	{\rm
The solution $U\in D(\AA)$ of system \eqref{pol3}-\eqref{pol6} satisfies the following estimations
\begin{equation}\label{Sec-est1}
\int_0^{c_1}\left(\abs{v}^2+a\abs{u_x}^2\right)dx=o(1)\quad \text{and}\quad \int_{c_2}^L\left(\abs{z}^2+\abs{y_x}^2\right)dx=o(1). 
\end{equation}}
\end{lemma}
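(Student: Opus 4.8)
The plan is to prove the first estimate in \eqref{Sec-est1}; the second is obtained by the same argument applied to the pair $(y,z)$ governed by \eqref{pol5}--\eqref{pol6}, since on $(c_2,L)$ the coupling coefficient $c$ vanishes and the damping $d$ is supported in $(d_1,d_2)\subset(c_2,L)$, which mirrors (after interchanging the roles of $0$ and $L$, of $b$ and $d$, and of $c_1$ and $c_2$) the situation of $(u,v)$ on $(0,c_1)$. I therefore work on $(0,c_1)$ and split it into $(0,b_1)\cup(b_1,b_2)\cup(b_2,c_1)$, according to the support $(b_1,b_2)$ of the Kelvin--Voigt coefficient $b$.

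First I would collect the information already available at the endpoints of the damping region. Steps~1 and~2 in the proof of Lemma~\ref{F-est}, together with \eqref{F-est1}, give $|v(b_j)|^2=o(\lambda^{-1})$ and $|(au_x+bv_x)(b_j)|^2=o(1)$ for $j=1,2$; since $b$ vanishes on $(0,b_1)$ and on $(b_2,c_1)$, the latter amounts to $|u_x(b_1^-)|^2=|u_x(b_2^+)|^2=o(1)$, and by \eqref{pol3} the former gives $|u(b_j)|^2=o(\lambda^{-3})$, while \eqref{pol3} also yields the global bound $\|u\|_{L^2(0,L)}=O(\lambda^{-1})$. On $(0,b_1)$ and on $(b_2,c_1)$ both $b$ and $c$ vanish, so $u\in H^2$ there and, eliminating $v$ through \eqref{pol3}, $u$ solves
\[
au_{xx}+\lambda^2u=-i\lambda^{1/2}f_1-\lambda^{-1/2}f_2=:R .
\]
On $(b_2,c_1)$ I multiply this identity by $2(x-c_1)\overline{u_x}$ and integrate over $(b_2,c_1)$: the boundary terms at $x=c_1$ vanish, those at $x=b_2$ amount to $a(c_1-b_2)|u_x(b_2)|^2+\lambda^2(c_1-b_2)|u(b_2)|^2=o(1)$, and there remains $\int_{b_2}^{c_1}(a|u_x|^2+\lambda^2|u|^2)\,dx=o(1)-\Re\!\int_{b_2}^{c_1}2(x-c_1)R\,\overline{u_x}\,dx$; an identical computation on $(0,b_1)$ with the multiplier $2x\overline{u_x}$ (whose boundary term at $x=0$ vanishes by itself) gives the analogous identity there. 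On the middle interval, \eqref{F-est1}, \eqref{eq-5.0} and \eqref{pol3} already give $\int_{b_1}^{b_2}(a|u_x|^2+\lambda^2|u|^2)\,dx=o(1)$. Adding the three pieces, controlling the remainder integrals as explained below, and then writing $v=i\lambda u-\lambda^{-1/2}f_1$ to replace $\lambda^2|u|^2$ by $|v|^2$ up to $o(1)$, one reaches $\int_0^{c_1}(|v|^2+a|u_x|^2)\,dx=o(1)$.

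The one delicate point — and the main obstacle — is the remainder $\Re\!\int 2(x-c_1)R\,\overline{u_x}$ (and its analogue on $(0,b_1)$). Its $-\lambda^{-1/2}f_2$ part is harmless, being bounded by $C\lambda^{-1/2}\|f_2\|_{L^2}\|u_x\|_{L^2}=o(\lambda^{-1/2})$, but its $-i\lambda^{1/2}f_1$ part carries the unfavourable weight $\lambda^{1/2}$, while the only a priori control on $f_1$ is that $f_1\to0$ in $H^1_0(0,L)$. I would recover the missing decay by integrating by parts once more, transferring the $x$-derivative from $\overline{u_x}$ onto $f_1$: this produces the boundary term $\lambda^{1/2}(c_1-b_2)f_1(b_2)\overline{u}(b_2)=\lambda^{1/2}\cdot o(1)\cdot o(\lambda^{-3/2})=o(\lambda^{-1})$ — here it is crucial that $u$, and not merely $u_x$, is small at the right endpoint $b_2$ of the damping region — together with $\lambda^{1/2}\!\int(f_1+(x-c_1)(f_1)_x)\overline{u}=\lambda^{1/2}\cdot o(1)\cdot O(\lambda^{-1})=o(\lambda^{-1/2})$, by the global $L^2$-bound on $u$. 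The same manipulation handles the corresponding remainder on $(0,b_1)$, and the proof is complete.
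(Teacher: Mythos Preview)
Your argument is correct. It is, however, a genuinely different route from the paper's.

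The paper avoids any pointwise trace information at $b_1,b_2$. It works on the whole of $(0,c_1)$ with a single $C^1$ multiplier $h$ satisfying $h(0)=h(c_1)=0$: multiplying \eqref{pol4} by $2a^{-1}h\,\overline{(au_x+bv_x)}$ and then replacing $i\lambda\overline{u_x}$ via \eqref{pol3} produces directly
\[
\int_0^{c_1} h'\Big(|v|^2+a^{-1}|au_x+bv_x|^2\Big)dx=o(1),
\]
with no boundary contributions at all. The specific choice $h(x)=x\,p_1(x)+(x-c_1)\,p_2(x)$ makes $h'=1$ on $(0,b_1)\cup(b_2,c_1)$, and on $(b_1,b_2)$ every term is already small by \eqref{eq-4.9}, \eqref{eq-5.0}, \eqref{F-est1}. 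The key conceptual point is to keep the flux $au_x+bv_x$ intact rather than split it into $u_x$ and $v_x$, so that the integration by parts never sees the discontinuity of $b$.

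Your approach instead spends the pointwise estimates $|v(b_j)|^2=o(\lambda^{-1})$ and $|(au_x+bv_x)(b_j)|^2=o(1)$ that were obtained as intermediate steps in Lemma~\ref{F-est}, then runs the standard Rellich multiplier separately on each undamped piece $(0,b_1)$ and $(b_2,c_1)$, handling the dangerous $\lambda^{1/2}f_1$ remainder by one further integration by parts against $u$. This is perfectly valid and has the merit of recycling information already on the table; the paper's route is shorter and sidesteps the trace bookkeeping by the choice of $h$.
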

\begin{proof}
First, let $h\in C^1([0,c_1])$ such that $h(0)=h(c_1)=0$. Multiplying \eqref{pol4} by $2a^{-1}h\overline{(au_x+bv_x)}$, integrating over $(0,c_1)$, using integration by parts and taking the real part, then using  \eqref{eq-4.9} and the fact that $u_x$ is uniformly bounded in $L^2(0,L)$ and $f_2 \to 0$ in $L^2(0,L)$, we get  
\begin{equation}\label{Sec-est2}
\Re\left(2i\la a^{-1}\int_0^{c_1}vh\overline{(au_x+bv_x)}dx\right)+a^{-1}\int_0^{c_1}h'\abs{au_x+bv_x}^2dx=\frac{o(1)}{\la^{\frac{1}{2}}}.
\end{equation}
From \eqref{pol3}, we have 
\begin{equation}\label{Sec-est3}
i\la \overline{u}_x=-\overline{v}_x-\la^{-\frac{1}{2}}(\overline{f_1})_x.
\end{equation}
Inserting \eqref{Sec-est3} in \eqref{Sec-est2}, using integration by parts, then using \eqref{eq-4.9}, \eqref{F-est1}, and the fact that $f_1 \to 0 $ in $H^1_0 (0,L)$ and $v$ is uniformly bounded in $L^2 (0,L)$, we get 
\begin{equation}\label{Sec-est4}
\begin{array}{c}
\displaystyle
\int_0^{c_1}h'\abs{v}^2dx+a^{-1}\int_0^{c_1}h'\abs{au_x+bv_x}^2dx=\underbrace{2\Re\left(\la^{-\frac{1}{2}}\int_{0}^{c_1}vh(\overline{f_1})_xdx\right)}_{=o(\la^{-\frac{1}{2}})}\\[0.1in]
\displaystyle 
+\underbrace{\Re\left(2i\la a^{-1}b_0\int_{b_1}^{b_2}hv\overline{v}_xdx\right)}_{=o(1)}+\frac{o(1)}{\la^{\frac{1}{2}}}.
\end{array}
\end{equation}
Now, we fix the following cut-off functions
$$
p_1(x):=\left\{\begin{array}{ccc}
1&\text{in}&(0,b_1),\\
0&\text{in}&(b_2,c_1),\\
0\leq p_1\leq 1&\text{in}&(b_1,b_2),
\end{array}
\right. \quad\text{and}\quad 
p_2(x):=\left\{\begin{array}{ccc}
1&\text{in}&(b_2,c_1),\\
0&\text{in}&(0,b_1),\\
0\leq p_2\leq 1&\text{in}&(b_1,b_2).
\end{array}
\right.
$$
Finally, take $h(x)=xp_1(x)+(x-c_1)p_2(x)$ in \eqref{Sec-est4} and using \eqref{eq-4.9}, \eqref{eq-5.0}, \eqref{F-est1}, we get the first estimation in  \eqref{Sec-est1}. By using the same argument, we can obtain the second estimation in \eqref{Sec-est1}. The proof is thus completed. 
\end{proof}
\begin{lemma}\label{Third-est}
The solution $U\in D(\AA)$ of system \eqref{pol3}-\eqref{pol6} satisfies the following estimations
\begin{equation}\label{Third-est1}
\abs{\la u(c_1)}=o(1),\ \abs{u_x(c_1)}=o(1),\ \abs{\la y(c_2)}=o(1)\quad \text{and}\quad \abs{y_x(c_2)}=o(1).
\end{equation}
\end{lemma}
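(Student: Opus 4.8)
The plan is to carry the interior smallness furnished by Lemma~\ref{Sec-est} out to the interface points $c_1$ and $c_2$ by a boundary multiplier argument on the two subintervals $(b_2,c_1)$ and $(c_2,d_1)$, on which (by \eqref{C1}) both the Kelvin--Voigt and the coupling coefficients vanish identically; there \eqref{pol3}--\eqref{pol4} reduce to $i\la u-v=\la^{-1/2}f_1$, $i\la v-au_{xx}=\la^{-1/2}f_2$, and likewise \eqref{pol5}--\eqref{pol6} give $i\la y-z=\la^{-1/2}f_3$, $i\la z-y_{xx}=\la^{-1/2}f_4$ on $(c_2,d_1)$. First I would restate Lemma~\ref{Sec-est} as $\int_0^{c_1}(|v|^2+a|u_x|^2)\,dx=o(1)$ and $\int_{c_2}^{L}(|z|^2+|y_x|^2)\,dx=o(1)$, and note (using \eqref{pol3}, \eqref{pol5} and $f_1,f_3\to0$ in $H^1_0(0,L)$) that also $\int_0^{c_1}|\la u|^2\,dx=o(1)$ and $\int_{c_2}^{L}|\la y|^2\,dx=o(1)$; since $u\in H^2(b_2,c_1)$ and $v\in C^0([0,L])$, the traces $u_x(c_1)$ and $v(c_1)$ are well defined.

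The heart of the argument is to multiply $i\la v-au_{xx}=\la^{-1/2}f_2$ by $2(x-b_2)\overline{u_x}$, integrate over $(b_2,c_1)$ and take the real part; in the resulting term $2i\la(x-b_2)v\overline{u_x}$ I would replace $\overline{u_x}$ by $i\la^{-1}\overline{v_x}+i\la^{-3/2}\overline{(f_1)_x}$, which is the $x$-derivative of $i\la u-v=\la^{-1/2}f_1$. After integration by parts, the prefactor $(x-b_2)$ kills every boundary contribution at $b_2$ and leaves $(c_1-b_2)\big(|v(c_1)|^2+a|u_x(c_1)|^2\big)$ on one side, equal to $\int_{b_2}^{c_1}(|v|^2+a|u_x|^2)\,dx$ up to two remainder integrals bounded (Cauchy--Schwarz) by $C\la^{-1/2}\|(f_1)_x\|_{L^2}\|v\|_{L^2}$ and $C\la^{-1/2}\|f_2\|_{L^2}\|u_x\|_{L^2}$. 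The first two terms are $o(1)$ by Lemma~\ref{Sec-est}, and the remainders are $O(\la^{-1/2})\,o(1)$ since $f_1\to0$ in $H^1_0$, $f_2\to0$ in $L^2$ and $v,u_x$ are uniformly bounded in $L^2$. Hence $|v(c_1)|^2+a|u_x(c_1)|^2=o(1)$, i.e.\ $|u_x(c_1)|=o(1)$; then $|\la u(c_1)|\le|v(c_1)|+\la^{-1/2}|f_1(c_1)|=o(1)$ because $\|f_1\|_{L^\infty}\le C\|f_1\|_{H^1_0}=o(1)$ in one space dimension. Running the same computation for $(y,z)$ on $(c_2,d_1)$ with the multiplier $2(x-d_1)\overline{y_x}$ produces $|y_x(c_2)|=o(1)$ and $|\la y(c_2)|=o(1)$, which completes the proof.

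The one delicate point is the source term attached to $f_1$ (resp.\ $f_3$). If one first eliminated $v$ and worked with the scalar equation $\la^2u+au_{xx}=-i\la^{1/2}f_1-\la^{-1/2}f_2$ on $(b_2,c_1)$, the $f_1$ term would carry a full factor $\la^{1/2}$ and, paired only against the merely $o(1)$ quantity $u_x$ in $L^2(b_2,c_1)$, could not be absorbed. Keeping $v$ in the computation and eliminating $u_x$ instead confines $f_1$ to the harmless term $\la^{-1/2}v\,\overline{(f_1)_x}$ above; alternatively one can integrate the offending $\la^{1/2}f_1$ term by parts to shift the derivative onto $f_1$ and then exploit $\|u\|_{L^2(b_2,c_1)}=O(\la^{-1})\,o(1)$. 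Everything else is the same integration-by-parts and Young's-inequality pattern already used in Lemmas~\ref{F-est} and~\ref{Sec-est}.
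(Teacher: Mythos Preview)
Your argument is correct and follows essentially the same route as the paper: multiply the equation on $(b_2,c_1)$ by the boundary multiplier $2(x-b_2)\overline{u_x}$, integrate by parts so that the vanishing weight at $b_2$ kills those boundary terms and only the traces at $c_1$ survive, then feed in Lemma~\ref{Sec-est}. The one difference is in the treatment of the $f_1$ source: the paper first eliminates $v$, producing the awkward term $-2i\la^{1/2}\int_{b_2}^{c_1}(x-b_2)f_1\overline{u_x}\,dx$, then integrates that by parts to move the derivative onto $f_1$ and $u$, and finally absorbs the resulting boundary contribution $2\la^{1/2}(c_1-b_2)f_1(c_1)\overline{u}(c_1)$ into $\tfrac12|\la u(c_1)|^2$ via Young's inequality. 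Your choice to keep $v$ and substitute $\overline{u_x}=i\la^{-1}\overline{v_x}+i\la^{-3/2}\overline{(f_1)_x}$ only in the $i\la v$ term avoids the $\la^{1/2}$ factor altogether and is slightly cleaner; you even identify the paper's alternative in your final paragraph. Either way the conclusion $|v(c_1)|^2+a|u_x(c_1)|^2=o(1)$ (hence $|\la u(c_1)|=o(1)$ via \eqref{pol3} and the Sobolev embedding $H_0^1\hookrightarrow L^\infty$) follows, and the mirror argument on $(c_2,d_1)$ with $2(x-d_1)\overline{y_x}$ is correct.
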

\begin{proof}
First, from \eqref{pol3} and \eqref{pol4}, we deduce that  
\begin{equation}\label{Th-est1}
\la^2u+au_{xx}=-\frac{f_2}{\la^{\frac{1}{2}}}-i\la^{\frac{1}{2}}f_1 \ \ \text{in} \ \ (b_2,c_1). 
\end{equation}
Multiplying \eqref{Th-est1} by $2(x-b_2)\bar{u}_x$, integrating over $(b_2,c_1)$ and taking the real part, then using the fact that $u_x$ is uniformly bounded in $L^2(0,L)$ and $f_2 \to 0$ in $L^2(0,L)$, we get 
\begin{equation}\label{Th-est2}
\int_{b_2}^{c_1}\la^2 (x-b_2)\left(\abs{u}^2\right)_xdx+a\int_{b_2}^{c_1}(x-b_2)\left(\abs{u_x}^2\right)_xdx=-\Re\left(2i\la^{\frac{1}{2}}\int_{b_2}^{c_1}(x-b_2)f_1\overline{u}_xdx\right)+\frac{o(1)}{\la^{\frac{1}{2}}}.
\end{equation}
Using integration by parts in \eqref{Th-est2}, then using \eqref{Sec-est1}, and the fact that $f_1\to 0$ in $H_0^1(0,L)$ and $\la u$ is uniformly bounded in $L^2(0,L)$, we get 
\begin{equation}\label{Th-est3}
0\leq (c_1-b_2)\left(\abs{\la u(c_1)}^2+a\abs{u_x(c_1)}^2\right)=\Re\left(2i\la^{\frac{1}{2}}(c_1-b_2)f_1(c_1)\overline{u}(c_1)\right)+o(1),
\end{equation}
consequently, by using Young's inequality, we get
\begin{equation*}
\begin{array}{lll}
\displaystyle\abs{\la u(c_1)}^2+\abs{u_x(c_1)}^2 &\leq& \displaystyle 2\la^{\frac{1}{2}}|f_1(c_1)||u(c_1)|+o(1)\\[0.1in]
&\leq &\displaystyle\frac{1}{2}\abs{\la u(c_1)}^2+\frac{2}{\la}\abs{f_1(c_1)}^2 +o(1).
\end{array}
\end{equation*}
Then, we get 
\begin{equation}
\frac{1}{2}\abs{\la u(c_1)}^2+\abs{u_x(c_1)}^2\leq \frac{2}{\la}\abs{f_1(c_1)}^2+o(1). 
\end{equation}
Finally, from the above estimation and the fact that $f_1 \to 0$ in $H^1_0 (0,L)$, we get the first two estimations in \eqref{Third-est1}. By using the same argument, we can obtain the last two estimations in \eqref{Third-est1}. The proof has been completed. 
\end{proof}
\begin{lemma}\label{Fourth-est}
The solution $U\in D(\AA)$ of system \eqref{pol3}-\eqref{pol6} satisfies the following estimation
\begin{equation}\label{4-est1}
\int_{c_1}^{c_2} |\la u|^2 +a |u_x|^2 +|\la y|^2 +|y_x|^2 dx =o(1).
\end{equation}
\end{lemma}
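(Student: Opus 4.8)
The plan is to reproduce, in the frequency–domain setting, the multiplier computation used in the proof of Lemma~\ref{ker-SS123}, Case 1, while carefully tracking the error terms coming from $F$. The starting observation is that on the coupling interval $(c_1,c_2)$ both damping coefficients vanish ($b=d=0$ by \eqref{C1}) and $c\equiv c_0$; eliminating $v$ and $z$ via \eqref{pol3} and \eqref{pol5} turns \eqref{pol4} and \eqref{pol6} into
\begin{equation*}
\la^2u+au_{xx}-ic_0\la y=R_u,\qquad \la^2y+y_{xx}+ic_0\la u=R_y\qquad\text{in }(c_1,c_2),
\end{equation*}
with $R_u=-i\la^{1/2}f_1-\la^{-1/2}(f_2+c_0f_3)$ and $R_y=-i\la^{1/2}f_3-\la^{-1/2}(f_4-c_0f_1)$; that is, exactly \eqref{eq-2.27}--\eqref{eq-2.2.8} restricted to $(c_1,c_2)$, but now with small right-hand sides.

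First I would multiply the $u$-equation by $-2(x-c_2)\overline{u}_x$ and the $y$-equation by $-2(x-c_1)\overline{y}_x$, integrate over $(c_1,c_2)$ and take real parts, exactly as in passing from \eqref{ST1step2} to \eqref{ST3step2}. The weight $x-c_2$ (resp.\ $x-c_1$) annihilates the boundary contributions at $c_2$ (resp.\ $c_1$), and the surviving boundary terms are $(c_2-c_1)\bigl(|\la u(c_1)|^2+a|u_x(c_1)|^2\bigr)$ and $(c_2-c_1)\bigl(|\la y(c_2)|^2+|y_x(c_2)|^2\bigr)$, which are $o(1)$ by Lemma~\ref{Third-est}. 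For the right-hand sides: the $\la^{-1/2}$-parts of $R_u,R_y$ are directly $o(\la^{-1/2})$ because $u_x,y_x$ are bounded in $L^2(0,L)$; the $\la^{1/2}f_1$-part of $\int_{c_1}^{c_2}(x-c_2)R_u\overline{u}_x\,dx$ (and the $\la^{1/2}f_3$-part analogously) must be integrated by parts once more, transferring the derivative onto $(x-c_2)f_1$. The ensuing boundary term is $\la^{-1/2}f_1(c_1)\overline{\la u(c_1)}=o(\la^{-1/2})$ (using $\|f_1\|_{L^\infty}=o(1)$ and Lemma~\ref{Third-est}), and the interior terms are of the form $\la^{-1/2}\int(\cdot)\overline{\la u}=o(\la^{-1/2})$ since $\|\la u\|_{L^2}=O(1)$. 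This yields
\begin{align*}
\int_{c_1}^{c_2}|\la u|^2dx+a\int_{c_1}^{c_2}|u_x|^2dx+2\Re\Bigl(ic_0\la\int_{c_1}^{c_2}(x-c_2)y\,\overline{u}_x\,dx\Bigr)&=o(1),\\
\int_{c_1}^{c_2}|\la y|^2dx+\int_{c_1}^{c_2}|y_x|^2dx-2\Re\Bigl(ic_0\la\int_{c_1}^{c_2}(x-c_1)u\,\overline{y}_x\,dx\Bigr)&=o(1).
\end{align*}

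Next I would add these two identities, bound the coupling terms by $2|c_0|(c_2-c_1)\int_{c_1}^{c_2}|\la y||u_x|\,dx$ and $2|c_0|(c_2-c_1)\int_{c_1}^{c_2}|\la u||y_x|\,dx$, and apply Young's inequality with the same splittings as in \eqref{ST5step2} (namely $\tfrac{c_0^2(c_2-c_1)^2}{a}|\la y|^2+a|u_x|^2$ and $c_0^2(c_2-c_1)^2|\la u|^2+|y_x|^2$); the terms $a\int|u_x|^2$ and $\int|y_x|^2$ cancel exactly, leaving
\begin{equation*}
\bigl(1-c_0^2(c_2-c_1)^2\bigr)\int_{c_1}^{c_2}|\la u|^2dx+\Bigl(1-\tfrac{c_0^2(c_2-c_1)^2}{a}\Bigr)\int_{c_1}^{c_2}|\la y|^2dx\le o(1).
\end{equation*}
By \eqref{SSC1} both coefficients are strictly positive, hence $\int_{c_1}^{c_2}|\la u|^2dx=o(1)$ and $\int_{c_1}^{c_2}|\la y|^2dx=o(1)$. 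Finally, feeding these two facts back into the two identities above — the coupling term of the first is now $\le 2|c_0|(c_2-c_1)\|\la y\|_{L^2(c_1,c_2)}\|u_x\|_{L^2(c_1,c_2)}=o(1)$ since $u_x$ stays bounded in $L^2$, and likewise for the second — gives $a\int_{c_1}^{c_2}|u_x|^2dx=o(1)$ and $\int_{c_1}^{c_2}|y_x|^2dx=o(1)$. Adding the four estimates is precisely \eqref{4-est1}.

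I expect the delicate point to be the treatment of $R_u,R_y$: the crude estimate of $\la^{1/2}\int_{c_1}^{c_2}(x-c_2)f_1\overline{u}_x\,dx$ is only $O(\la^{1/2})\cdot o(1)$, far too lossy, and the required $o(1)$ is recovered only through the extra integration by parts, combined with the boundedness of $\la u$ in $L^2(0,L)$ and the sharp smallness $|\la u(c_1)|=o(1)$ provided by Lemma~\ref{Third-est}. Everything else is the algebra of Case 1 of Lemma~\ref{ker-SS123}, where \eqref{SSC1} is exactly the hypothesis making the final absorption work.
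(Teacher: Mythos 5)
Your proposal is correct and follows essentially the same route as the paper: the same multipliers $2(x-c_2)\overline{u}_x$ and $2(x-c_1)\overline{y}_x$ on the reduced equations in $(c_1,c_2)$, the same use of Lemma \ref{Third-est} for the boundary terms, the same extra integration by parts (with $\|\la u\|_{L^2}=O(1)$ and $f_1,f_3\to 0$ in $H_0^1$) to tame the $\la^{1/2}f_1,\la^{1/2}f_3$ contributions, and the same final absorption under \eqref{SSC1}. The only difference is cosmetic: the paper's Young splitting yields all four terms of \eqref{4-est1} in one inequality \eqref{4-est9}, whereas you first extract $\|\la u\|^2_{L^2(c_1,c_2)}+\|\la y\|^2_{L^2(c_1,c_2)}=o(1)$ and then feed this back to recover the gradient terms, which is equally valid.
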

\begin{proof}
Inserting \eqref{pol3} and \eqref{pol5} in \eqref{pol4} and \eqref{pol6}, we get 
\begin{eqnarray}
-\la^2u-au_{xx}+i\la c_0y&=&\frac{f_2}{\la^{\frac{1}{2}}}+i\la^{\frac{1}{2}}f_1+\frac{c_0f_3}{\la^{\frac{1}{2}}} \ \ \text{in} \ \ (c_1,c_2),\label{4-est2}\\
-\la^2y-y_{xx}-i\la c_0u&=&\frac{f_4}{\la^{\frac{1}{2}}}+i\la^{\frac{1}{2}}f_3-\frac{c_0f_1}{\la^{\frac{1}{2}}} \ \ \ \text{in} \ \ (c_1,c_2)\label{4-est3}. 
\end{eqnarray}
Multiplying \eqref{4-est2} by $2(x-c_2)\overline{u_x}$ and \eqref{4-est3} by $2(x-c_1)\overline{y_x}$, integrating over $(c_1,c_2)$ and taking the real part, then using the fact that $\|F\|_\HH =o(1)$ and $\|U\|_\HH =1$, we obtain
\begin{equation}\label{4-est4}
\begin{array}{l}
\displaystyle 
-\la^2\int_{c_1}^{c_2}(x-c_2)\left(\abs{u}^2\right)_xdx-a\int_{c_1}^{c_2}(x-c_2)\left(\abs{u_x}^2\right)_xdx+\Re\left(2i\la c_0\int_{c_1}^{c_2}(x-c_2)y\overline{u_x}dx\right)=\vspace{0.25cm}\\
\displaystyle
\Re\left(2i\la^{\frac{1}{2}}\int_{c_1}^{c_2}(x-c_2)f_1\overline{u_x}dx\right)+\frac{o(1)}{\la^{\frac{1}{2}}}
\end{array}
\end{equation}
and
\begin{equation}\label{4-est5}
\begin{array}{l}
\displaystyle 
-\la^2\int_{c_1}^{c_2}(x-c_1)\left(\abs{y}^2\right)_xdx-\int_{c_1}^{c_2}(x-c_1)\left(\abs{y_x}^2\right)_xdx-\Re\left(2i\la c_0\int_{c_1}^{c_2}(x-c_1)u\overline{y_x}dx\right)=\vspace{0.25cm}\\
\displaystyle
\Re\left(2i\la^{\frac{1}{2}}\int_{c_1}^{c_2}(x-c_1)f_3\overline{y_x}dx\right)+\frac{o(1)}{\la^{\frac{1}{2}}}.
\end{array}
\end{equation}
Using integration by parts, \eqref{Third-est1}, and the fact that $f_1, f_3 \to 0$ in $H^1_0(0,L)$,  $\|u\|_{L^2(0,L)}=O(\la^{-1})$, $\|y\|_{L^2(0,L)}=O(\la^{-1})$, we deduce that
\begin{equation}\label{4-est6}
\Re\left(i\la^{\frac{1}{2}}\int_{c_1}^{c_2}(x-c_2)f_1\overline{u_x}dx\right)=\frac{o(1)}{\la^{\frac{1}{2}}}\quad \text{and}\quad \Re\left(i\la^{\frac{1}{2}}\int_{c_1}^{c_2}(x-c_1)f_3\overline{y_x}dx\right)=\frac{o(1)}{\la^{\frac{1}{2}}}.
\end{equation}
Inserting \eqref{4-est6} in \eqref{4-est4} and \eqref{4-est5}, then using integration by parts and \eqref{Third-est1}, we get 
\begin{eqnarray}
\int_{c_1}^{c_2}\left(\abs{\la u}^2+a\abs{u_x}^2\right)dx+\Re\left(i\la c_0\int_{c_1}^{c_2}(x-c_2)y\overline{u_x}dx\right)&=&o(1),\label{4-est7}\\
\int_{c_1}^{c_2}\left(\abs{\la y}^2+\abs{y_x}^2\right)dx-\Re\left(i\la c_0\int_{c_1}^{c_2}(x-c_1)u\overline{y_x}dx\right)&=&o(1).\label{4-est8}
\end{eqnarray}
Adding \eqref{4-est7} and \eqref{4-est8}, we get 
$$
\begin{array}{lll}
\displaystyle
\int_{c_1}^{c_2}\left(\abs{\la u}^2+a\abs{u_x}^2+\abs{\la y}^2+\abs{y_x}^2\right)dx&=&\displaystyle 
\Re\left(2i\la c_0\int_{c_1}^{c_2}(x-c_1)u\overline{y_x}dx\right)-\Re\left(2i\la c_0\int_{c_1}^{c_2}(x-c_2)y\overline{u_x}dx\right)+o(1)\\[0.in]
&\leq &\displaystyle 
2\la \abs{c_0}(c_2-c_1)\int_{c_1}^{c_2}\abs{u}\abs{y_x}dx+2\la\frac{\abs{c_0}}{a^{\frac{1}{4}}}(c_2-c_1)a^{\frac{1}{4}}\int_{c_1}^{c_2}\abs{y}\abs{u_x}dx+o(1).
\end{array}
$$
Applying Young's inequalities, we get 
\begin{equation}\label{4-est9}
\left(1-\abs{c_0}(c_2-c_1)\right)\int_{c_1}^{c_2}(\abs{\la u}^2+\abs{y_x}^2)dx+\left(1-\frac{1}{\sqrt{a}}\abs{c_0}(c_2-c_1)\right)\int_{c_1}^{c_2}(a\abs{u_x}^2+\abs{\la y}^2)dx\leq o(1).
\end{equation}
Finally, using \eqref{SSC1}, we get the desired result. The proof has been completed. 
\end{proof}
\begin{lemma}\label{5-est}
The solution $U\in D(\AA)$ of system \eqref{pol3}-\eqref{pol6} satisfies the following estimations
\begin{equation}\label{5-est1}
\int_0^{c_1}\left(\abs{z}^2+\abs{y_x}^2\right)dx=o(1)\quad \text{and}\quad \int_{c_2}^L\left(\abs{v}^2+a\abs{u_x}^2\right)dx=o(1). 
\end{equation}
\end{lemma}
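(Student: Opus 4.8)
The plan is to exploit that, under \eqref{SSC1} (hence \eqref{C1}), the viscoelastic coefficient $d$ is supported in $(d_1,d_2)\subset(c_2,L)$ and the coupling coefficient $c$ in $(c_1,c_2)$, so on the \emph{whole} interval $(0,c_2)$ the function $y$ obeys a scalar Helmholtz-type equation. Concretely, inserting \eqref{pol5} into \eqref{pol6} and using the definitions in \eqref{C1} one gets
\[
\la^2 y+y_{xx}=-\la^{-\frac12}f_4-i\la^{\frac12}f_3-i\la c_0\chi_{(c_1,c_2)}u+\la^{-\frac12}c_0\chi_{(c_1,c_2)}f_1\qquad\text{in }(0,c_2),
\]
with $y(0)=0$ and $y\in H^2(0,c_2)$ (since $d\equiv0$ near $[0,c_2]$). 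Multiplying by $2x\overline{y_x}$, integrating over $(0,c_2)$ and taking the real part, an integration by parts yields
\[
\la^2\int_0^{c_2}\abs{y}^2dx+\int_0^{c_2}\abs{y_x}^2dx=c_2\bigl(\abs{\la y(c_2)}^2+\abs{y_x(c_2)}^2\bigr)-\Re\int_0^{c_2}2x\,\overline{y_x}\,(\text{r.h.s.})\,dx .
\]

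The boundary terms are $o(1)$ by Lemma \ref{Third-est}. In the last integral: the $\la^{-\frac12}f_4$ and $\la^{-\frac12}c_0\chi_{(c_1,c_2)}f_1$ contributions are $O(\la^{-\frac12})=o(1)$ since $\|y_x\|_{L^2(0,L)}=O(1)$; the coupling contribution is $o(1)$ because $\|\la u\|_{L^2(c_1,c_2)}=o(1)$ by Lemma \ref{Fourth-est} while $\|y_x\|_{L^2(0,L)}=O(1)$; and the only delicate term, $-i\la^{\frac12}f_3$, is integrated by parts once more, its boundary contribution being $O\bigl(\la^{\frac12}\abs{f_3(c_2)}\abs{y(c_2)}\bigr)=o(1)$ (using $\abs{\la y(c_2)}=o(1)$ from Lemma \ref{Third-est} and $f_3\to0$ in $H_0^1$) and its remaining integral $O\bigl(\la^{\frac12}\|f_3\|_{H^1_0}\|y\|_{L^2}\bigr)=o(1)$ since $\|y\|_{L^2(0,L)}=O(\la^{-1})$. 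Hence $\la^2\int_0^{c_2}\abs{y}^2dx+\int_0^{c_2}\abs{y_x}^2dx=o(1)$; restricting to $(0,c_1)$ and using $z=i\la y-\la^{-\frac12}f_3$ gives the first estimate in \eqref{5-est1}.

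The second estimate is obtained by the mirror-image argument: since $b$ is supported in $(b_1,b_2)\subset(0,c_1)$, inserting \eqref{pol3} into \eqref{pol4} gives $\la^2 u+au_{xx}=-\la^{-\frac12}f_2-i\la^{\frac12}f_1+i\la c_0\chi_{(c_1,c_2)}y-\la^{-\frac12}c_0\chi_{(c_1,c_2)}f_3$ on $(c_1,L)$, with $u(L)=0$ and $u\in H^2(c_1,L)$. Multiplying by $2(x-L)\overline{u_x}$, integrating over $(c_1,L)$ and taking the real part, the interface terms at $x=c_1$ are $o(1)$ by Lemma \ref{Third-est} (namely $\abs{\la u(c_1)}=\abs{u_x(c_1)}=o(1)$), and the right-hand side is handled exactly as above, now using Lemma \ref{Fourth-est} for the coupling term and $f_j\to0$ in $H^1_0$ together with $\|u\|_{L^2(0,L)}=O(\la^{-1})$ for the $i\la^{\frac12}f_1$ term. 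This gives $\la^2\int_{c_1}^L\abs{u}^2dx+a\int_{c_1}^L\abs{u_x}^2dx=o(1)$, and with $v=i\la u-\la^{-\frac12}f_1$ one obtains the second estimate in \eqref{5-est1}.

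The hard part is the source term $i\la^{\frac12}f_3$ (resp.\ $i\la^{\frac12}f_1$): it does not converge to $0$ in $L^2(0,L)$, so it cannot be estimated directly, and one must integrate by parts to transfer the derivative onto $y$ (resp.\ $u$), after which three facts have to be combined at once — the strong $H^1_0$-convergence of $f_3$, the decay $\|y\|_{L^2}=O(\la^{-1})$, and the pointwise bound $\abs{\la y(c_2)}=o(1)$ from Lemma \ref{Third-est}. A secondary subtlety, and the reason Lemma \ref{Fourth-est} is invoked here, is that the presence of the coupling forces the multiplier identity to be set up on the enlarged intervals $(0,c_2)$ and $(c_1,L)$ rather than on $(0,c_1)$ and $(c_2,L)$.
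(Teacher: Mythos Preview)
Your argument is correct. The paper's own proof is a single line, ``Using the same argument of Lemma~\ref{Sec-est}'', which points to the cut-off multiplier technique: take $h\in C^1$ vanishing at \emph{both} endpoints of a suitable interval, derive the identity $\int h'(|z|^2+|y_x|^2)\,dx=o(1)$ (resp.\ for $u$), and choose $h$ built from cut-offs that transition inside the region $(c_1,c_2)$ where Lemma~\ref{Fourth-est} already gives $L^2$-smallness. No boundary traces are needed because $h$ kills them.

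Your route is different: you use the \emph{linear} multipliers $2x\overline{y_x}$ on $(0,c_2)$ and $2(x-L)\overline{u_x}$ on $(c_1,L)$, which vanish at only one endpoint, and you absorb the remaining boundary terms at $x=c_2$ (resp.\ $x=c_1$) via the pointwise estimates $\abs{\la y(c_2)},\abs{y_x(c_2)}=o(1)$ (resp.\ for $u$ at $c_1$) from Lemma~\ref{Third-est}. The coupling term is then handled directly with Lemma~\ref{Fourth-est}, and the $i\la^{1/2}f_3$ (resp.\ $i\la^{1/2}f_1$) term by one integration by parts exactly as you describe. This is legitimate because $d\equiv0$ on $(0,d_1)\supset(0,c_2)$ and $b\equiv0$ on $(b_2,L)\supset(c_1,L)$, so the needed $H^2$ regularity holds there.

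What each approach buys: the paper's cut-off method avoids any pointwise trace control and is a mechanical repetition of Lemma~\ref{Sec-est}; your method avoids constructing cut-offs but needs the extra input from Lemma~\ref{Third-est}. Both rely on Lemma~\ref{Fourth-est}, just in different places. Your proof is more explicit than the paper's one-line reference and is in fact closer in spirit to the arguments of Lemmas~\ref{Third-est} and~\ref{Fourth-est}.
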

\begin{proof}
Using the same argument of Lemma \ref{Sec-est}, we obtain \eqref{5-est1}. 
\end{proof}

\noindent \textbf{Proof of Theorem \ref{1pol}.} Using \eqref{eq-5.0}, Lemmas \ref{F-est}, \ref{Sec-est},  \ref{Fourth-est}, \ref{5-est}, we get $\|U\|_{\mathcal{H}}=o(1)$, which contradicts \eqref{pol1}. Consequently, condition ${\rm (H2)}$ holds. This implies the energy decay estimation \eqref{Energypol1}.


\subsubsection{Proof of Theorem \ref{2pol}} In this subsection, we will prove Theorem \ref{2pol} by checking the condition \eqref{H2}, that is by finding a contradiction with \eqref{pol1} by showing $\|U\|_{\mathcal{H}}=o(1)$. For clarity, we divide the proof into several Lemmas. By taking the inner product of \eqref{pol2-w} with $U$ in $\mathcal{H}$, we remark that 
\begin{equation*}
\int_0^L b\abs{v_x}^2dx=-\Re\left(\left<\mathcal{A}U,U\right>_{\mathcal{H}}\right)=\la^{-2}\Re\left(\left<F,U\right>_{\mathcal{H}}\right)=o(\la^{-2}).
\end{equation*}
Then, 
\begin{equation}\label{C2-dissipation}
\int_{b_1}^{b_2}\abs{v_x}^2dx=o(\la^{-2}).
\end{equation}
Using \eqref{pol3} and \eqref{C2-dissipation}, and the fact that $f_1 \to 0$ in $H^1_0(0,L)$, we get 
\begin{equation}\label{C2-dissipation1}
\int_{b_1}^{b_2}\abs{u_x}^2dx=o(\la^{-4}).
\end{equation}
\begin{lemma}\label{C2-Fest}
Let $0<\varepsilon<\frac{b_2-b_1}{2}$, the solution $U\in D(\mathcal{A})$ of the system \eqref{pol3}-\eqref{pol6} satisfies the following estimation 
\begin{equation}\label{C2-Fest1}
\int_{b_1+\varepsilon}^{b_2-\varepsilon}\abs{v}^2dx=o(\la^{-2}). 
\end{equation}
\end{lemma}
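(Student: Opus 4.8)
The plan is to localize equation \eqref{pol4} on the damping interval $(b_1,b_2)$ by means of a smooth cut-off and to exploit the dissipation estimates \eqref{C2-dissipation} and \eqref{C2-dissipation1}. First I would fix a function $\varphi\in C^1([0,L])$ with $0\le\varphi\le 1$, $\varphi\equiv 1$ on $(b_1+\varepsilon,b_2-\varepsilon)$, and with support contained in $(b_1,b_2)$; this is possible precisely because $\varepsilon<\frac{b_2-b_1}{2}$. Since \eqref{C3} holds, the coupling coefficient $c$ vanishes on $(b_1,b_2)$ and $b\equiv b_0$ there, so on this interval \eqref{pol4} reduces to $i\la v=(au_x+b_0v_x)_x+\la^{-2}f_2$.

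Next I would multiply this identity by $-i\la^{-1}\varphi^2\overline{v}$, integrate over $(b_1,b_2)$ and take the real part; the left-hand side becomes exactly $\int_{b_1}^{b_2}\varphi^2|v|^2\,dx$. In the term containing $(au_x+b_0v_x)_x$ I would integrate by parts — the boundary contributions vanish because $\varphi$ has compact support in $(b_1,b_2)$ — which produces $i\la^{-1}\int_{b_1}^{b_2}\bigl(2\varphi\varphi'\overline{v}+\varphi^2\overline{v_x}\bigr)(au_x+b_0v_x)\,dx$. From \eqref{C2-dissipation} and \eqref{C2-dissipation1} one has $\int_{b_1}^{b_2}|au_x+b_0v_x|^2\,dx=o(\la^{-2})$; combined with the uniform bound $\|v\|_{L^2(0,L)}=O(1)$ coming from $\|U\|_{\HH}=1$, the Cauchy-Schwarz inequality shows that the $\varphi\varphi'$ contribution is $O(\la^{-1})\,O(1)\,o(\la^{-1})=o(\la^{-2})$ and the $\varphi^2\overline{v_x}$ contribution is $O(\la^{-1})\,o(\la^{-1})\,o(\la^{-1})=o(\la^{-3})$. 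Finally the remaining term $-i\la^{-3}\int_{b_1}^{b_2}\varphi^2\overline{v}f_2\,dx$ is bounded by $\la^{-3}\|v\|_{L^2(0,L)}\|f_2\|_{L^2(0,L)}=o(\la^{-3})$ because $f_2\to 0$ in $L^2(0,L)$.

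Collecting these bounds gives $\int_{b_1}^{b_2}\varphi^2|v|^2\,dx=o(\la^{-2})$, and since $\varphi\equiv 1$ on $(b_1+\varepsilon,b_2-\varepsilon)$ we conclude $\int_{b_1+\varepsilon}^{b_2-\varepsilon}|v|^2\,dx\le\int_{b_1}^{b_2}\varphi^2|v|^2\,dx=o(\la^{-2})$, which is \eqref{C2-Fest1}. There is no serious obstacle here: this is a standard interior localization argument, and the only delicate point is the bookkeeping of powers of $\la$ — the gain comes from pairing the $\la^{-1}$ prefactor with the damping estimate $o(\la^{-1})$ for $au_x+b_0v_x$ in $L^2(b_1,b_2)$; the reason the conclusion must be stated on the shrunken interval $(b_1+\varepsilon,b_2-\varepsilon)$ rather than all of $(b_1,b_2)$ is exactly that the cut-off derivative $\varphi'$ has to be absorbed and one has no direct control of $v$ near the endpoints $b_1,b_2$.
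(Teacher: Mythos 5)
Your argument is correct and is essentially the paper's own proof: the paper likewise multiplies \eqref{pol4} by $\la^{-1}\theta_1\overline{v}$ with a cut-off $\theta_1$ equal to $1$ on $(b_1+\varepsilon,b_2-\varepsilon)$ and vanishing outside $(b_1,b_2)$, integrates by parts, and absorbs the remaining terms via \eqref{C2-dissipation}, \eqref{C2-dissipation1}, the uniform bound on $v$, and $f_2\to 0$ in $L^2(0,L)$. Your use of $\varphi^2$ instead of $\theta_1$ and of the multiplier $-i\la^{-1}\varphi^2\overline{v}$ is only a cosmetic variation.
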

\begin{proof}
First, we fix a cut-off function $\theta_1\in C^{1}([0,c_1])$ such that 
\begin{equation}\label{C2-theta1}
\theta_1(x)=\left\{\begin{array}{clc}
1&\text{if}&x\in (b_1+\varepsilon,b_2-\varepsilon),\\
0&\text{if}&x\in (0,b_1)\cup (b_2,L),\\
0\leq \theta_1\leq 1&&\text{elsewhere}.
\end{array}
\right.
\end{equation}
Multiplying \eqref{pol4} by $\la^{-1}\theta_1 \overline{v}$, integrating over $(0,c_1)$, using integration by parts, and the fact that $f_2 \to 0$ in $L^2(0,L)$ and $v$ is uniformly bounded in $L^2(0,L)$, we get 
\begin{equation}\label{C2-Fest2}
i\int_0^{c_1}\theta_1\abs{v}^2dx+\frac{1}{\la}\int_0^{c_1}(u_x+bv_x)(\theta_1'\overline{v}+\theta \overline{v_x})dx=o(\la^{-3}).
\end{equation}
Using \eqref{C2-dissipation} and the fact that $\|U\|_{\mathcal{H}}=1$, we get 
\begin{equation*}
\frac{1}{\la}\int_0^{c_1}(u_x+bv_x)(\theta_1'\overline{v}+\theta \overline{v_x})dx=o(\la^{-2}).
\end{equation*}
Inserting the above estimation in \eqref{C2-Fest2}, we get the desired result \eqref{C2-Fest1}. The proof has been completed. 
\end{proof}
\begin{lemma}\label{C2-Secest}
The solution $U\in D(\mathcal{A})$ of the system \eqref{pol3}-\eqref{pol6} satisfies the following estimation 
\begin{equation}\label{C2-Secest1}
\int_{0}^{c_1}(\abs{v}^2+\abs{u_x}^2)dx=o(1). 
\end{equation}
\end{lemma}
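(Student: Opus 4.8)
The plan is to mimic the multiplier argument of Lemma~\ref{Sec-est}, taking into account that under \eqref{SSC2} we have $a=1$ and $d=0$, and that by \eqref{C3} the coupling coefficient $c$ vanishes on $(0,c_1)$. Fix $\varepsilon=\tfrac{1}{4}(b_2-b_1)$ and choose cut-off functions $p_1,p_2\in C^1([0,c_1])$ with
\[
p_1\equiv1\ \text{on}\ (0,b_1+\varepsilon),\quad p_1\equiv0\ \text{on}\ (b_2-\varepsilon,c_1),\quad p_2\equiv0\ \text{on}\ (0,b_1+\varepsilon),\quad p_2\equiv1\ \text{on}\ (b_2-\varepsilon,c_1),
\]
and set $h(x)=xp_1(x)+(x-c_1)p_2(x)$. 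Then $h\in C^1([0,c_1])$, $h(0)=h(c_1)=0$, $h'\equiv1$ on $(0,b_1+\varepsilon)\cup(b_2-\varepsilon,c_1)$, and $h'$ is bounded on $(b_1+\varepsilon,b_2-\varepsilon)$.

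The first step is to establish the identity $\int_0^{c_1}h'\big(|v|^2+|u_x+bv_x|^2\big)dx=o(1)$. On $(0,c_1)$ equation \eqref{pol4} reads $i\la v-(u_x+bv_x)_x=\la^{-2}f_2$; multiplying it by $2h\,\overline{(u_x+bv_x)}$, integrating over $(0,c_1)$, integrating by parts (the boundary terms vanish since $h$ vanishes at the endpoints) and taking the real part gives
\[
\Re\left(2i\la\int_0^{c_1}vh\,\overline{(u_x+bv_x)}dx\right)+\int_0^{c_1}h'|u_x+bv_x|^2dx=\Re\left(2\la^{-2}\int_0^{c_1}f_2h\,\overline{(u_x+bv_x)}dx\right),
\]
whose right-hand side is $o(\la^{-2})$ because $\|u_x+bv_x\|_{L^2(0,c_1)}=O(1)$ (from $\|U\|_{\mathcal H}=1$ together with \eqref{C2-dissipation}) and $f_2\to0$ in $L^2(0,L)$. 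In the first term on the left I would split off $2i\la b_0\int_{b_1}^{b_2}vh\,\overline{v_x}dx$, which is $o(1)$ by Cauchy--Schwarz, $\|v\|_{L^2(b_1,b_2)}\le1$ and \eqref{C2-dissipation}; in the remaining part $2i\la\int_0^{c_1}vh\,\overline{u_x}dx$ I would substitute $i\la\overline{u_x}=-\overline{v_x}-\la^{-2}\overline{(f_1)_x}$ (obtained by differentiating \eqref{pol3}), integrate $-2\int_0^{c_1}vh\,\overline{v_x}dx$ by parts into $\int_0^{c_1}h'|v|^2dx$, and absorb the $f_1$-contribution into $o(\la^{-2})$ using $f_1\to0$ in $H^1_0(0,L)$ and $\|v\|_{L^2}\le1$. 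Collecting terms gives the claimed identity.

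To conclude, on $(b_1+\varepsilon,b_2-\varepsilon)$ we have $\int|v|^2=o(\la^{-2})$ by Lemma~\ref{C2-Fest} and $\int|u_x+bv_x|^2\le2\int|u_x|^2+2b_0^2\int|v_x|^2=o(\la^{-2})$ by \eqref{C2-dissipation1} and \eqref{C2-dissipation}; since $h'$ is bounded there, the part of the identity over that interval is $o(\la^{-2})$. Moving it to the right-hand side and using $h'\equiv1$ on the two outer intervals yields
\[
\int_0^{b_1+\varepsilon}\big(|v|^2+|u_x+bv_x|^2\big)dx+\int_{b_2-\varepsilon}^{c_1}\big(|v|^2+|u_x+bv_x|^2\big)dx=o(1).
\]
Since $b\equiv0$ on $(0,b_1)\cup(b_2,c_1)$ this controls $\int|u_x|^2$ there, and together with $\int_{b_1}^{b_2}|u_x|^2=o(\la^{-4})$ from \eqref{C2-dissipation1} it gives $\int_0^{c_1}|u_x|^2dx=o(1)$; in the same way the two displayed integrals together with Lemma~\ref{C2-Fest} give $\int_0^{c_1}|v|^2dx=o(1)$, which is \eqref{C2-Secest1}.

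The argument is essentially bookkeeping; the only point requiring care is that Lemma~\ref{C2-Fest} controls $v$ only on the shrunken interval $(b_1+\varepsilon,b_2-\varepsilon)$, which is why the cut-off $h$ must be affine — so that $h'\equiv1$ — on the whole of $(0,b_1+\varepsilon)$ and $(b_2-\varepsilon,c_1)$ while performing its transition strictly inside $(b_1+\varepsilon,b_2-\varepsilon)$, where every relevant quantity is already known to be $o(\la^{-2})$. Beyond that, one only has to keep track of the powers of $\la$ produced by the choice $\ell=2$.
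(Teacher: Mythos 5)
Your proposal is correct and follows essentially the same route as the paper: the same multiplier $2h\,\overline{(u_x+bv_x)}$ on $(0,c_1)$ with $h$ vanishing at the endpoints, the same substitution $i\la\overline{u}_x=-\overline{v}_x-\la^{-2}\overline{(f_1)}_x$ from \eqref{pol3} to produce $\int_0^{c_1}h'(|v|^2+|u_x+bv_x|^2)dx=o(1)$, and the same choice of $h$ affine on $(0,b_1+\varepsilon)$ and $(b_2-\varepsilon,c_1)$ with the transition inside $(b_1+\varepsilon,b_2-\varepsilon)$, concluded via \eqref{C2-dissipation}, \eqref{C2-dissipation1} and Lemma \ref{C2-Fest}. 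Your explicit remark that the middle-interval contribution is harmless (regardless of the sign of $h'$ there) because all integrands are already $o(\la^{-2})$ on $(b_1+\varepsilon,b_2-\varepsilon)$ is exactly the point the paper uses implicitly.
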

\begin{proof}
Let $h\in C^1([0,c_1])$ such that $h(0)=h(c_1)=0$. Multiplying \eqref{pol4} by $2h\overline{(u_x+bv_x)}$, integrating over $(0,c_1)$ and taking the real part, then using integration by parts and the fact that $f_2 \to 0$ in $L^2(0,L)$, we get 
\begin{equation}\label{C2-Secest2}
	\Re\left(2\int_0^{c_1}i\la vh\overline{(u_x+bv_x)}dx\right)+\int_0^{c_1}h'\abs{u_x+bv_x}^2dx=o(\la^{-2}).
\end{equation}
Using \eqref{C2-dissipation} and the fact that $v$ is uniformly bounded in $L^2(0,L)$, we get 
\begin{equation}\label{C2-Secest3}
\Re\left(2\int_0^{c_1}i\la vh\overline{(u_x+bv_x)}dx\right)=2\int_0^{c_1}i\la vh\overline{u_x}dx+o(1). 
\end{equation}
From \eqref{pol3}, we have
\begin{equation}\label{C2-Secest4}
i\la\overline{u}_x=-\overline{v}_x-\frac{\left(\overline{f_1}\right)_x}{\la^2}.
\end{equation}
Inserting \eqref{C2-Secest4} in \eqref{C2-Secest3}, using integration  by parts and the fact that $f_1 \to 0$ in $H^1_0(0,L)$, we get 
\begin{equation}\label{C2-Secest5}
\Re\left(2\int_0^{c_1}i\la vh\overline{(u_x+bv_x)}dx\right)=\int_0^{c_1}h'\abs{v}^2dx+o(1).
\end{equation}
Inserting \eqref{C2-Secest5} in \eqref{C2-Secest2}, we obtain
\begin{equation}\label{C2-Secest6}
\int_0^{c_1}h'\left(\abs{v}^2+\abs{u_x+bv_x}^2\right)dx=o(1).
\end{equation}
Now, we fix the following cut-off functions
$$
\theta_2(x):=\left\{\begin{array}{ccc}
1&\text{in}&(0,b_1+\varepsilon),\\
0&\text{in}&(b_2-\varepsilon,c_1),\\
0\leq \theta_2\leq 1&\text{in}&(b_1+\varepsilon,b_2-\varepsilon),
\end{array}
\right. \quad\text{and}\quad 
\theta_3(x):=\left\{\begin{array}{ccc}
1&\text{in}&(b_2-\varepsilon,c_1),\\
0&\text{in}&(0,b_1+\varepsilon),\\
0\leq \theta_3\leq 1&\text{in}&(b_1+\varepsilon,b_2-\varepsilon).
\end{array}
\right.
$$
Taking $h(x)=x\theta_2(x)+(x-c_1)\theta_3(x)$ in \eqref{C2-Secest6}, then using  \eqref{C2-dissipation} and  \eqref{C2-dissipation1}, we get 
\begin{equation}\label{C2-Secest7}
\int_{(0,b_1+\varepsilon)\cup (b_2-\varepsilon,c_1)}\abs{v}^2dx+\int_{(0,b_1)\cup (b_2,c_1)}|u_x|^2dx=o(1).
\end{equation}
Finally, from  \eqref{C2-dissipation1}, \eqref{C2-Fest1} and \eqref{C2-Secest7},  we get the desired result \eqref{C2-Secest1}. The proof has been completed. 
\end{proof}

\noindent  
\begin{lemma}\label{C2-Fourthest}
The solution $U\in D(\AA)$ of system \eqref{pol3}-\eqref{pol6} satisfies the following estimations
\begin{equation}\label{C2-Thirest1}
	\abs{\la u(c_1)}=o(1)\quad \text{and}\quad \abs{u_x(c_1)}=o(1),
\end{equation}
\begin{equation}\label{C2-Fourthest1}
\int_{c_1}^{c_2}\abs{\la u}^2dx=\int_{c_1}^{c_2}\abs{\la y}^2dx+o(1).
\end{equation}
\end{lemma}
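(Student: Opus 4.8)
The statement has two parts. The bound \eqref{C2-Thirest1} on the traces at $c_1$ is the $\ell=2$ analogue of Lemma \ref{Third-est}, and the near-equality \eqref{C2-Fourthest1} is the quantitative version of the identity $\int_{c_1}^{c_2}\abs{u}^2=\int_{c_1}^{c_2}\abs{y}^2$ proved in Step~1 of Lemma \ref{ker-SS123}; the plan is to establish them in this order, using the estimates already available on $(0,c_1)$ (Lemma \ref{C2-Secest}) and the dissipation identity \eqref{C2-dissipation}.

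For \eqref{C2-Thirest1} I would first observe that on $(b_2,c_1)$ both $b$ and $c$ vanish and $a=1$ by \eqref{SSC2}, so inserting \eqref{pol3} into \eqref{pol4} reduces the system there to the scalar equation $\la^2u+u_{xx}=-i\la^{-1}f_1-\la^{-2}f_2$, with $u\in H^2(b_2,c_1)$. Multiplying this by $2(x-b_2)\overline{u_x}$, integrating over $(b_2,c_1)$, taking the real part and integrating by parts (the boundary contribution at $x=b_2$ vanishes), one obtains
\[
(c_1-b_2)\bigl(\abs{\la u(c_1)}^2+\abs{u_x(c_1)}^2\bigr)=\int_{b_2}^{c_1}\bigl(\abs{\la u}^2+\abs{u_x}^2\bigr)dx+\Re\Bigl(2\int_{b_2}^{c_1}(x-b_2)\overline{u_x}\bigl(-i\la^{-1}f_1-\la^{-2}f_2\bigr)dx\Bigr).
\]
Since $\la u=-i(v+\la^{-2}f_1)$, the estimate \eqref{C2-Secest1} gives $\int_{b_2}^{c_1}(\abs{\la u}^2+\abs{u_x}^2)dx=o(1)$, while Cauchy-Schwarz together with $f_1,f_2\to0$ in $\HH$ bounds the last term by $o(\la^{-1})$. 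As the left-hand side is a sum of two non-negative terms, each must be $o(1)$, which is precisely \eqref{C2-Thirest1}.

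For \eqref{C2-Fourthest1} the decisive point is to test the \emph{full} equations \eqref{pol4} and \eqref{pol6} over all of $(0,L)$ rather than over $(c_1,c_2)$, so that every boundary contribution disappears by the Dirichlet conditions. Concretely I would multiply \eqref{pol4} by $\overline{y}$ and \eqref{pol6} by $\overline{u}$, integrate over $(0,L)$, integrate by parts, and subtract the complex conjugate of the second identity from the first. Since $a=1$ and $d=0$, the second-order cross terms $\int_0^Lu_x\overline{y_x}$ cancel; after substituting \eqref{pol3} and \eqref{pol5} into the remaining integrals, the terms $i\la\int_0^Lv\overline{y}+i\la\int_0^L\overline{z}u$ collapse to $-i\la^{-1}\int_0^L(f_1\overline{y}+\overline{f_3}u)$ and the coupling terms combine to $ic_0\la\bigl(\int_{c_1}^{c_2}\abs{y}^2-\int_{c_1}^{c_2}\abs{u}^2\bigr)$ up to an $O(\la^{-2})$ error, and the only term new with respect to the kernel computation is the damping contribution $b_0\int_{b_1}^{b_2}v_x\overline{y_x}$ produced when $(bv_x)_x$ is integrated by parts. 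Collecting everything, one is left with
\[
ic_0\la\Bigl(\int_{c_1}^{c_2}\abs{y}^2dx-\int_{c_1}^{c_2}\abs{u}^2dx\Bigr)=-b_0\int_{b_1}^{b_2}v_x\overline{y_x}\,dx+o(\la^{-1}),
\]
where the error estimates use $\norm{u}_{L^2(0,L)}+\norm{y}_{L^2(0,L)}=O(\la^{-1})$ (from \eqref{pol3}, \eqref{pol5} and $\norm{U}_{\HH}=1$) and $F\to0$ in $\HH$, and where $b_0\int_{b_1}^{b_2}v_x\overline{y_x}=o(\la^{-1})$ because $\norm{v_x}_{L^2(b_1,b_2)}=o(\la^{-1})$ by \eqref{C2-dissipation}. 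Hence $\la\bigl|\int_{c_1}^{c_2}\abs{y}^2-\int_{c_1}^{c_2}\abs{u}^2\bigr|=o(\la^{-1})$; multiplying by $\la$ yields $\bigl|\int_{c_1}^{c_2}\abs{\la y}^2-\int_{c_1}^{c_2}\abs{\la u}^2\bigr|=o(1)$, i.e.\ \eqref{C2-Fourthest1}.

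I do not expect a deep obstacle: the work is entirely in the bookkeeping of the powers of $\la$, making sure that after the final multiplication by $\la$ every error term is still $o(1)$. The one genuinely delicate choice is the one just mentioned, namely integrating over $(0,L)$ rather than over $(c_1,c_2)$ in the second part: a local integration would leave boundary terms of the form $\la\,u_x(c_j)\overline{y(c_j)}$ and $\la\,\overline{y_x(c_j)}\,u(c_j)$ at $c_1$ and $c_2$, and these cannot be absorbed --- \eqref{C2-Thirest1} only gives $u_x(c_1)=o(1)$, while the traces of $y$ and $y_x$ are a priori only bounded by $\norm{U}_{D(\AA)}=O(\la)$. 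Working globally replaces all such terms by the single damping contribution $b_0\int_{b_1}^{b_2}v_x\overline{y_x}$, which is disposed of by \eqref{C2-dissipation}.
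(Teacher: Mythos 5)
Your proof is correct and follows essentially the paper's route: the trace bounds at $c_1$ are obtained by the same weighted multiplier $2(x-b_2)\overline{u}_x$ argument as Lemma \ref{Third-est}, and the near-identity \eqref{C2-Fourthest1} comes from the same global pairing of the two equations against $\overline{y}$ and $\overline{u}$ over $(0,L)$, with the Kelvin--Voigt term absorbed via \eqref{C2-dissipation}. Your variant (testing without the factor $\la$ and subtracting the conjugate, then scaling by $\la$ at the end) is only cosmetically different from the paper's multiplication by $\la\overline{y}$, $\la\overline{u}$ followed by taking imaginary parts.
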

\begin{proof}
	First, using the same argument of Lemma \ref{Third-est}, we claim \eqref{C2-Thirest1}. 
 Inserting \eqref{pol3}, \eqref{pol5} in \eqref{pol4} and \eqref{pol6}, we get 
	\begin{eqnarray}
		\la^2u+\left(u_x+bv_x\right)_x-i\la cy&=&-\frac{f_2}{\la^{2}}-i\frac{f_1}{\la}-c\frac{f_3}{\la^2},\label{Combination1}\\
		\la^2y+y_{xx}+i\la cu&=&-\frac{f_4}{\la^2}-\frac{if_3}{\la}+c\frac{f_1}{\la^2}.\label{Combination2}
	\end{eqnarray}
Multiplying \eqref{Combination1} and \eqref{Combination2} by $\la \overline{y}$ and $\la \overline{u}$ respectively, integrating over $(0,L)$, then using integration by parts, \eqref{C2-dissipation}, and the fact that $\|U\|_\HH=1$ and $\|F\|_\HH =o(1)$, we get 
\begin{eqnarray}
\la^{3}\int_0^Lu\bar{y}dx-\la\int_0^Lu_x\bar{y}_xdx-i c_0\int_{c_1}^{c_2}\abs{\la y}^2dx=o(1),\label{C2-Fourthest2}\\
\la^{3}\int_0^Ly\bar{u}dx-\la \int_0^Ly_x\bar{u}_xdx+i c_0\int_{c_1}^{c_2}\abs{\la u}^2dx=\frac{o(1)}{\la}\label{C2-Fourthest3}.
\end{eqnarray}
Adding \eqref{C2-Fourthest2} and \eqref{C2-Fourthest3} and taking the imaginary parts, we get the desired result \eqref{C2-Fourthest1}. The proof is thus completed. 
\end{proof}
\begin{lemma}\label{C2-Fifthest}
The solution $U\in D(\AA)$ of system \eqref{pol3}-\eqref{pol6} satisfies the following asymptotic behavior
\begin{equation}\label{C2-Fifthest1}
\int_{c_1}^{c_2}\abs{\la u}^2dx=o(1),\quad \int_{c_1}^{c_2}\abs{\la y}^2dx=o(1)\quad \text{and}\quad \int_{c_1}^{c_2}\abs{u_x}^2dx=o(1).
\end{equation}
\end{lemma}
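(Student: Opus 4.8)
The plan is to exploit the system \eqref{Combination1}--\eqref{Combination2} restricted to $(c_1,c_2)$ (where $b=d=0$ and $c=c_0$), together with the boundary information already obtained in Lemma \ref{C2-Fourthest}, and to combine the multiplier identities with the balance \eqref{C2-Fourthest1} to kill the coupling term. First I would multiply \eqref{Combination1} by $2(x-c_2)\overline{u_x}$ and \eqref{Combination2} by $2(x-c_1)\overline{y_x}$, integrate over $(c_1,c_2)$, take real parts, and integrate by parts. Since on $(c_1,c_2)$ the viscoelastic term $bv_x$ drops out, the left-hand sides produce, up to boundary terms at $c_1$ and $c_2$, the quantities $\int_{c_1}^{c_2}(\abs{\la u}^2+a\abs{u_x}^2)\,dx$ and $\int_{c_1}^{c_2}(\abs{\la y}^2+\abs{y_x}^2)\,dx$ (here $a=1$ under \eqref{SSC2}), plus the mixed terms $\Re\!\left(i\la c_0\int_{c_1}^{c_2}(x-c_2)y\overline{u_x}\,dx\right)$ and $-\Re\!\left(i\la c_0\int_{c_1}^{c_2}(x-c_1)u\overline{y_x}\,dx\right)$. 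The boundary terms at $c_1$ are $o(1)$ by \eqref{C2-Thirest1}, and I expect the analogue $\abs{\la y(c_2)}=o(1)$, $\abs{y_x(c_2)}=o(1)$ to follow by the same argument of Lemma \ref{Third-est} applied on $(c_2,L)$ (or one carries the $c_2$-boundary terms along and absorbs them at the end).

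Next I would add the two resulting identities. The right-hand side is then controlled, via Cauchy--Schwarz and Young's inequality, by
\[
2\abs{\la}\abs{c_0}(c_2-c_1)\int_{c_1}^{c_2}\bigl(\abs{u}\abs{y_x}+\abs{y}\abs{u_x}\bigr)\,dx+o(1),
\]
and applying Young's inequality with the weight $\frac{1}{\sqrt a}=1$ exactly as in \eqref{4-est9} / \eqref{ST5step2} yields
\[
\bigl(1-\abs{c_0}(c_2-c_1)\bigr)\int_{c_1}^{c_2}\bigl(\abs{\la u}^2+\abs{y_x}^2\bigr)\,dx+\bigl(1-\abs{c_0}(c_2-c_1)\bigr)\int_{c_1}^{c_2}\bigl(\abs{u_x}^2+\abs{\la y}^2\bigr)\,dx\le o(1).
\]
Under \eqref{SSC2} we have $\abs{c_0}(c_2-c_1)<1$, so the two coefficients are strictly positive and all four integrals are $o(1)$; in particular $\int_{c_1}^{c_2}\abs{\la u}^2dx=o(1)$, $\int_{c_1}^{c_2}\abs{\la y}^2dx=o(1)$ and $\int_{c_1}^{c_2}\abs{u_x}^2dx=o(1)$, which is \eqref{C2-Fifthest1}. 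Note that \eqref{C2-Fourthest1} is not strictly needed if one argues directly as above, but it can be used to simplify: it lets one replace $\int\abs{\la u}^2$ by $\int\abs{\la y}^2$ (up to $o(1)$) and obtain a single-sign inequality with coefficient $1-\abs{c_0}(c_2-c_1)$, mirroring Step 2 of the Case 3 computation in Lemma \ref{ker-SS123}.

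The main obstacle is bookkeeping of the boundary terms at $x=c_2$: unlike at $c_1$, where \eqref{C2-Thirest1} is already in hand, I need the counterpart estimates at $c_2$ for $\la y$ and $y_x$ (and, depending on which multiplier is used, possibly for $\la u$ and $u_x$ as well). These should come from the same Rellich-type multiplier argument as in Lemma \ref{Third-est}, now run on $(c_2,L)$ using that $c\equiv 0$ there and that $\int_{c_2}^{L}(\abs{v}^2+\abs{u_x}^2)\,dx$ and $\int_{c_2}^{L}(\abs{z}^2+\abs{y_x}^2)\,dx$ are small — but since here $\ell=2$ the powers of $\la$ and the orders of smallness differ from the $\ell=\frac12$ case, so the estimates must be redone with care rather than quoted verbatim. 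Once the $c_2$-boundary terms are shown to be $o(1)$, the rest is the routine Young's-inequality absorption above.
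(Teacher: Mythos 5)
There is a genuine gap. Your plan transplants the two-multiplier argument of Lemma \ref{Fourth-est} (Case 1) to the present setting, but here \eqref{SSC2} is in force, i.e.\ $d\equiv 0$: the $y$-equation carries no Kelvin--Voigt damping at all. Consequently the boundary estimates $\abs{\la y(c_2)}=o(1)$, $\abs{y_x(c_2)}=o(1)$ that you ``expect to follow by the same argument of Lemma \ref{Third-est} applied on $(c_2,L)$'' are not available: that argument needed the prior smallness of $\int_{c_2}^{L}(\abs{z}^2+\abs{y_x}^2)dx$, which in Case 1 came from Lemma \ref{Sec-est} via the damping region $(d_1,d_2)\subset(c_2,L)$. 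In Case 3 no such region exists, and the smallness of $y$ on $(c_2,L)$ is only established \emph{after} the present lemma (Lemmas \ref{C2-sixthest} and \ref{C2-seventhest}, which use \eqref{C2-Fifthest1} as input); invoking it here would be circular. Nor can you ``carry the $c_2$-boundary terms along and absorb them'': with the multiplier $2(x-c_1)\overline{y_x}$ the terms $(c_2-c_1)\bigl(\abs{\la y(c_2)}^2+\abs{y_x(c_2)}^2\bigr)$ appear on the majorizing side of the inequality with no smallness and no sign advantage, so they cannot be absorbed by the left-hand side.

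The paper's proof avoids this entirely: it uses \emph{only} the $u$-equation with the multiplier $2(x-c_2)\overline{u}_x$, whose weight vanishes at $x=c_2$, so no boundary terms at $c_2$ ever appear; the terms at $c_1$ are handled by \eqref{C2-Thirest1}. After Young's inequality this gives
\begin{equation*}
\int_{c_1}^{c_2}\bigl(\abs{\la u}^2+\abs{u_x}^2\bigr)dx\leq \abs{c_0}(c_2-c_1)\int_{c_1}^{c_2}\abs{u_x}^2dx+\abs{c_0}(c_2-c_1)\int_{c_1}^{c_2}\abs{\la y}^2dx+o(1),
\end{equation*}
and it is precisely the identity \eqref{C2-Fourthest1} that lets one replace $\int_{c_1}^{c_2}\abs{\la y}^2dx$ by $\int_{c_1}^{c_2}\abs{\la u}^2dx+o(1)$ and close the estimate under $\abs{c_0}(c_2-c_1)<1$; the $o(1)$ bound for $\int_{c_1}^{c_2}\abs{\la y}^2dx$ is then read off again from \eqref{C2-Fourthest1}. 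So your remark that \eqref{C2-Fourthest1} ``is not strictly needed'' rests exactly on the unavailable $c_2$-boundary control: in this indirect-damping case it is the essential ingredient, not an optional simplification. To repair your write-up, drop the $y$-multiplier identity, keep only the $u$-multiplier with weight $(x-c_2)$, and use \eqref{C2-Fourthest1} as in the paper.
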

\begin{proof}
First, Multiplying \eqref{Combination1} by $2(x-c_2)\bar{u}_x$, integrating over $(c_1,c_2)$ and taking the real part, using the fact that $\|U\|_\HH=1$ and $\|F\|_\HH =o(1)$, we get
\begin{equation}\label{C2-Fifthest2}
\la^2\int_{c_1}^{c_2}(x-c_2)\left(\abs{u}^2\right)_xdx+\int_{c_1}^{c_2}(x-c_2)\left(\abs{u_x}^2\right)_xdx=\Re\left(2i\la c_0\int_{c_1}^{c_2}(x-c_2)y\bar{u}_xdx\right)+o(1).
\end{equation}
Using integration by parts in \eqref{C2-Fifthest2} with the help of \eqref{C2-Thirest1}, we get 
\begin{equation}\label{C2-Fifthest3}
\int_{c_1}^{c_2}\abs{\la u}^2dx+\int_{c_1}^{c_2}\abs{u_x}^2dx\leq 2\la\abs{c_0}(c_2-c_1)\int_{c_1}^{c_2}\abs{y}\abs{u_x}+o(1).
\end{equation}
Applying Young's inequality in \eqref{C2-Fifthest3}, we get 
\begin{equation}\label{C2-Fifthest4}
\int_{c_1}^{c_2}\abs{\la u}^2dx+\int_{c_1}^{c_2}\abs{u_x}^2dx\leq \abs{c_0}(c_2-c_1)\int_{c_1}^{c_2}\abs{u_x}^2dx+\abs{c_0}(c_2-c_1)\int_{c_1}^{c_2}\abs{\la y}^2dx+o(1). 
\end{equation}
Using \eqref{C2-Fourthest1} in \eqref{C2-Fifthest4}, we get 
\begin{equation}\label{C2-Fifthest5}
\left(1-\abs{c_0}(c_2-c_1)\right)\int_{c_1}^{c_2}\left(\abs{\la u}^2+|u_x|^2\right)dx\leq o(1). 
\end{equation}
Finally, from the above estimation, \eqref{SSC2} and  \eqref{C2-Fourthest1}, we get the desired result \eqref{C2-Fifthest1}. The proof has been completed. 
\end{proof}
\begin{lemma}\label{C2-sixthest}
Let $0<\delta<\frac{c_2-c_1}{2}$. The solution $U\in D(\AA)$ of system \eqref{pol3}-\eqref{pol6} satisfies the following estimations
\begin{equation}\label{C2-sixthest1}
\int_{c_1+\delta}^{c_2-\delta}\abs{y_x}^2dx=o(1). 
\end{equation}
\end{lemma}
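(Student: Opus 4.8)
The goal is to show $\int_{c_1+\delta}^{c_2-\delta}|y_x|^2\,dx=o(1)$, complementing Lemma~\ref{C2-Fifthest} which already controls $\int_{c_1}^{c_2}|\la u|^2\,dx$, $\int_{c_1}^{c_2}|\la y|^2\,dx$ and $\int_{c_1}^{c_2}|u_x|^2\,dx$. The natural tool is a cut-off multiplier localized strictly inside $(c_1,c_2)$: fix $\psi\in C^1([c_1,c_2])$ with $\psi=1$ on $(c_1+\delta,c_2-\delta)$ and $\psi=0$ near $c_1$ and $c_2$, $0\le\psi\le1$. I would test the $y$-equation \eqref{Combination2} against $\psi\overline{y}$ (not a derivative multiplier, since we want to produce $\int\psi|y_x|^2$ while the boundary terms vanish automatically because $\psi$ is compactly supported in the interior).

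Carrying this out: multiply \eqref{Combination2} by $\psi\overline{y}$, integrate over $(c_1,c_2)$, and integrate by parts in the $y_{xx}$-term. Since $\psi$ vanishes at both endpoints, the boundary terms drop, and one gets
$$
\int_{c_1}^{c_2}\psi|y_x|^2\,dx=\int_{c_1}^{c_2}|\la|^2\psi|y|^2\,dx-\Re\!\int_{c_1}^{c_2}\psi'\,y_x\overline{y}\,dx+\Re\!\left(i\la c_0\int_{c_1}^{c_2}\psi\,u\overline{y}\,dx\right)+\text{(terms from the RHS of \eqref{Combination2})}.
$$
Now estimate each term: the first is $o(1)$ by the bound on $\int_{c_1}^{c_2}|\la y|^2\,dx$ from \eqref{C2-Fifthest1}; the second is bounded by $\|\psi'\|_\infty\|y_x\|_{L^2}\|y\|_{L^2}$, and since $\|y\|_{L^2}=O(\la^{-1})$ and $\|y_x\|_{L^2}=O(1)$ (from $\|U\|_\HH=1$), this is $O(\la^{-1})=o(1)$; the coupling term is bounded by $|\la||c_0|\|\psi\|_\infty\|u\|_{L^2}\|y\|_{L^2}$, which is $|\la|\cdot O(\la^{-1})\cdot O(\la^{-1})=O(\la^{-1})=o(1)$ — here I use $\|u\|_{L^2}=O(\la^{-1})$, which follows from $\int_{c_1}^{c_2}|\la u|^2=o(1)$ together with the earlier interval estimates; and the RHS terms are handled because $\|F\|_\HH=o(1)$ divided by positive powers of $\la$, paired against $\psi\overline{y}$ with $\|y\|_{L^2}=O(\la^{-1})$, giving $o(1)$. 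Collecting, $\int_{c_1}^{c_2}\psi|y_x|^2\,dx=o(1)$, and since $\psi\equiv1$ on $(c_1+\delta,c_2-\delta)$ and $\psi\ge0$, we get $\int_{c_1+\delta}^{c_2-\delta}|y_x|^2\,dx\le\int_{c_1}^{c_2}\psi|y_x|^2\,dx=o(1)$.

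The only mildly delicate point is the coupling term $\Re(i\la c_0\int\psi u\overline{y})$: one must be sure it is genuinely $o(1)$ and not merely $O(1)$. This is where having $\|u\|_{L^2(c_1,c_2)}=o(\la^{-1})$ — equivalently $\int_{c_1}^{c_2}|\la u|^2=o(1)$ from Lemma~\ref{C2-Fifthest} — is essential; with only $O(\la^{-1})$ on both factors the product times $\la$ would be $O(\la^{-1})$ anyway, so in fact $O$-bounds suffice here, but I would state it carefully. An alternative, if one prefers to avoid $\|u\|_{L^2}$ bounds, is to substitute \eqref{pol5} to replace $\la y$ by $z+\la^{-1}f_3$ and exploit $\|z\|_{L^2}=O(1)$; either route works. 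I expect no real obstacle — this is a routine interior-regularity multiplier estimate, and the lemma is a preparatory step before piecing together the final contradiction $\|U\|_\HH=o(1)$.
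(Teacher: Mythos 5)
Your proof is correct and is essentially the paper's own argument: the paper uses exactly the same cut-off multiplier (its $\theta_4\overline{y}$, equal to $1$ on $(c_1+\delta,c_2-\delta)$ and supported in $(c_1,c_2)$), integrates \eqref{Combination2} by parts, and absorbs the $|\la y|^2$, $\psi' y_x\overline{y}$, coupling, and right-hand-side terms using \eqref{C2-Fifthest1} and the a priori bounds, just as you do. The minor variations (integrating over $(c_1,c_2)$ rather than $(0,L)$, and noting that $O$-bounds already suffice for the coupling term) do not change the argument.
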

\begin{proof}
First, we fix a  cut-off function $\theta_4\in C^1([0,L])$ such that 
\begin{equation}\label{C2-theta4}
\theta_4(x):=\left\{\begin{array}{clc}
1&\text{if}&x\in (c_1+\delta,c_2-\delta),\\
0&\text{if}&x\in (0,c_1)\cup (c_2,L),\\
0\leq \theta_4\leq 1&&\text{elsewhere}.
\end{array}
\right.
\end{equation}
Multiplying \eqref{Combination2} by $\theta_4\bar{y}$, integrating over $(0,L)$ and using integration by parts, we get 
\begin{equation}\label{C2-sixthest2*}
\int_{c_1}^{c_2}\theta_4\abs{\la y}^2dx-\int_{0}^{L}\theta_4\abs{y_x}^2dx-\int_0^L\theta_4'y_x\bar{y}dx+i\la c_0\int_{c_1}^{c_2}\theta_4u\bar{y}dx=\frac{o(1)}{\la^2}. 
\end{equation}
Using  \eqref{C2-Fifthest1} and the definition of $\theta_4$, we get 
\begin{equation}\label{C2-sixthest3}
\int_{c_1}^{c_2}\theta_4\abs{\la y}^2dx=o(1),\quad \int_0^L\theta_4'y_x\bar{y}dx=o(\la^{-1}),\quad i\la c_0\int_{c_1}^{c_2}\theta_4u\bar{y}dx=o(\la^{-1}).
\end{equation}
Finally, Inserting \eqref{C2-sixthest3} in \eqref{C2-sixthest2*}, we get the desired result \eqref{C2-sixthest1}. The proof has been completed. 
\end{proof}
\begin{lemma}\label{C2-seventhest}
The solution $U\in D(\AA)$ of system \eqref{pol3}-\eqref{pol6} satisfies the following estimations
\begin{equation}\label{C2-sixthest1}
\int_0^{c_1+\varepsilon}\abs{\la y}^2dx,\int_{0}^{c_1+\varepsilon}\abs{y_x}^2dx,\int_{c_2-\varepsilon}^L\abs{\la y}^2dx,\int_{c_2-\varepsilon}^L\abs{y_x}^2dx,\int_{c_2}^{L}\abs{\la u}^2dx,\int_{c_2}^{L}\abs{u_x}^2dx=o(1).
\end{equation}
\end{lemma}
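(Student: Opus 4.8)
The plan is to run an ``energy/flux'' propagation argument for the uncoupled, undamped wave ODEs that $y$ and $u$ satisfy away from the damping and coupling sets, crossing the interfaces $c_1,c_2$ freely. In Case~3 (under \eqref{SSC2}) one has $d\equiv 0$, so $y\in H^2(0,L)\subset C^1[0,L]$, and $b\equiv 0$ on $(b_2,L)$, so $u\in H^2(b_2,L)\subset C^1[b_2,L]\supset C^1[c_1,L]$. Inserting \eqref{pol3} and \eqref{pol5} into \eqref{pol4}--\eqref{pol6}, one obtains (writing $R$ for generic data terms whose $L^2(0,L)$-norm is $o(\la^{-1})$, built from $f_1,f_2,f_3,f_4$ divided by $\la$ or $\la^2$): on $(0,c_1)\cup(c_2,L)$, $\la^2 y+y_{xx}=R$; on $(c_1,c_2)$, $\la^2 y+y_{xx}=-i\la c_0 u+R$ and $\la^2 u+u_{xx}=i\la c_0 y+R$; on $(c_2,L)$, $\la^2 u+u_{xx}=R$. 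By Lemma~\ref{C2-Fifthest} we have $\|\la u\|_{L^2(c_1,c_2)}=o(1)$ and $\|\la y\|_{L^2(c_1,c_2)}=o(1)$, so the right-hand side of each of these equations is $o(1)$ in $L^2$ of the relevant subinterval.

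First I would treat $y$. Set $e_y(x):=\abs{\la y(x)}^2+\abs{y_x(x)}^2$, a continuous function on $[0,L]$. On each subinterval the ODE gives $e_y'(x)=2\Re\!\left(\overline{y_x(x)}\,(\la^2 y(x)+y_{xx}(x))\right)$; hence, by Cauchy--Schwarz, $\|y_x\|_{L^2(0,L)}\le 1$ (since $\|U\|_\HH=1$ and $a=1$), and the $o(1)$ bounds on the right-hand sides just listed, $\int_0^L\abs{e_y'(x)}\,dx=o(1)$. Thus $\sup_{x,x'\in[0,L]}\abs{e_y(x)-e_y(x')}=o(1)$. On the other hand, $\int_{c_1}^{c_2}\abs{\la y}^2dx=o(1)$ by Lemma~\ref{C2-Fifthest} and $\int_{c_1+\delta}^{c_2-\delta}\abs{y_x}^2dx=o(1)$ by Lemma~\ref{C2-sixthest}, so $\int_{c_1+\delta}^{c_2-\delta}e_y\,dx=o(1)$; averaging the inequality $e_y(x)\le e_y(x')+o(1)$ over $x'\in(c_1+\delta,c_2-\delta)$ gives $\sup_{[0,L]}e_y=o(1)$. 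Integrating over $(0,c_1+\varepsilon)$ and over $(c_2-\varepsilon,L)$ then yields the four estimates for $y$ in the statement.

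For $u$ I would argue on $[c_1,L]\subset[b_2,L]$. Put $e_u(x):=\abs{\la u(x)}^2+\abs{u_x(x)}^2$, continuous there; again $e_u'(x)=2\Re\!\left(\overline{u_x(x)}\,(\la^2 u(x)+u_{xx}(x))\right)$, and combining $\|u_x\|_{L^2(c_1,c_2)}=o(1)$ and $\|\la y\|_{L^2(c_1,c_2)}=o(1)$ from Lemma~\ref{C2-Fifthest} on $(c_1,c_2)$ with the $o(\la^{-1})$ right-hand side on $(c_2,L)$ gives $\int_{c_1}^{L}\abs{e_u'(x)}\,dx=o(1)$. Since $e_u(c_1)=\abs{\la u(c_1)}^2+\abs{u_x(c_1)}^2=o(1)$ by \eqref{C2-Thirest1}, we conclude $\sup_{[c_1,L]}e_u=o(1)$, and integrating over $(c_2,L)$ gives the last two estimates. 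Putting this together with \eqref{C2-dissipation1}, Lemma~\ref{C2-Secest}, Lemma~\ref{C2-Fifthest}, Lemma~\ref{C2-sixthest} and the relations $v=i\la u-\la^{-2}f_1$, $z=i\la y-\la^{-2}f_3$ gives $\|U\|_\HH^2=\int_0^L\left(\abs{v}^2+\abs{u_x}^2+\abs{z}^2+\abs{y_x}^2\right)dx=o(1)$, contradicting \eqref{pol1}; this verifies \eqref{H2} with $\ell=2$ and finishes the proof of Theorem~\ref{2pol}.

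The main obstacle is ensuring that the two coupling terms $i\la c_0 u$ and $i\la c_0 y$ appearing on $(c_1,c_2)$ are actually $o(1)$ in $L^2(c_1,c_2)$ rather than merely bounded; this is exactly the content of Lemma~\ref{C2-Fifthest}, which is why those estimates had to be established first. A secondary point requiring care is the $C^1$-continuity of $y$ (and of $u$) across the interface points $c_1,c_2$ --- guaranteed by $d\equiv 0$ and by $b\equiv 0$ on $(b_2,L)$ respectively --- which is what allows the scalar quantities $e_y$ and $e_u$ to be propagated across all of $[0,L]$ and $[c_1,L]$ without generating interface jump terms.
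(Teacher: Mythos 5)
Your argument is correct, and it reaches the stated estimates by a route that differs from the paper's. The paper proves this lemma with weighted multiplier identities: it multiplies the reduced $y$-equation by $2q\overline{y}_x$ (and the reduced $u$-equation by $2q\overline{(u_x+bv_x)}$) for $q\in C^1([0,L])$ with $q(0)=q(L)=0$, integrates over $(0,L)$, and uses $\|\la u\|_{L^2(c_1,c_2)}=o(1)$, $\|\la y\|_{L^2(c_1,c_2)}=o(1)$ to kill the coupling terms, obtaining $\int_0^L q'\left(\abs{\la y}^2+\abs{y_x}^2\right)dx=o(1)$ and its $u$-analogue; it then takes $q=x\theta_5+(x-L)\theta_6$ (resp. $q=(x-L)\theta_7$), so that $q'=1$ on the target subintervals while the contribution of the transition region $(c_1+\varepsilon,c_2-\varepsilon)$ is absorbed by Lemmas \ref{C2-Fifthest} and \ref{C2-sixthest}. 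You instead propagate the pointwise energy densities $e_y=\abs{\la y}^2+\abs{y_x}^2$ and $e_u=\abs{\la u}^2+\abs{u_x}^2$, showing $\int\abs{e'}dx=o(1)$ from the same $o(1)$ right-hand sides, and anchoring $e_y$ by its $o(1)$ average over $(c_1+\delta,c_2-\delta)$ and $e_u$ by the boundary estimates \eqref{C2-Thirest1} at $c_1$ (which the paper's proof of this particular lemma does not need). Both arguments consume exactly the same prior lemmas; yours additionally requires the $C^1$-regularity of $y$ on $[0,L]$ and of $u$ on $[b_2,L]$, which you correctly justify from $d\equiv 0$ and $b\equiv 0$ on $(b_2,L)$ in Case \eqref{C3}, and in return it yields slightly stronger, uniform pointwise $o(1)$ bounds on the energy densities, whereas the paper's cut-off identities stay at the level of integrated quantities and avoid any pointwise evaluation of $y_x$, $u_x$.
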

\begin{proof}
Let $q\in C^1([0,L])$ such that $q(0)=q(L)=0$. Multiplying \eqref{Combination1} by $2q\bar{y}_x$ integrating over $(0,L)$, using \eqref{C2-Fifthest1}, and the fact that $y_x$ is uniformly bounded in $L^2(0,L)$ and $\|F\|_{\mathcal{H}}=o(1)$, we get 
\begin{equation}\label{C2-sixthest2}
\int_0^{L}q'\left(\abs{\la y}^2+\abs{y_x}^2\right)dx=o(1).
\end{equation}
Now, take $q(x)=x\theta_5(x)+(x-L)\theta_6(x)$ in \eqref{C2-sixthest2}, such that 
$$
\theta_5(x):=\left\{\begin{array}{ccc}
1&\text{in}&(0,c_1+\varepsilon),\\
0&\text{in}&(c_2-\varepsilon,L),\\
0\leq \theta_1\leq 1&\text{in}&(c_1+\varepsilon,c_2-\varepsilon),
\end{array}
\right. \quad\text{and}\quad 
\theta_2(x)\left\{\begin{array}{ccc}
1&\text{in}&(c_2-\varepsilon,L),\\
0&\text{in}&(0,c_1+\varepsilon),\\
0\leq \theta_2\leq 1&\text{in}&(c_1+\varepsilon,c_2-\varepsilon).
\end{array}
\right.
$$
Then, we obtain the first four estimations in \eqref{C2-sixthest1}. Now, multiplying \eqref{Combination1} by $2q\left(\overline{u_x+bv_x}\right)$ integrating over $(0,L)$ and using the fact that $u_x$ is uniformly bounded in $L^2(0,L)$, we get 
\begin{equation}
\int_0^Lq'\left(\abs{\la u}^2+\abs{u_x}^2\right)dx=o(1). 
\end{equation}
By taking $q(x)=(x-L)\theta_7(x)$, such that 
$$
\theta_7(x)=\left\{\begin{array}{ccc}
1&\text{in}&(c_2,L),\\
0&\text{in}&(0,c_1),\\
0\leq \theta_7\leq 1&\text{in}&(c_1,c_2),
\end{array}
\right.
$$
 we get the the last two estimations in \eqref{C2-sixthest1}. The proof has been completed. 
\end{proof}

\noindent \textbf{Proof of Theorem \ref{2pol}.} Using \eqref{C2-dissipation1}, Lemmas \ref{C2-Secest}, \ref{C2-Fifthest}, \ref{C2-sixthest} and \ref{C2-seventhest}, we get $\|U\|_{\mathcal{H}}=o(1)$,   which contradicts \eqref{pol1}. Consequently, condition ${\rm (H2)}$ holds. This implies the energy decay estimation \eqref{Energypol2}

\section{Indirect Stability in the multi-dimensional case}\label{secnd}
\noindent In this section, we study the well-posedness and the strong stability of system \eqref{ND-1}-\eqref{ND-5}.
\subsection{Well-posedness}\label{wpnd} In this subsection, we will establish the well-posedness of \eqref{ND-1}-\eqref{ND-5} by usinf semigroup approach. The energy of system \eqref{ND-1}-\eqref{ND-5} is given by
\begin{equation}\label{ND-energy}
E(t)=\frac{1}{2}\int_0^L\left(\abs{u_t}^2+\abs{\nabla u}^2+\abs{y_t}^2+\abs{\nabla y}^2\right)dx.
\end{equation}
Let $(u,u_t,y,y_t)$ be a regular solution of \eqref{ND-1}-\eqref{ND-5}. Multiplying \eqref{ND-1} and \eqref{ND-2} by $\overline{u_t}$ and $\overline{y_t}$ respectively, then using the boundary conditions \eqref{ND-3}, we get 
\begin{equation}\label{ND-denergy}
E'(t)=-\int_{\Omega}b|\nabla u_{t}|^2dx,
\end{equation}
using the definition of $b$, we get $E'(t)\leq 0$. Thus, system \eqref{ND-1}-\eqref{ND-5} is dissipative in the sense that its energy is non-increasing with respect to time $t$. Let us define the energy space $\mathcal{H}$ by 
$$
\mathcal{H}=\left(H_0^1(\Omega)\times L^2(\Omega)\right)^2.
$$
The energy space $\mathcal{H}$ is equipped with the inner product defined by 
$$
\left<U,U_1\right>_{\mathcal{H}}=\int_{\Omega}v\overline{v_1}dx+\int_{\Omega}\nabla{u}\nabla{\overline{u_1}}dx+\int_{\Omega}z\overline{z_1}dx+\int_{\Omega}\nabla{y}\cdot \nabla{\overline{y_1}}dx,
$$
for all $U=(u,v,y,z)^\top$ and $U_1=(u_1,v_1,y_1,z_1)^\top$ in $\mathcal{H}$. We define the unbounded linear operator $\mathcal{A}_d:D\left(\mathcal{A}_d\right)\subset \mathcal{H}\longrightarrow \mathcal{H}$ by 
$$
D(\mathcal{A}_d)=\left\{
U=(u,v,y,z)^\top\in \mathcal{H};\ v,z\in H_0^1(\Omega),\  \divv(u_x+bv_x)\in L^2(\Omega),\ \Delta y \in L^2 (\Omega)
\right\}
$$
and 
$$
\mathcal{A}_d U=\begin{pmatrix}
v\\[0.1in] \divv(\nabla u+b\nabla v)-cz\\[0.1in] z\\ \Delta y+cv
\end{pmatrix}, \ \forall U=(u,v,y,z)^\top \in D(\mathcal{A}_d).
$$
If $U=(u,u_t,y,y_t)$ is a regular solution of system \eqref{ND-1}-\eqref{ND-5}, then we rewrite this system as the following first order evolution equation 
\begin{equation}\label{ND-evolution}
U_t=\mathcal{A}_dU,\quad U(0)=U_0, 
\end{equation}
where $U_0=(u_0,u_1,y_0,y_1)^{\top}\in \HH$. For all $U=(u,v,y,z)^{\top}\in D(\mathcal{A}_d )$, we have 
$$
\Re\left<\mathcal{A}_d U,U\right>_{\mathcal{H}}=-\int_{\Omega}b\abs{\nabla v}^2dx\leq 0,
$$
which implies that $\mathcal{A}_d$ is dissipative. Now, similar to Proposition 2.1 in \cite{akil2021ndimensional}, we can prove that there exists a unique solution $U=(u,v,y,z)^{\top}\in D(\mathcal{A}_d)$ of 
$$
-\AA_d U=F,\quad \forall F=(f^1,f^2,f^3,f^4)^\top\in \mathcal{H}.
$$
Then $0\in \rho(\mathcal{A}_d)$ and $\mathcal{A}_d$ is an isomorphism and since $\rho(\mathcal{A}_d)$ is open in $\mathbb{C}$ (see Theorem 6.7 (Chapter III) in \cite{Kato01}),  we easily get $R(\lambda I -\mathcal{A}_d) = {\mathcal{H}}$ for a sufficiently small $\lambda>0 $. This, together with the dissipativeness of $\mathcal{A}_d$, imply that   $D\left(\mathcal{A}_d\right)$ is dense in ${\mathcal{H}}$   and that $\mathcal{A}_d$ is m-dissipative in ${\mathcal{H}}$ (see Theorems 4.5, 4.6 in  \cite{Pazy01}).
According to Lumer-Phillips theorem (see \cite{Pazy01}), then the operator $\AA_d$ generates a $C_{0}$-semigroup of contractions $e^{t\AA_d}$ in $\HH$ which gives the well-posedness of \eqref{ND-evolution}. Then, we have the following result:
\begin{theoreme}{\rm
For all $U_0 \in \HH$,  system \eqref{eq-2.9} admits a unique weak solution $$U(t)=e^{t\AA_d}U_0\in C^0 (\R_+ ,\HH).
	$$ Moreover, if $U_0 \in D(\AA)$, then the system \eqref{eq-2.9} admits a unique strong solution $$U(t)=e^{t\AA_d}U_0\in C^0 (\R_+ ,D(\AA_d))\cap C^1 (\R_+ ,\HH).$$}
\end{theoreme}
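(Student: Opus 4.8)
The statement is a direct consequence of the generation result just obtained together with classical linear semigroup theory, so the plan is essentially to package what precedes it. Since $\AA_d$ has been shown to be m-dissipative and densely defined on $\HH$, the Lumer--Phillips theorem (see \cite{Pazy01}) guarantees that $\AA_d$ generates a $C_0$-semigroup of contractions $\left(e^{t\AA_d}\right)_{t\geq 0}$ on $\HH$. Given $U_0\in\HH$ one then sets $U(t):=e^{t\AA_d}U_0$ and invokes the standard theory of abstract Cauchy problems: the map $t\mapsto e^{t\AA_d}U_0$ is continuous from $\R_+$ into $\HH$ and is the unique mild (weak) solution of \eqref{ND-evolution}, which yields the first assertion with $U\in C^0(\R_+,\HH)$.

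For the second assertion I would use that $D(\AA_d)$ is invariant under the semigroup and that, for $U_0\in D(\AA_d)$, the orbit $t\mapsto e^{t\AA_d}U_0$ is continuously differentiable with $\frac{d}{dt}e^{t\AA_d}U_0=\AA_d e^{t\AA_d}U_0$, hence belongs to $C^1(\R_+,\HH)\cap C^0(\R_+,D(\AA_d))$ and solves \eqref{ND-evolution} in the strong sense. Uniqueness in both cases follows from the semigroup property, since for any solution $U$ the map $s\mapsto e^{(t-s)\AA_d}U(s)$ is constant on $[0,t]$; no contradiction argument or energy estimate is needed here, as dissipativity has already been used to obtain contractivity.

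The only place with genuine content — and the step I would single out as the main point to verify — is the range condition underlying m-dissipativity, namely the unique solvability of $-\AA_d U=F$ (equivalently $R(\lambda I-\AA_d)=\HH$ for small $\lambda>0$) for every $F=(f^1,f^2,f^3,f^4)^\top\in\HH$. This reduces to an elliptic problem for $(u,y)$ coupled through the zero-order terms involving $c$, which one solves by writing the weak formulation on $H_0^1(\Omega)\times H_0^1(\Omega)$ and applying the Lax--Milgram lemma: coercivity comes from the $\nabla u$, $\nabla y$ terms together with the $b|\nabla v|^2$ contribution, while the skew-symmetric coupling through $cz$ and $cv$ does not affect coercivity. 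Once $u,y$ are obtained one recovers $v=\lambda u-f^1\in H_0^1(\Omega)$ and $z=\lambda y-f^3\in H_0^1(\Omega)$, and checks $\divv(\nabla u+b\nabla v)\in L^2(\Omega)$ and $\Delta y\in L^2(\Omega)$, so that $U\in D(\AA_d)$; the openness of $\rho(\AA_d)$ in $\mathbb{C}$ (\cite{Kato01}) then upgrades $0\in\rho(\AA_d)$ to $\lambda\in\rho(\AA_d)$ for sufficiently small $\lambda>0$. This is exactly the argument of Proposition 2.1 in \cite{akil2021ndimensional}, to which the text already defers.
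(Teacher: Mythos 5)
Your proposal follows essentially the same route as the paper: m-dissipativity of $\mathcal{A}_d$ (dissipativity plus solvability of $-\mathcal{A}_d U=F$ as in Proposition 2.1 of \cite{akil2021ndimensional}, openness of $\rho(\mathcal{A}_d)$, and Theorems 4.5--4.6 of \cite{Pazy01}), then the Lumer--Phillips theorem and the standard theory of abstract Cauchy problems to obtain the unique weak solution for $U_0\in\mathcal{H}$ and the strong solution for $U_0\in D(\mathcal{A}_d)$. Your additional Lax--Milgram sketch for the range condition is consistent with the argument the paper delegates to the cited reference, so there is no gap.
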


\subsection{Strong Stability }\label{Strong Stability-ND}
In this subsection, we will prove the strong stability of system \eqref{ND-1}-\eqref{ND-5}. First, we fix the following notations
$$
\widetilde{\Omega}=\Omega-\overline{\omega_c},\quad \Gamma_1=\partial \omega_c-\partial \Omega\quad \text{and}\quad \Gamma_0=\partial\omega_c-\Gamma_1. 
$$ 
\begin{figure}
\begin{center}
\begin{tikzpicture}
\draw[black] (0,0) (45:4cm) arc (45:315:4cm);
\draw[color=blue] (-1.5,0) circle [radius=1];
\draw[red] (0,0) (-45:4cm)   arc (-45:45:4cm);
\draw [purple] (45:4)   arc (-45:45:-4);
\node[black,left] at (1,2.65){\scalebox{1.5}{$\widetilde{\Omega}$}};
\node[black,left] at (0,-4.65){\scalebox{1.5}{$\Omega$}};
\node[red,left] at (3,0){\scalebox{1.5}{$\omega_c$}};
\node[blue,left] at (-1,0){\scalebox{1.5}{$\omega_b$}};
\node[black,left] at (6,0){\scalebox{1}{$\bullet$}};
\draw[dashed] (2.83,2.83) -- (5.77,0);
\draw[dashed] (2.83,-2.83) -- (5.77,0);
\node[black] at (6.25,0) {\scalebox{1.25}{$x_0$}};
\node[red] at (4.25,-0.75) {\scalebox{1.25}{$\Gamma_0$}};
\node[purple,left] at (1.7,0) {\scalebox{1.25}{$\Gamma_1$}};
\draw[->] (4,0.02)--(5,0.02);
\node[black] at (4,0){\scalebox{1}{$\bullet$}};
\node[black] at (3.7,0){\scalebox{1.25}{$x$}};
\node[black] at (4.35,0.25) {\scalebox{1.25}{$\nu$}};
\end{tikzpicture}
\end{center}
\caption{Geometric description of the sets $\omega_b$ and $\omega_c$}\label{p7-Fig4}
\end{figure}
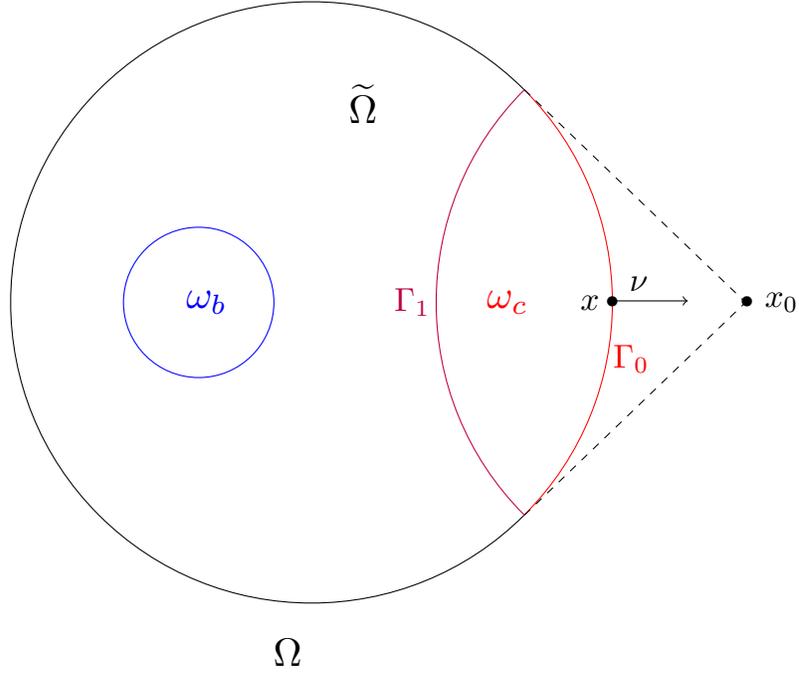
\noindent Let $x_0\in \mathbb{R}^{d}$ and $m(x)=x-x_0$ and suppose that (see Figure \ref{p7-Fig4})
\begin{equation}\tag{${\rm GC}$}\label{Geometric Condition}
m\cdot \nu\leq 0\quad \text{on}\quad \Gamma_0=\left(\partial\omega_c\right)-\Gamma_1.
\end{equation}
The main result of this section is the following theorem
\begin{theoreme}\label{Strong-Stability-ND}
Assume that \eqref{Geometric Condition} holds and 
\begin{equation}\label{GC-Condition}\tag{${\rm SSC}$}
\|c\|_{\infty}\leq \min\left\{\frac{1}{\|m\|_{\infty}+\frac{d-1}{2}},\frac{1}{\|m\|_{\infty}+\frac{(d-1)C_{p,\omega_c}}{2}}\right\},
\end{equation}
where $C_{p,\omega_c}$ is the Poincarr\'e constant on $\omega_c$. Then, the $C_0-$semigroup of contractions $\left(e^{t\mathcal{A}_d}\right)$ is strongly stable in $\mathcal{H}$; i.e. for all $U_0\in \mathcal{H}$, the solution of \eqref{ND-evolution} satisfies 
$$
\lim_{t\to +\infty}\|e^{t\mathcal{A}_d}U_0\|_{\mathcal{H}}=0.
$$
\end{theoreme}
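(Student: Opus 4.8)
The plan is to apply the Arendt--Batty criterion (Theorem~\ref{App-Theorem-A.2}): since $0\in\rho(\mathcal{A}_d)$, it suffices to show that $i\la I-\mathcal{A}_d$ is both injective and surjective for every $\la\in\R^{\ast}$, so that $i\R\subset\rho(\mathcal{A}_d)$ and strong stability follows. Surjectivity, $R(i\la I-\mathcal{A}_d)=\HH$, I would obtain as in the one-dimensional case: decouple the resolvent equation into a second-order elliptic system for $(u,y)$ and conclude by the Fredholm alternative, using the compact embedding $H^1_0(\Omega)\hookrightarrow L^2(\Omega)$ together with the injectivity; this is routine (cf. \cite{akil2021ndimensional}). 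So the real work is to prove $\ker(i\la I-\mathcal{A}_d)=\{0\}$.

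Let $U=(u,v,y,z)^{\top}\in D(\mathcal{A}_d)$ solve $\mathcal{A}_dU=i\la U$ with $\la\neq0$, so $v=i\la u$, $z=i\la y$, and
\[
\la^{2}u+\divv\bigl((1+i\la b)\nabla u\bigr)-i\la c\,y=0,\qquad \la^{2}y+\Delta y+i\la c\,u=0\quad\text{in }\Omega,
\]
with $u=y=0$ on $\Gamma$. Taking $\Re\langle\mathcal{A}_dU,U\rangle_{\HH}=\Re(i\la\|U\|_{\HH}^{2})=0$ gives $\int_{\Omega}b|\nabla v|^{2}dx=0$, hence $\nabla u=\nabla v=0$ in $\omega_b$. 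On $\omega_b$ one has $c\equiv0$ and, since the damping and coupling supports are disjoint (so $b\equiv0$ on $\overline{\omega_c}$ and $\mathrm{supp}\,b\subset\overline{\omega_b}\subset\widetilde{\Omega}$), the first equation reduces there to $i\la v=0$, whence $u=v=0$ in $\omega_b$. On $\widetilde{\Omega}$ one has $c\equiv0$, so $u$ satisfies an elliptic equation which away from $\omega_b$ is the Helmholtz equation $\la^{2}u+\Delta u=0$; as $u$ vanishes together with its gradient on the open set $\omega_b$, the unique continuation principle propagates $u\equiv0$ to all of $\widetilde{\Omega}$ (assuming, as usual, $\Omega$ connected). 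In particular $u=0$ on $\Gamma_0$, and since $b\equiv0$ near $\Gamma_1$ so that $u\in H^{2}$ there by interior elliptic regularity, also $u=\partial_{\nu}u=0$ on the interface $\Gamma_1$.

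The crux is to push $u\equiv0$ into $\omega_c$, and this is where \eqref{Geometric Condition} and \eqref{GC-Condition} enter. On $\omega_c$, $b=0$, hence $\la^{2}u+\Delta u=i\la c\,y$ with $u=0$ on $\partial\omega_c$ and $\partial_{\nu}u=0$ on $\Gamma_1$. I would run the Rellich--Pohozaev multiplier: multiply this identity by $\overline{2\,m\cdot\nabla u+(d-1)u}$ with $m(x)=x-x_0$, integrate over $\omega_c$ and take real parts. The boundary contributions on $\Gamma_1$ all carry a factor $u$ or $\partial_{\nu}u$ and vanish, while on $\Gamma_0$ (where $u=0$) only $\int_{\Gamma_0}(m\cdot\nu)|\partial_{\nu}u|^{2}dS$ survives, which is $\le0$ by \eqref{Geometric Condition}; what remains is
\[
\int_{\omega_c}|\nabla u|^{2}dx+\la^{2}\int_{\omega_c}|u|^{2}dx\le-\Re\int_{\omega_c}i\la c\,y\,\overline{\bigl(2\,m\cdot\nabla u+(d-1)u\bigr)}\,dx.
\]
To close the estimate I would control $\|y\|_{L^{2}(\omega_c)}$ by $\|u\|_{L^{2}(\omega_c)}$ via the identity $\int_{\omega_c}c|u|^{2}dx=\int_{\omega_c}c|y|^{2}dx$ (obtained by testing the first equation by $\bar y$ and the second by $\bar u$, subtracting the conjugate of the second from the first, and using $b\nabla u\equiv0$, which holds since $\nabla u=0$ on $\widetilde{\Omega}\supset\mathrm{supp}\,b$), then apply Young's inequality to the $m\cdot\nabla u$ part of the multiplier and Young's inequality followed by the Poincar\'e inequality on $\omega_c$ (legitimate since $u=0$ on $\Gamma_0$ and $\meas\Gamma_0>0$) to the $(d-1)u$ part. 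Under \eqref{GC-Condition} the resulting constant in front of $\int_{\omega_c}\bigl(|\nabla u|^{2}+\la^{2}|u|^{2}\bigr)dx$ is strictly less than $1$, forcing $u\equiv0$ in $\omega_c$. Hence $u\equiv0$ and $v\equiv0$ on $\Omega$; the second component of $\mathcal{A}_dU=i\la U$ then reads $cz=0$, so $z=y=0$ on $\omega_c$, and as $y$ solves $\la^{2}y+\Delta y=0$ in $\Omega$ while vanishing on the open set $\omega_c$, unique continuation gives $y\equiv0$, whence $z\equiv0$ and $U=0$.

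The main obstacle is precisely this last step: verifying that the Rellich--Pohozaev identity on $\omega_c$ genuinely leaves only the single boundary integral over $\Gamma_0$ with the favorable sign supplied by \eqref{Geometric Condition} — which hinges on $\partial_{\nu}u=0$ on $\Gamma_1$, itself a by-product of the unique continuation in $\widetilde{\Omega}$ — and that the Young/Poincar\'e bookkeeping of the coupling term produces exactly the threshold \eqref{GC-Condition}, the two terms in the minimum arising from estimating respectively the $\la^{2}\|u\|^{2}_{L^{2}(\omega_c)}$ part and the $\|\nabla u\|^{2}_{L^{2}(\omega_c)}$ part of the left-hand side (the latter invoking the Poincar\'e constant $C_{p,\omega_c}$). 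The remaining points — interior $H^{2}$-regularity of $u$ near $\Gamma_1$ and sufficient smoothness of $\partial\omega_c$ for the trace manipulations — are ensured by $\Gamma\in C^{2}$ together with $b\equiv0$ near $\overline{\omega_c}$.
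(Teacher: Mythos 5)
Your proposal follows essentially the same route as the paper's proof: the dissipation identity gives $b\nabla v=0$ in $\Omega$ and $\nabla u=0$ in $\omega_b$, unique continuation yields $u\equiv 0$ in $\widetilde{\Omega}$ and hence vanishing Cauchy data on $\Gamma_1$, and then the Rellich--Pohozaev multiplier on $\omega_c$ combined with the identity $\int_{\omega_c} c|u|^2dx=\int_{\omega_c} c|y|^2dx$, \eqref{Geometric Condition} and \eqref{GC-Condition} forces $u=y=0$ in $\omega_c$, after which unique continuation for $y$, the surjectivity argument and Arendt--Batty conclude --- the paper does exactly this, merely splitting your single multiplier $2\,m\cdot\nabla\bar u+(d-1)\bar u$ into two separate steps. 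One small repair: the hypotheses \eqref{ND-3}--\eqref{ND-4} do not literally assert that $b$ vanishes off $\overline{\omega_b}$ (only $b\geq b_0$ on $\omega_b$ and $b\geq 0$ elsewhere), so your appeals to ``$b\equiv 0$ on $\omega_c$ / near $\Gamma_1$'' should instead invoke the already-established fact $b\nabla v\equiv 0$ in $\Omega$, which kills the damping term globally, exactly as in the paper.
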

\begin{proof}
First, let us prove that \begin{equation}\label{ker}\ker (i\la I-\AA_d)=\{0\},\ \forall \la \in \R.\end{equation} Since $0\in \rho(\mathcal{A}_d)$, then we still need to show the result for $\lambda\in \mathbb{R}^{\ast}$. Suppose that there exists a real number $\lambda\neq 0$ and $U=(u,v,y,z)^\top\in D(\mathcal{A}_d)$, such that 
$$
\mathcal{A}_dU=i\la U.
$$
Equivalently, we have 
\begin{eqnarray}
v&=&i\la u,\label{ND-ST1}\\
\divv(\nabla u+b\nabla v)-cz&=&i\la v,\label{ND-ST2}\\
z&=&i\la y, \label{ND-ST3}\\
\Delta y+cv&=&i\la z.\label{ND-ST4}
\end{eqnarray}
Next, a straightforward computation gives 
$$
0=\Re\left<i\la U,U\right>_{\mathcal{H}}=\Re\left<\mathcal{A}_dU,U\right>_{\mathcal{H}}=-\int_{\Omega}b\abs{\nabla v}^2dx,
$$
consequently, we deduce that 
\begin{equation}\label{ND-ST5}
b\nabla v=0\ \ \text{in}\ \ \Omega \quad \text{and}\quad \nabla v= \nabla u=0 \quad \text{in}\quad \omega_b.
\end{equation}
 Inserting \eqref{ND-ST1} in \eqref{ND-ST2}, then using the definition of $c$, we get 
\begin{equation}\label{ND-ST6}
\Delta u=-\la^2 u\quad \text{in}\quad \omega_b.
\end{equation}
From \eqref{ND-ST5} we get $\Delta u=0$ in $\omega_b$ and from  \eqref{ND-ST6} and the fact that $\la\neq 0$, we get 
\begin{equation}\label{ND-ST7}
u=0\quad \text{in}\quad \omega_b.
\end{equation}
Now, inserting \eqref{ND-ST1} in \eqref{ND-ST2},  then using \eqref{ND-ST5}, \eqref{ND-ST7} and the definition of $c$, we get 
\begin{equation}\label{ND-ST8}
\begin{array}{rll}
\la^2u+\Delta u&=&0\ \ \text{in}\ \ \widetilde{\Omega},\\
u&=&0\ \ \text{in}\ \ \omega_b\subset \widetilde{\Omega}.
\end{array}
\end{equation}
Using Holmgren uniqueness theorem, we get
\begin{equation}\label{ND-ST9}
u=0\quad \text{in}\quad \widetilde{\Omega}. 
\end{equation}
It follows that
\begin{equation}\label{ND-ST10}
u=\frac{\partial u}{\partial\nu}=0\quad \text{on}\quad \Gamma_1. 
\end{equation}
Now, our aim is to show that $u=y=0$ in $\omega_c$. For this aim, inserting \eqref{ND-ST1} and \eqref{ND-ST3} in \eqref{ND-ST2} and \eqref{ND-ST4}, then using \eqref{ND-ST5}, we get the following system 
\begin{eqnarray}
\la^2u+\Delta u-i\la cy&=&0\quad \text{in}\ \Omega,\label{ND-ST11}\\
\la^2y+\Delta y+i\la cu&=&0\quad \text{in}\ \Omega,\label{ND-ST12}\\
u&=&0\quad \text{on}\ \partial\omega_c,\label{ND-ST13}\\
y&=&0\quad \text{on}\ \Gamma_0,\label{ND-ST14}\\
\frac{\partial u}{\partial \nu}&=&0\quad \text{on}\ \Gamma_1.\label{ND-ST15}
\end{eqnarray}
Let us prove \eqref{ker} by the following three steps:\\\linebreak 
\textbf{Step 1.} The aim of this step is to show that 
\begin{equation}\label{ND-Step1-1}
\int_{\Omega}c\abs{u}^2dx=\int_{\Omega}c\abs{y}^2dx.
\end{equation}
For this aim, multiplying \eqref{ND-ST11} and \eqref{ND-ST12} by $\bar{y}$ and $\bar{u}$ respectively, integrating over $\Omega$ and using Green's formula, we get 
\begin{eqnarray}
\la^2\int_{\Omega}u\bar{y}dx-\int_{\Omega}\nabla u\cdot \nabla{\bar{y}}dx-i\la \int_{\Omega}c\abs{y}^2dx&=&0,\label{ND-Step1-2}\\
\la^2\int_{\Omega}y\bar{u}dx-\int_{\Omega}\nabla y\cdot \nabla{\bar{u}}dx+i\la \int_{\Omega}c\abs{u}^2dx&=&0.\label{ND-Step1-3}
\end{eqnarray}
Adding \eqref{ND-Step1-2} and \eqref{ND-Step1-3}, then taking the imaginary part, we get \eqref{ND-Step1-1}.\\
\noindent \textbf{Step 2.} The aim of this step is to prove the following identity
\begin{equation}\label{ND-Stpe2-1}
-d\int_{\omega_c}\abs{\la u}^2dx+(d-2)\int_{\omega_c}\abs{\nabla u}^2dx+\int_{\Gamma_0}(m\cdot \nu)\left|\frac{\partial u}{\partial\nu}\right|^2d\Gamma -2\Re\left(i\la \int_{\omega_c}cy\left(m\cdot \nabla{\bar{u}}\right)dx\right)=0.
\end{equation}
For this aim, multiplying \eqref{ND-ST11} by $2(m\cdot\nabla\bar{u})$, integrating over $\omega_c$ and taking the real part, we get 
\begin{equation}\label{ND-Stpe2-2}
2\Re\left(\la^2\int_{\omega_c}u(m\cdot \nabla\bar{u})dx\right)+2\Re\left(\int_{\omega_c}\Delta u(m\cdot \nabla\bar{u})dx\right)-2\Re\left(i\la\int_{\omega_c}cy(m\cdot\nabla\bar{u})dx\right)=0. 
\end{equation}
Now, using the fact that $u=0$ in $\partial\omega_c$, we get 
\begin{equation}\label{ND-Stpe2-3}
\Re\left(2\la^2\int_{\omega_c}u(m\cdot\nabla\bar{u})dx\right)=-d\int_{\omega_c}\abs{\la u}^2dx.  
\end{equation}
Using Green's formula, we obtain 
\begin{equation}\label{ND-Stpe2-4}
\begin{array}{ll}
\displaystyle
2\Re\left(\int_{\omega_c}\Delta u(m\cdot \nabla\bar{u})dx\right)=\displaystyle
-2\Re\left(\int_{\omega_c}\nabla u\cdot\nabla\left(m\cdot\nabla\bar{u}\right)dx\right)+2\Re\left(\int_{\Gamma_0}\frac{\partial u}{\partial\nu}\left(m\cdot\nabla\bar{u}\right)d\Gamma\right)\\[0.1in]
\hspace{3.85cm}=\displaystyle
(d-2)\int_{\omega_c}\abs{\nabla u}^2dx-\int_{\partial\omega_c}(m\cdot \nu)\abs{\nabla u}^2dx+2\Re\left(\int_{\Gamma_0}\frac{\partial u}{\partial\nu}\left(m\cdot\nabla\bar{u}\right)d\Gamma\right).
\end{array}
\end{equation}
Using \eqref{ND-ST13} and \eqref{ND-ST15}, we get 
\begin{equation}\label{ND-Stpe2-5}
\int_{\partial\omega_c}(m\cdot \nu)\abs{\nabla u}^2dx=\int_{\Gamma_0}(m\cdot\nu)\left|\frac{\partial u}{\partial\nu}\right|^2d\Gamma\ \ \text{and}\ \  \Re\left(\int_{\Gamma_0}\frac{\partial u}{\partial\nu}\left(m\cdot\nabla\bar{u}\right)d\Gamma\right)=\int_{\Gamma_0}(m\cdot\nu)\left|\frac{\partial u}{\partial\nu}\right|^2d\Gamma. 
\end{equation}
Inserting \eqref{ND-Stpe2-5} in \eqref{ND-Stpe2-4}, we get 
\begin{equation}\label{ND-Stpe2-6}
2\Re\left(\int_{\omega_c}\Delta u(m\cdot \nabla\bar{u})dx\right)=(d-2)\int_{\omega_c}\abs{\nabla u}^2dx+\int_{\Gamma_0}(m\cdot\nu)\left|\frac{\partial u}{\partial\nu}\right|^2d\Gamma. 
\end{equation}
Inserting \eqref{ND-Stpe2-3} and \eqref{ND-Stpe2-6} in \eqref{ND-Stpe2-2}, we get \eqref{ND-Stpe2-1}. \\\linebreak
\noindent \textbf{Step 3}. In this step, we prove \eqref{ker}.  Multiplying \eqref{ND-ST11} by $(d-1)\overline{u}$, integrating over $\omega_c$ and using \eqref{ND-ST13}, we get 
\begin{equation}\label{ND-Stpe2-7}
(d-1)\int_{\omega_c}\abs{\la u}^2dx+(1-d)\int_{\omega_c}\abs{\nabla u}^2dx-\Re\left(i\la (d-1)\int_{\omega_c}cy\bar{u}dx\right)=0.
\end{equation}
 Adding \eqref{ND-Stpe2-1} and \eqref{ND-Stpe2-7}, we get 
\begin{equation*}
\int_{\omega_c}\abs{\la u}^2dx+\int_{\omega_c}\abs{\nabla u}^2dx=\int_{\Gamma_0}(m\cdot \nu)\left|\frac{\partial u}{\partial\nu}\right|^2d\Gamma-2\Re\left(i\la \int_{\omega_c}cy\left(m\cdot \nabla{\bar{u}}\right)dx\right)-\Re\left(i\la (d-1)\int_{\omega_c}cy\bar{u}dx\right)=0.
\end{equation*}
Using \eqref{Geometric Condition}, we get 
\begin{equation}\label{ND-Stpe2-8}
\int_{\omega_c}\abs{\la u}^2dx+\int_{\omega_c}\abs{\nabla u}^2dx\leq 2\abs{\la}\int_{\omega_c}\abs{c}\abs{y}\abs{m\cdot \nabla u}dx+\abs{\la}(d-1)\int_{\omega_c}\abs{c}\abs{y}\abs{u}dx.
\end{equation}
Using Young's inequality and \eqref{ND-Step1-1}, we get 
\begin{equation}\label{ND-Stpe2-9}
2\abs{\la}\int_{\omega_c}\abs{c}\abs{y}\abs{m\cdot \nabla u}dx\leq \|m\|_{\infty}\|c\|_{\infty}\int_{\omega_c}\left(\abs{\la u}^2+\abs{\nabla u}^2\right)dx
\end{equation}
and 
\begin{equation}\label{ND-Stpe2-10}
\abs{\la}(d-1)\int_{\omega_c}\abs{c(x)}\abs{y}\abs{u}dx\leq \frac{(d-1)\|c\|_{\infty}}{2}\int_{\omega_c}\abs{\la u}^2dx+\frac{(d-1)\|c\|_{\infty}C_{p,\omega_c}}{2}\int_{\omega_c}\abs{\nabla u}^2dx.
\end{equation}
Inserting \eqref{ND-Stpe2-10} in \eqref{ND-Stpe2-8}, we get 
\begin{equation*}
\left(1-\|c\|_{\infty}\left(\|m\|_{\infty}+\frac{d-1}{2}\right)\right)\int_{\omega_c}\abs{\la u}^2dx+\left(1-\|c\|_{\infty}\left(\|m\|_{\infty}+\frac{(d-1)C_{p,\omega_c}}{2}\right)\right)\int_{\omega_c}\abs{\nabla u}^2dx\leq 0.
\end{equation*}
Using \eqref{GC-Condition} and \eqref{ND-Step1-1} in the above estimation, we get 
\begin{equation}\label{ND-Stpe2-11}
u=0\quad \text{and}\quad y=0\quad \text{in}\quad \omega_c.
\end{equation}
In order to complete this proof, we need to show that $y=0$ in $\widetilde{\Omega}$. For this aim, using the definition of the function $c$ in $\widetilde{\Omega}$ and using the fact that $y=0$ in $\omega_c$, we get 
\begin{equation}\label{ND-Stpe2-12}
\begin{array}{rll}
\displaystyle \la^2y+\Delta y&=&0\ \ \text{in}\ \ \widetilde{\Omega},\\[0.1in]
\displaystyle y&=&0 \ \ \text{on}\ \ \partial\widetilde{\Omega},\\[0.1in]
\displaystyle \frac{\partial y}{\partial \nu}&=&0\ \ \text{on}\ \ \Gamma_1.
\end{array}
\end{equation}
Now, using Holmgren uniqueness theorem, we obtain $y=0$ in $\widetilde{\Omega}$ and consequently \eqref{ker} holds true. Moreover, similar to Lemma 2.5 in \cite{akil2021ndimensional}, we can prove $R(i\la I-\AA_d)=\HH, \ \forall \la \in \R$. Finally, by using the closed graph theorem of Banach and Theorem \ref{App-Theorem-A.2}, we conclude the proof of this Theorem.
\end{proof}\\\linebreak

\noindent  Let us notice that, under the sole assumptions  \eqref{Geometric Condition}   
and \eqref{GC-Condition}, the polynomial stability   of system \eqref{ND-1}-\eqref{ND-5}
is an open problem.


\appendix
\section{Some notions and stability theorems}\label{p2-appendix}
\noindent In order to make this paper more self-contained, we recall in this short appendix some notions and stability results used in this work. 
\begin{definition}
\label{App-Definition-A.1}{\rm
Assume that $A$ is the generator of $C_0-$semigroup of contractions $\left(e^{tA}\right)_{t\geq0}$ on a Hilbert space $H$. The $C_0-$semigroup $\left(e^{tA}\right)_{t\geq0}$ is said to be 
		\begin{enumerate}
			\item[$(1)$] Strongly stable if 
			$$
			\lim_{t\to +\infty} \|e^{tA}x_0\|_H=0,\quad \forall\, x_0\in H.
			$$
			\item[$(2)$] Exponentially (or uniformly) stable if there exists two positive constants $M$ and $\varepsilon$ such that 
			$$
			\|e^{tA}x_0\|_{H}\leq Me^{-\varepsilon t}\|x_0\|_{H},\quad \forall\, t>0,\ \forall\, x_0\in H.
			$$
			\item[$(3)$] Polynomially stable if there exists two positive constants $C$ and $\alpha$ such that 
			$$
			\|e^{tA}x_0\|_{H}\leq Ct^{-\alpha}\|A x_0\|_{H},\quad \forall\, t>0,\ \forall\, x_0\in D(A).
			$$
			\xqed{$\square$}
	\end{enumerate}}
\end{definition}
\noindent To show  the strong stability of the $C_0$-semigroup $\left(e^{tA}\right)_{t\geq0}$ we rely on the following result due to Arendt-Batty \cite{Arendt01}. 
\begin{theoreme}\label{App-Theorem-A.2}{\rm
		{Assume that $A$ is the generator of a C$_0-$semigroup of contractions $\left(e^{tA}\right)_{t\geq0}$  on a Hilbert space $H$. If $A$ has no pure imaginary eigenvalues and  $\sigma\left(A\right)\cap i\mathbb{R}$ is countable,
			where $\sigma\left(A\right)$ denotes the spectrum of $A$, then the $C_0$-semigroup $\left(e^{tA}\right)_{t\geq0}$  is strongly stable.}\xqed{$\square$}}
\end{theoreme}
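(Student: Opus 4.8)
This is the contraction-semigroup, Hilbert-space case of the Arendt--Batty--Lyubich--Vũ theorem, and the plan is to reproduce that proof, letting dissipativity and reflexivity trivialise two of its three ingredients. Fix $x\in H$ and set $u(t)=e^{tA}x$ for $t\ge0$. Contractivity gives $\|u(t)\|\le\|x\|$ and $\|u(t+h)-u(t)\|=\|e^{tA}(e^{hA}x-x)\|\le\|e^{hA}x-x\|\to0$ as $h\to0$, uniformly in $t\ge0$, so $u\in\mathrm{BUC}(\R_+,H)$. By Definition \ref{App-Definition-A.1} it suffices to show $\|u(t)\|\to0$ for every $x\in H$.

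\emph{Countable spectrum of the orbit}. Since $\|e^{tA}\|\le1$ we have $\sigma(A)\subset\{\Re\lambda\le0\}$, and for $\Re\lambda>0$ the Laplace transform of $u$ equals the resolvent, $\widehat u(\lambda)=\int_0^\infty e^{-\lambda t}e^{tA}x\,dt=(\lambda I-A)^{-1}x$. The resolvent is holomorphic on the open set $\rho(A)$, which contains $\{\Re\lambda>0\}$ together with all of $i\R$ except the countable set $i\Sigma$, where $\Sigma:=\{\beta\in\R:i\beta\in\sigma(A)\}$. Hence $\widehat u$ continues holomorphically past $i\beta$ for every $\beta\notin\Sigma$, so the half-line (Carleman) spectrum of $u$ is contained in the countable set $\Sigma$.

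\emph{Ergodicity with vanishing means, then the Tauberian step}. For $\beta\in\R$ the family $S_\beta(t):=e^{-i\beta t}e^{tA}$ is again a bounded $C_0$-semigroup, with generator $A-i\beta I$. Since $H$ is reflexive the mean ergodic theorem applies, yielding the splitting $H=\ker(A-i\beta I)\oplus\overline{\operatorname{Ran}(A-i\beta I)}$ and
\[
\frac{1}{T}\int_0^T e^{-i\beta t}u(t)\,dt=\frac{1}{T}\int_0^T S_\beta(t)x\,dt\ \xrightarrow[\,T\to\infty\,]{}\ P_\beta x,
\]
$P_\beta$ being the projection onto $\ker(A-i\beta I)$. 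By hypothesis $A$ has no pure imaginary eigenvalue, so $\ker(A-i\beta I)=\{0\}$, hence $P_\beta=0$ and every modulated Cesàro mean of $u$ vanishes; moreover $\operatorname{Ran}(A-i\beta I)$ is then dense for every real $\beta$, so $A^*$ has no pure imaginary eigenvalue either (the second half of the classical Arendt--Batty hypothesis). Thus $u$ is a $\mathrm{BUC}$ function with countable Carleman spectrum which is totally ergodic with vanishing means, and the Tauberian ``countable spectrum'' theorem of Arendt--Batty \cite{Arendt01} --- applicable since a Hilbert space contains no isomorphic copy of $c_0$ --- forces $u$ to be asymptotically almost periodic with almost-periodic part of Fourier--Bohr spectrum $\{\beta\in\R:M_\beta u\neq0\}=\emptyset$. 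That part therefore vanishes, $\|u(t)\|\to0$, and $(e^{tA})_{t\ge0}$ is strongly stable.

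The two ergodic/spectral reductions above use only boundedness, reflexivity and the absence of eigenvalues, so the real difficulty sits entirely in the Arendt--Batty Tauberian theorem quoted at the last step: that a bounded, uniformly continuous, totally ergodic function with countable Carleman spectrum must belong to $\mathrm{AAP}(\R_+,H)$ with the indicated spectrum. The usual proof runs a transfinite induction over the closed countable set $\mathrm{sp}(u)$ --- which by Baire's category theorem always has an isolated point --- peeling off one frequency at a time via the ergodic projections, and combines Loomis' theorem (a scalar $\mathrm{BUC}$ function with countable spectrum is almost periodic) with the $c_0$-freeness of $H$ to dispose of the remainder. That Tauberian core is precisely what the paper imports from \cite{Arendt01}; the rest is bookkeeping adapted to the dissipative, Hilbert-space setting of \eqref{eq-2.9} and \eqref{ND-evolution}.
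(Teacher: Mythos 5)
This appendix theorem is quoted in the paper directly from Arendt--Batty \cite{Arendt01} without proof, and your argument ultimately rests on the same source: the reductions you carry out (boundedness and uniform continuity of the orbit, identification of its Laplace transform with the resolvent so that the Carleman spectrum lies in the countable set $\sigma(A)\cap i\R$, the mean ergodic theorem in the reflexive space $H$ showing all modulated Ces\`aro means vanish and that $A^{*}$ has no purely imaginary eigenvalues) are correct, but the decisive Tauberian countable-spectrum step is invoked from \cite{Arendt01} rather than proved. So the proposal is sound and consistent with the paper's treatment --- essentially the same approach, namely an appeal to Arendt--Batty --- with the standard bookkeeping made explicit.
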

\noindent Concerning the characterization of polynomial stability stability of a $C_0-$semigroup of contraction $\left(e^{tA}\right)_{t\geq 0}$ we rely on the following result due to Borichev and Tomilov \cite{Borichev01} (see also \cite{Batty01} and \cite{RaoLiu01})
\begin{theoreme}\label{bt}
	{\rm
		Assume that $A$ is the generator of a strongly continuous semigroup of contractions $\left(e^{tA}\right)_{t\geq0}$  on $\mathcal{H}$.   If   $ i\mathbb{R}\subset \rho(\mathcal{A})$, then for a fixed $\ell>0$ the following conditions are equivalent
		\begin{equation}\label{h1}
		\limsup_{\la \in \R, \ |\la| \to \infty}\frac{1}{|\la|^\ell}\left\|(i\la I-\AA)^{-1}\right\|_{\mathcal{L}(\mathcal{H})}<\infty,
		\end{equation}
		\begin{equation}\label{h2}
		\|e^{t\mathcal{A}}U_{0}\|^2_{\HH} \leq \frac{C}{t^{\frac{2}{\ell}}}\|U_0\|^2_{D(\AA)},\hspace{0.1cm}\forall t>0,\hspace{0.1cm} U_0\in D(\AA),\hspace{0.1cm} \text{for some}\hspace{0.1cm} C>0.
		\end{equation}\xqed{$\square$}}
\end{theoreme}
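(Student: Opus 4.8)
The plan is to establish the equivalence of \eqref{h1} and \eqref{h2} by proving the two implications separately. The essential feature, and the reason the statement is confined to the Hilbert space $\HH$, is that the sharp equivalence rests on Plancherel's theorem; on general Banach spaces only weaker, logarithmically corrected rates survive. Throughout, the standing hypothesis $i\R\subset\rho(A)$ guarantees that $(i\la I-A)^{-1}$ is a well-defined bounded operator for every $\la\in\R$, so that only its growth as $|\la|\to\infty$ is at stake, while $0\in\rho(A)$ makes the (fractional) inverse powers of $-A$ available.

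The implication \eqref{h2}$\Rightarrow$\eqref{h1} is the more elementary one. Starting from the Laplace representation $(\mu I-A)^{-1}=\int_0^\infty e^{-\mu t}e^{tA}\,dt$, valid for $\Re\mu>0$, I would apply it to vectors lying in a sufficiently high power of $D(A)$, where the decay \eqref{h2} (equivalently $\|e^{tA}(-A)^{-1}\|_{\LL(\HH)}\le C t^{-1/\ell}$) renders the integrand integrable. Splitting the time integral at $t=1$, using boundedness of the semigroup on $(0,1)$ and the polynomial decay on $(1,\infty)$, and then letting $\Re\mu\to0^+$ while invoking $i\R\subset\rho(A)$, one recovers a bound of the form $\|(i\la I-A)^{-1}\|_{\LL(\HH)}\le C|\la|^{\ell}$. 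This direction involves only real-variable estimates and the resolvent identity, and does not use the Hilbert structure.

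The substantial direction is \eqref{h1}$\Rightarrow$\eqref{h2}. By density it suffices to prove $\|e^{tA}(-A)^{-1}\|_{\LL(\HH)}\le C t^{-1/\ell}$. Fix $x\in\HH$ and, for an exponent $\beta>0$ to be chosen, consider the causal function $w(t)=e^{tA}(-A)^{-\beta}x$, whose boundary Fourier transform is $\widehat{w}(\la)=(i\la I-A)^{-1}(-A)^{-\beta}x$. The plan is to apply Plancherel's theorem in $\HH$ to identify $\int_\R\|w(t)\|_\HH^2\,dt$ with a constant multiple of the frequency integral $\int_\R\|(i\la I-A)^{-1}(-A)^{-\beta}x\|_\HH^2\,d\la$, and then to choose $\beta$ large enough that, combining the resolvent identity with the growth bound $\|(i\la I-A)^{-1}\|_{\LL(\HH)}\le C|\la|^{\ell}$ from \eqref{h1}, the integrand decays fast enough in $\la$ for the integral to converge and be bounded by $C\|x\|_\HH^2$. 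This yields square-integrability in time of $e^{tA}(-A)^{-\beta}x$; the pointwise rate $t^{-1/\ell}$ is then extracted by a moment/interpolation argument on the scale of fractional domains $D((-A)^{\theta})$, trading smoothing order against decay exponent and optimizing, after which squaring produces exactly \eqref{h2}.

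The main obstacle is the hard direction, specifically two points. First, the Fourier-analytic representation of $w$ must be justified rigorously: since $e^{tA}$ is only bounded and not integrable, $\widehat{w}$ is not given by a convergent integral, and one must argue through the half-plane resolvents $(\mu I-A)^{-1}$ with $\Re\mu>0$ and a contour-shifting/limiting procedure onto $i\R$, with \eqref{h1} supplying the uniform control needed to pass to the boundary. Second, the interpolation step that converts $L^2$-in-time integrability into the pointwise exponent $1/\ell$ is delicate and is precisely where the optimality of the Borichev--Tomilov rate is encoded; any inefficiency there degrades the decay. These are the technical cores for which we follow \cite{Borichev01}.
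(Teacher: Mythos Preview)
The paper does not prove this theorem; it is quoted in the appendix as a known result due to Borichev and Tomilov \cite{Borichev01} (see also \cite{Batty01,RaoLiu01}), and the $\square$ after the statement merely marks the end of the statement, not a proof. Your proposal is therefore not comparable to anything in the paper, but it is a faithful outline of the original Borichev--Tomilov argument: the easy direction via the Laplace representation and resolvent identity, and the hard direction via Plancherel on $\HH$ applied to $t\mapsto e^{tA}(-A)^{-\beta}x$, followed by an interpolation/moment step to extract the pointwise rate $t^{-1/\ell}$. The two technical points you flag (boundary passage to $i\R$ in the Fourier representation, and the sharpness of the interpolation step) are exactly where the work lies in \cite{Borichev01}, so deferring to that reference there is appropriate.
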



\end{document}